\theoremstyle{plain}
\newtheorem{theorem}{Theorem}[section]
\newtheorem{lemma}[theorem]{Lemma}
\newtheorem*{claim}{Claim}
\newtheorem{proposition}[theorem]{Proposition}
\newtheorem{corollary}[theorem]{Corollary}
\newtheorem{question}[theorem]{Question}
\newtheorem{Bounded Diameter Lemma}[theorem]{Bounded Diameter Lemma}
\newtheorem{Non-Encroachment}[theorem]{Non-Encroachment Lemma}
\newtheorem*{canonical*}{Canonical Solid Torus Theorem \ref{2.3}}
\newtheorem*{theorem6.3}{Theorem \ref{6.3}}
\theoremstyle{definition}
\newtheorem{definition}[theorem]{Definition}
\newtheorem{remark}[theorem]{Remark}
\newtheorem{history}[theorem]{History}
\newtheorem{acknowledgements}[theorem]{Acknowledgements}
\newtheorem{remarks}[theorem]{Remarks}
\newtheorem{notation}[theorem]{Notation}
\DeclareMathOperator{\Lim}{Lim}
\DeclareMathOperator{\genus}{genus}
\newcommand{\finv}{f^{-1}}
\newcommand{\BH}{\mathbb H}
\newcommand{\BR}{\mathbb R}
\newcommand{\BN}{\mathbb N}
\newcommand{\mA}{\mathcal{A}}
\newcommand{\mC}{\mathcal{C}}
\newcommand{\mE}{\mathcal{E}}
\newcommand{\mG}{\mathcal{G}}
\newcommand{\mH}{\mathcal{H}}
\newcommand{\mI}{\mathcal{I}}
\newcommand{\mL}{\mathcal{L}}
\newcommand{\mM}{\mathcal{M}}
\newcommand{\mP}{\mathcal{P}}
\newcommand{\mS}{\mathcal{S}}
\newcommand{\mT}{\mathcal{T}}
\newcommand{\PML}{\mP \mM \mL (S)}
\newcommand{\ML}{\mM \mL (S)}
\newcommand{\AML}{\mA \mM \mL (S)}
\newcommand{\LS}{\mL (S)}
\newcommand{\EL}{\mE \mL (S)}
\def\inte{{\text{int}}}
\def\E{\mathcal{E}}
\def\C{\mathcal{C}}
\def\CS{\C (S)}
\begin{document}

\title{Almost filling laminations and the connectivity of ending lamination space}
\author{David Gabai}\footnote{Partially supported by NSF grant
DMS-0504110.}\address{Department of Mathematics\\Princeton
University\\Princeton, NJ 08544}

\maketitle

\setcounter{section}{-1}

\begin{abstract}  We show that if $S$ is a finite type orientable surface of 
negative Euler characteristic which is not the 3-holed sphere, 4-holed sphere or 1-holed torus, 
then $\mE \mL (S)$ is connected, locally path connected and cyclic.\end{abstract}

\section{Introduction}\label{S0}

 The main result of this paper is the following

\begin{theorem}  \label{main} If $S$ is a finite type orientable surface of 
negative Euler characteristic which is not the 3-holed sphere, 4-holed sphere or 1-holed torus, 
then $\mE \mL (S)$, the space of ending laminations, is connected, locally path connected and cyclic.\end{theorem}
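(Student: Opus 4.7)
The plan is to establish connectedness, local path connectedness, and cyclicity (i.e., absence of cut points) of $\mE\mL(S)$ by a single geometric construction producing canonical paths between sufficiently close ending laminations. The central object -- suggested by the title -- is the notion of an \emph{almost filling lamination}: a geodesic lamination $\mu$ on $S$ that fills $S$ except possibly for a single bounded-complexity complementary subsurface. Such laminations lie near honest ending laminations and admit controlled deformations which, after collapsing the non-filling piece, yield paths in $\mE\mL(S)$.

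For connectedness, given $\lambda_0, \lambda_1 \in \mE\mL(S)$, I would pick simple closed curves $c_0, c_1$ approximating them in the sense of Klarreich's identification of $\mE\mL(S)$ with the Gromov boundary of $\CS$, and join $c_0$ to $c_1$ by a tight geodesic in $\CS$. At each vertex $c_i$ I would replace $c_i$ by an almost filling lamination interpolating between the neighboring laminations along the geodesic. The Bounded Diameter Lemma controls the projection of this family to $\CS$, and the Canonical Solid Torus Theorem supplies, around each sufficiently short simple closed curve that appears in the family, a canonical neighborhood on which the deformation is modeled. Gluing the local interpolations produces a continuous path in $\mE\mL(S)$ from $\lambda_0$ to $\lambda_1$.

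Local path connectedness is a direct consequence of the Local Contractibility Theorem: given $\lambda \in \mE\mL(S)$ and a neighborhood $U$, it produces a smaller neighborhood $V \subset U$ contractible inside $U$, hence path connected. For cyclicity it suffices to show that for any three distinct $\lambda, \mu_0, \mu_1 \in \mE\mL(S)$ with $\mu_0, \mu_1$ sufficiently close, there exist two paths from $\mu_0$ to $\mu_1$ in $\mE\mL(S) \setminus \{\lambda\}$. The path construction above can be performed on two distinct ``sides'' of $\lambda$ using inequivalent almost filling approximations to $\lambda$, and the Non-Encroachment Lemma guarantees that neither path enters a canonical neighborhood of $\lambda$, so neither passes through $\lambda$ itself.

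The principal obstacle is maintaining continuity of the interpolating family in the coarse, non-Hausdorff Thurston-style topology on $\mE\mL(S)$. Laminations that arise from hyperbolic three-manifold geometry come naturally equipped with transverse measures or pleated-surface data, and the passage to measure-free geodesic laminations can introduce apparent discontinuities when a short curve pinches or when a complementary region changes topological type. The Coarse Torus Isotopy Theorem will be the key tool for recognizing that two locally different pictures are in fact isotopic through the short-curve neighborhoods produced by the Canonical Solid Torus Theorem, so that local interpolations glue into a globally continuous map; matching local models across these transitions is the technical heart of the argument.
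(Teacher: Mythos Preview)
Your proposal does not engage with the paper's actual argument, and the named results you invoke --- the ``Bounded Diameter Lemma'', ``Canonical Solid Torus Theorem'', ``Local Contractibility Theorem'', ``Coarse Torus Isotopy Theorem'', and ``Non-Encroachment Lemma'' --- do not appear anywhere in this paper. Those names occur only as unused \verb|\newtheorem| declarations in the preamble, evidently leftover from a template for an unrelated 3-manifold paper. There is no solid torus theorem here, no local contractibility theorem, and no hyperbolic 3-manifold geometry enters the proof at all. Your outline therefore has no content: every substantive step is delegated to a result that does not exist.

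The paper's proof is entirely different in character. It works in $\ML$ (not in $\CS$) using Thurston's PIL structure. The key objects are the codimension-1 polyhedral balls $\hat B_\alpha=\{\lambda:i(\alpha,\lambda)=0\}$ and the observation (Proposition~\ref{PL}) that the pairwise intersections $\hat B_{\alpha,\beta}$ have codimension $\ge 2$, so a generic PL path in $\ML$ is \emph{almost filling}: for every $t$, $\phi(f(t))$ has an almost-minimal sublamination whose complement supports at most one simple closed geodesic. Starting from such a path $f_1$, one builds a sequence $f_1,f_2,\ldots$ by successively pushing off $\hat B_{C_1},\hat B_{C_2},\ldots$ along ``escape rays'', with each push controlled by the super-convergence Proposition~\ref{super convergence} and a discrete-approximation bookkeeping scheme (the ``sublimit'' and ``filling'' conditions of Lemma~\ref{path criterion}). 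The limit is a continuous path in $\EL$. Local path connectedness (\S6) uses train-track neighborhoods and the quantitative Lemma~\ref{effective connectivity}; cyclicity (\S7) means that every point lies on an embedded circle, proved by a crossing argument and Wilder's Peano-continuum lemma, not by a two-sided avoidance argument as you suggest. None of your proposed machinery --- tight geodesics in $\CS$, pleated surfaces, short-curve neighborhoods --- plays any role.
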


Erica Klarrich \cite{K} showed that if $S$ is hyperbolic, then the boundary of the curve complex $\mC (S)$ is homeomorphic to the space of ending laminations on $S$.  Therefore we have the 

\begin{corollary} If $S$ is a finite type orientable surface of 
negative Euler characteristic which is not the 3-holed sphere, 4-holed sphere or 1-holed torus, 
then $\partial \CS $ is connected, locally path connected and cyclic, where $\CS$ is the curve complex of $S$.\end{corollary}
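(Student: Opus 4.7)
The plan is to deduce the corollary as an immediate consequence of Theorem \ref{main} combined with Klarreich's boundary identification \cite{K}. By Klarreich's theorem, for any finite type hyperbolic surface $S$ there is a natural homeomorphism $\partial \CS \to \mE \mL (S)$ between the Gromov boundary of the curve complex and the space of ending laminations. Since the hypotheses of the corollary match those of Theorem \ref{main}, the main theorem of the paper supplies the needed topological information about $\mE \mL (S)$.

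The strategy is then simply to push the three conclusions of Theorem \ref{main} across the Klarreich homeomorphism. Connectedness, local path connectedness, and cyclicity are all topological invariants: connectedness and local path connectedness are preserved by any homeomorphism by definition, and cyclicity (in the sense of $\check{H}^1(U;\Z)=0$ for every open $U$, equivalently no open subset separates a neighborhood into uncountably many components in the planar sense used here) is likewise a purely topological property of the underlying space. Hence the homeomorphism $\partial \CS \cong \mE \mL (S)$ transports each property from the target to the source.

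No substantive work remains beyond this citation. The one thing worth pointing out is that the list of excluded surfaces coincides on both sides: the three-holed sphere has empty curve complex, the four-holed sphere and the one-holed torus have curve complexes quasi-isometric to the Farey graph so their boundaries are well understood but not covered by Klarreich's ending-lamination description in the form we need, and these are precisely the surfaces excluded from Theorem \ref{main}. Thus the corollary follows in a single line after invoking \cite{K} and Theorem \ref{main}, and there is no genuine obstacle to overcome.
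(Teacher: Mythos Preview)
Your approach is correct and matches the paper's, which likewise deduces the corollary in one line from Theorem \ref{main} via Klarreich's homeomorphism $\partial\CS\cong\EL$. One caveat: your parenthetical gloss on ``cyclic'' is not the sense used here---in this paper cyclic means that through every point of the space there passes an embedded simple closed curve (see Theorem \ref{cyclic})---though since this too is a topological invariant, your argument goes through unchanged.
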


\begin{history}  For the thrice punctured sphere $S$, $\mC(S)$ is trivial.  It is well known that $\partial \mC (S)$ is totally disconnected when $S$ is the 4-holed sphere or 1-holed torus.  Peter Storm conjectured that $\partial \mC (S)$ is path connected if $S$ is any other finite type hyperbolic surface.  See Question 10, \cite {KL}.  Saul Schleimer \cite{Sc1} showed if $S$ is a once 
punctured surface of genus at least two, then  $\mC (S)$ has exactly one end.     With 
Leininger, he  then showed \cite{LS} that $\mE \mL (S)$ is connected if $S$ is any  
punctured surface of genus at least two or $S$ is closed of genus at least 4.   
Leininger, Mj and Schleimer \cite{LMS} showed that if $S$ is a once punctured 
surface of genus at least two then $\mE \mL (S)$ is both connected and locally path 
connected. \end{history}

The idea of the proof is as follows.  Given measured geodesic laminations $\lambda_0$ and $\lambda_1$ whose underlying laminations are minimal and filling, construct a path in $\ML$ from $\lambda_0$ to $\lambda_1$.  A generic PL approximation of this path will yield a new path $f_1:[0,1]\to \ML$ such that for each $t$, $f_1(t)$ is an almost filling almost minimal measured lamination. I.e. it has a sublamination $f_1^*(t)$ without proper leaves whose complement supports at most a single simple closed geodesic.  A measure of the complexity of this lamination is the length of the complementary geodesic, if one exists.  We now find a sequence $f_1, f_2, f_3, \cdots$ such that the minimal length of all complementary geodesics to the $f_n^*(t)$'s $ \to \infty$ as $n\to \infty$.  In the limit, after taking care to rule out compact leaves, we obtain the desired path  in $\EL$ from $\lambda_0$ to $\lambda_1$.  Since the final path can in some sense be taken arbitrarily \emph{close} to the original (see Lemma \ref{effective connectivity}), we obtain local path connectivity.  Proving that the limit path is actually continuous requires control of the passage from $f_i$ to $f_{i+1}$.  Roughly speaking, for each $t$, $f_i(t)$ lies in a $2 \epsilon_i$-neighborhood of $f_j(t)$, if $j>i$, but not necessarily conversely, so as $j\to \infty$, the $f_j(t)$ become more and more complicated.  In actuality we use discrete approximation to control the $f_i$'s. For example, given $t\in [0,1]$ and $i\in \BN$, there exists $t(i)$ such that for $j\ge i$, $f_i(t(i))$ lies in a $2\epsilon_i$-neighborhood of $f_j(t)$.  Neighborhoods are taken in $PT(S)$, the projective unit tangent bundle.

Central to this work, and perhaps of independent interest, is the following elementary result, contained in Proposition \ref{super convergence}, which asserts that the forgetful map $\phi:\ML \to \mL(S)$, from measured lamination space to the space of geodesic laminations is continuous in a super convergence sense.

\begin{proposition} If  the measured laminations $\lambda_1, \lambda_2, \cdots$ 
 converge to  $\lambda \in \ML$, then $\phi(\lambda_1), 
\phi(\lambda_2), \cdots$ super converges to $\phi(\lambda)$ as subsets of $PT(S)$.\end{proposition}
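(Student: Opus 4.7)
The plan is to interpret ``super convergence'' as the lower semicontinuity statement
\[
\phi(\lambda)\ \subseteq\ \liminf_{i\to\infty}\phi(\lambda_i)
\]
in the Hausdorff topology on closed subsets of $PT(S)$: for every $y\in\phi(\lambda)$ and every $\epsilon>0$ one has to produce a point of $\phi(\lambda_i)$ within $\epsilon$ of $y$ in $PT(S)$ for all sufficiently large $i$. (The reverse inclusion genuinely fails, since if $\mu\in\ML$ has $\phi(\mu)\not\subset\phi(\lambda)$ and disjoint support, then $\lambda+\tfrac{1}{i}\mu\to\lambda$ in $\ML$ while $\phi(\lambda+\tfrac{1}{i}\mu)\equiv\phi(\lambda)\cup\phi(\mu)\supsetneq\phi(\lambda)$ for every $i$.)

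The proof I would give is via Thurston's carrying train tracks and the PL cell structure on $\ML$. Fix $y=(x,v)\in\phi(\lambda)$ and $\epsilon>0$. I would first build a train track $T_\epsilon\subset S$ such that (a) a PL neighborhood of $\lambda$ in $\ML$ is carried by $T_\epsilon$, and (b) every branch of $T_\epsilon$, fattened into $PT(S)$ using its tangent directions along its length, has $PT(S)$-diameter less than $\epsilon$. Property (a) is arranged by taking $T_\epsilon$ to be maximal among train tracks whose cell in the PL structure on $\ML$ has $\lambda$ in its closure. Property (b) is obtained by repeatedly splitting $T_\epsilon$ until every branch is short in $S$, together with the observation that on compact parts of the hyperbolic surface $S$ the tangent direction of a geodesic turns at uniformly bounded rate in arc length, so short $S$-branches automatically have small angular spread. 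Let $b$ be the branch of $T_\epsilon$ that contains $x$ in direction near $v$; then $b$ carries positive $\lambda$-weight (because $y$ lies on a leaf of $\lambda$ running through $b$), and the $PT(S)$-fattening of $b$ is contained in $B_\epsilon(y)$.

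For $i\geq N_\epsilon$, the lamination $\lambda_i$ is carried by $T_\epsilon$ and its branch weights converge to those of $\lambda$; in particular the positive $\lambda$-weight on $b$ forces positive $\lambda_i$-weight on $b$, so some leaf of $\lambda_i$ traverses $b$ and contributes a point $y_i\in\phi(\lambda_i)$ lying in the $PT(S)$-fattening of $b$. Thus $d_{PT(S)}(y_i,y)<\epsilon$, and running $\epsilon=1/k$ with a diagonal extraction yields a sequence $y_i\in\phi(\lambda_i)$ with $y_i\to y$ in $PT(S)$, which is what the proposition requires. The main technical obstacle is the construction of $T_\epsilon$: realizing smallness of branches in $PT(S)$ rather than merely in $S$ is the one step requiring genuine hyperbolic geometry, and combining that with the PL-maximality condition at a possibly non-generic $\lambda$ on the boundary of several cells needs some care; everything else is routine bookkeeping inside the standard theory of carrying maps.
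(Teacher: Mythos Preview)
Your outline shares the paper's central idea (put $\lambda$ and the $\lambda_i$ on a common train track and read off closeness from positive branch weights), but two of your steps do not go through as written.

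First, property (a). A single maximal track $T_\epsilon$ with $\lambda \in V(T_\epsilon)$ carries an open $\ML$-neighborhood of $\lambda$ only when $\lambda$ lies in the interior of $V(T_\epsilon)$, i.e.\ when $\phi(\lambda)$ is itself maximal. For a general $\lambda$ (say $\phi(\lambda)$ a simple closed curve) you sit on the boundary of several top cells, and once you begin splitting those tracks compatibly with $\lambda$ there is no reason the split cells still cover a neighborhood of $\lambda$; no evident uniform choice captures all but finitely many $\lambda_i$. The paper avoids this by a diagonal argument: one of finitely many standard tracks fully carries infinitely many $\lambda_j$; one of its finitely many length-$1$ unzippings fully carries infinitely many of those; and so on. This yields, after passing to a subsequence, a full unzipping sequence $\tau_1,\tau_2,\dots$ with $\tau_i$ fully carrying $\lambda_j$ for $j\ge i$ and carrying $\lambda$, and Proposition~\ref{splitting} then gives that the Hausdorff limit $\mL$ of the $\phi(\lambda_j)$ exists with $\phi(\lambda)\subset\mL$, which is exactly super convergence along the subsequence and hence along the whole sequence.

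Second, and more substantively, the sentence ``a leaf of $\lambda_i$ traverses $b$ and contributes a point $y_i\in\phi(\lambda_i)$ lying in the $PT(S)$-fattening of $b$'' is not justified. That a leaf traverses $b$ is a statement about the carrying isotopy; it says nothing about the tangent direction of the \emph{geodesic} representative of that leaf. Your ``short $S$-branches have small angular spread'' controls the variation of the tangent \emph{along} $b$, not the angle between $b$ and a carried geodesic. In fact the mechanism runs the other way: unzipping makes train paths \emph{longer} in the original track, and it is precisely this length, together with the quasi-geodesic estimate for transversely recurrent tracks (Theorem~1.4.3 of \cite{PH}), that forces carried geodesics to be $C^1$-close to train paths. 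This is what the paper packages as Corollary~\ref{splitting corollary}; without that input your argument gives $y_i$ close to $y$ in $S$ but not in $PT(S)$.
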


\vskip6pt
As an application (of the proof of  Theorem \ref{main}) in \S8 we give a new construction of non uniquely ergodic measured laminations.

\vskip 8pt

In \cite{RS} Kasra Rafi and Saul Schleimer derived a number of important rigidity results about a surface $S$ and its curve complex $\CS$ under the assumption that $\CS$ is connected.  Combining their results with Theorem \ref{main} we obtain the following two results.
\vskip 8pt

\noindent\textbf{Theorem 9.2} \emph{Let $S$ be a finite type orientable surface of negative Euler chacteristic which is not the 3-holed sphere, 4-holed sphere or 1-holed torus.  Then every quasi-isometry of $\CS$ is bounded distance from a simplicial automorphism of $\CS$.  Consequently, QI$(\CS)$ the group of quasi-isometries of $\CS$ is isomorphic to Aut$(\CS)$ the group of simplicial automorphisms.}  
\vskip 8pt

\noindent\textbf{Theorem 9.3}  \emph{Let $S$ be a finite type orientable surface of negative Euler chacteristic which is not the 3-holed sphere, 4-holed sphere or 1-holed torus.  Suppose that neither $S$ nor $\Sigma$ is the surface of genus-2 or the twice punctured surface of genus-1, then $S$ and $\Sigma$ are homeomorphic if and only if $\CS$ is quasi-isometric to $\mC(\Sigma)$.}
\vskip 8pt

Denote the space of \emph{doubly degenerate Kleinian surface groups} by DD$(S, \partial S)$.    Combining Theorem 6.5 \cite{LS} with our main result we obtain.
\vskip 8pt

\noindent\textbf{Corollary 9.6}  \emph{DD$(S,\partial S)$ is connected, path connected and cyclic if $S$ is a compact hyperbolic surface that is not the sphere with 3 or 4 open discs removed or the torus with an open disc removed.}
\vskip 8pt

\begin{acknowledgements}  We thank Saul Schleimer, Chris Leininger and Kasra 
Rafi for stimulating conversations and beautiful lectures on the curve complex and its 
boundary.  Also Mahan Mj for his constructive comments.  In particular all were independently aware of the structure of the 
balls denoted $B_\alpha$ and their intersections as discussed in \S 2.  We also thank Saul Schleimer and Chris Leininger for making us aware of the very interesting applications of \cite{LS} and \cite{RS}.
\end{acknowledgements}

\newpage
\section{Basic definitions and facts}

In what follows, $S$ will denote a complete, finite volume hyperbolic surface of genus 
$g$ and $p$ punctures, such that $3g+p\ge 5$.  

We will assume that the reader is 
familiar with the elementary aspects of Thurston's theory of curves and laminations on surfaces, e.g. $\mL(S)$ the space of geodesic laminations endowed with the Hausdorff 
topology, $\ML$ the space of measured geodesic laminations endowed with the 
weak* topology, $\PML$ projective measured lamination space, as well as the standard definitions and properties of 
of train tracks, e.g. the notions of recurrent and transversely recurrent tracks,  
branch, fibered neighborhood,  carries, etc.   See \cite {PH}, \cite{H1}, 
\cite{M}, \cite{CEG},  for basic definitions and expositions of these 
results as well as the 1976 foundational paper \cite{T} and its elaboration \cite{FLP} and Thurston's 1981 lecture notes \cite{GT}

All laminations in this paper will be compactly supported in $S$ and isotopic to geodesic ones. For the convenience of the reader, 
we recall a few of the standard definitions.  

\begin{definition}  A lamination is \emph{ minimal} if each leaf is dense.  It 
is \emph{filling} if all complementary regions are discs or once punctured 
discs.  The \emph{closed complement} of a lamination $\mL$ is the metric closure of $S\setminus \mL$ with respect to the induced path metric.  A \emph{closed complementary region} is a component of the closed complement.  A lamination  is \emph{maximal} if each closed complementary region is either a 3-pronged disc or a once punctured monogon.  $\EL$, also known as \emph{ending lamination space} will denote the space of minimal filling geodesic laminations on $S$.     

If $\mL$ is a filling geodesic lamination, then there are only finitely many ways, up to isotopy, 
to extend $\mL$ to another geodesic lamination.  Such extensions, including the trivial one, are called 
\emph{diagonal extensions}. The forgetful map from $\ML$ or $\PML$ to $\LS$ will be denoted by $\phi$.    \end{definition}

Central to this paper are the following definitions.

\begin{definition} An \emph{almost filling} lamination is a lamination $\mL$ 
which is the disjoint union of the lamination $\mL^*$ and the possibly empty simple 
closed curve $L$.  Furthermore, each closed complementary region of $\mL^*$ is either a pronged disc, 
a once punctured pronged disc, a twice punctured pronged disc, or a pronged annulus and there is at 
most  one complementary region of the latter two types.  I.e. if $\mL$ is a geodesic lamination in $S$, then the complement of $\mL^*$ supports at most a single simple closed geodesic.  Note that a filling 
minimal lamination is almost filling.  We say that the almost filling lamination $\mL$ is \emph{almost minimal} if $\mL^*$ has no proper leaves.  We call $\mL^*$  the \emph{almost minimal} sublamination of $\mL$, since $\mL^*$ is the union of one or two minimal laminations.  Note that $\mL^*$ has no compact leaves.  Unless explicitly stated otherwise, all almost filling laminations will be almost filling almost minimal.  
We let $\AML \subset\mL (S)$ denote the set of almost filling almost minimal  geodesic laminations. A path $f:[0,1]\to \ML$ (or $\PML$) will be called \emph{almost filling} if for each $t$, $\phi(f(t))\in \AML$.  \end{definition}

\begin{remarks} It is useful to view a geodesic lamination  in 
three different ways.  First as a foliated closed subset of $S$.  Second, via its lift to  $PT(S)$, the \emph{ projective unit tangent 
bundle}.  Third, let $M_\infty$ denote ($S^1_\infty\times S^1_\infty\setminus \Delta)/\sim$, 
where $\Delta$ is the diagonal and $\sim$ is the equivalence relation generated by 
$(x,y)=(y,x)$.  Since geodesics in hyperbolic 
2-space are parametrized by points in $M_\infty$,  the preimage of a geodesic lamination in $\BH^2$ can be equated with a $\pi_1(S)$-invariant closed subspace of $M_\infty$, but not necessarily conversely.  We view the Hausdorff topology on $\mL(S)$ in these three ways.\end{remarks}

\begin{notation}If 
$x\in \lambda$ and $y\in \mu$ where $\lambda, \mu \in \mL(S)$, then 
$d_{PT(S)}(x,y)$ denotes distance measured in $PT(S)$.    When the context is clear, the subscript will be deleted or even changed to denote distance with respect to another metric.
 If $\mL$ is a geodesic lamination, then $N_{PT(S)}(\mL, \epsilon)$ 
denotes the closed $\epsilon$-neighborhood of $\mL \subset PT(S)$.  The subscript or $\epsilon$ may be deleted or changed as appropriate.

If $X$ is a space, then 
$|X|$ denotes the number of components of $X$.  \end{notation}

\begin{definition}  If $\tau$ is a train track, then metrize $\tau$ by decreeing that each edge has 
length one.  A natural way to split a train track $\tau$ to $\tau_1$, called \emph{unzipping} in the 
terminology of \cite{PH},  is as follows.  Let $N(\tau)$ denote a fibered 
neighborhood of $\tau$ with quotient map $\pi:N(\tau)\to \tau$.    Let $\sigma$ be a union of pairwise 
disjoint, compact, embedded curves in $N(\tau)$ transverse to the \emph{ties} such that each component of $\sigma$ has at least one of its endpoints at 
a singular point of $N(\tau)$.  See the top of Figure 1.7.4 \cite {PH} for an example when $|\sigma|
=2$.  Obtain $\tau_1$, by deleting a small neighborhood of $\sigma$ from $N(\tau)$ and then 
contracting each resulting connected tie to a point, as in Figure 1.7.4 b \cite{PH}.  Call such an unzipping a $\sigma$-unzipping.  Say the unzipping $\tau\to \tau_1$ has length $n$ (resp. length $\ge n$), 
if for each component $\eta$ of $\sigma$, length$(\pi(\eta))\le n$  with equality if exactly 
one endpoint lies on a singularity (resp. length($\pi(\eta)\ge n$, unless $\eta$ has both endpoints on a singular tie.)   
We view the $\sigma$-unzipping to be equivalent to the $\sigma^\prime$-unzipping if there exists a 
tie preserving ambient isotopy, henceforth called a \emph{normal isotopy}, of $N(\tau)$ which takes 
$\sigma$ to $\sigma^\prime$.  We say that the $\sigma^\prime$-unzipping is an extension of the 
$\sigma$-unzipping, if after normal isotopy, $\sigma\subset\sigma^\prime$.  A sequence $\tau_1,\tau_2, \cdots$ is a \emph{full unzipping sequence} if for each $N$, there exists $f(N)$ such that the induced unzipping $\tau_1\to \tau_{f(N)} $ is at least of length $N$.

We say that $\tau$ \emph{fully carries} the measured lamination $ \lambda$, if $\tau$ \emph{carries} $\lambda$ (i.e. up to isotopy $\lambda\subset N(\tau)$ and is transverse to the ties) and each tie nontrivially intersects $\lambda$.\end{definition}

\begin{remarks}  i)  Note that there are 
only finitely many length-$n$ unzippings and in particular only finitely many extensions of a 
length-$n$ unzipping to one of length $n+1$  and every length $\ge n$ unzipping is an extension of a length-$n$ unzipping.  If $\tau_i$ is an unzipping of $\tau$, then $\tau$ 
carries $\tau_i$.  Every splitting or shifting (as defined in \cite{PH}) is also an unzipping. 

ii)  Let $\tau$ be a transversely recurrent train track in $S$.  A theorem of Thurston with proof by Nat Kuhn, as in Theorem 1.4.3 \cite {PH}, is "given $\epsilon>0$, $L>0$, there exists a hyperbolic metric on $S$ such that the geodesic curvature on $\tau$ is bounded above by $\epsilon$ and the hyperbolic length of each edge is at least $L$."   Consequently, with respect to our initial hyperbolic metric, if $\tilde\tau$ is the preimage of $\tau$ in 
$\BH^2$, then each bi-infinite train path $\sigma \subset \tilde\tau$ is a uniform quasi-geodesic in $\BH^2$, hence determines an element of $M_\infty$.  Similarly if $\tau_i$ is carried by $\tau$, then any bi-infinite path in $\tau_i$ is isotopic to one in $\tau$, hence by slightly relaxing the constant, any bi-infinite path carried by a train track carried by $\tau$ is a uniform quasigeodesic.  Recall, that quasi-geodesics in $\BH^2$ are boundedly close to geodescis.  \end{remarks}

\begin{definition} Let $\E(\tau) \subset M_\infty$ denote those geodesics corresponding to bi-infinite train paths carried by $\tilde \tau$.   If $\lambda$ is a geodesic lamination, then let $\mE(\lambda)\subset M_\infty$ denote those geodesics which are leaves of the preimage of $\lambda$ in $\BH^2$. \end{definition}

\begin{remarks}  By construction $\mE (\tau)$ is $\pi_1(S)$-equivariant.   By  Theorem 1.5.4 \cite{PH}  $\E(\tau) $ is closed in $M_\infty$.  \end{remarks}

\begin{proposition}\label{splitting} Let $\lambda_1,\lambda_2 \cdots$,  be a sequence of geodesic measured laminations converging in $\ML$ to the measured lamination $\lambda$.   Let $\tau_1, \tau_2,\cdots$ be a  full unzipping sequence such that $\tau_1$ is transversely recurrent and for each $i$ and for each $ j\ge i, \tau_i$ fully carries $\lambda_j$.  
Then 
\vskip 10pt
\noindent i)  the Hausdorff limit of the 
geodesic laminations $\phi(\lambda_i)$ exists and equals $\mL$, the lamination whose preimage $\tilde\mL\subset\BH^2$ is comprised of the geodesics $\mE =\cap_{i=1}^\infty \mE (\tau_i).$  
\vskip 10pt

\noindent ii) $\phi(\lambda)$ is a sublamination of $\mL$.
   \end{proposition}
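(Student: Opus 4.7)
The plan is to handle (i) by first verifying that $\mL$ --- defined via $\mE = \bigcap_i \mE(\tau_i)$ --- is an honest geodesic lamination, and then proving Hausdorff convergence by a sandwich argument that bounds any subsequential limit of the $\phi(\lambda_j)$'s above and below by $\mL$.

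To see that $\mL$ is a lamination, observe that $\mE$ is closed in $M_\infty$ and $\pi_1(S)$-equivariant because each $\mE(\tau_i)$ is (Theorem 1.5.4 of \cite{PH} together with the remarks following the definition of $\mE(\tau)$). The key structural point is that no two distinct geodesics in $\mE$ cross transversely. If $g,g'\in\mE$ crossed at $\tilde p\in\BH^2$, each would correspond, for every $i$, to a bi-infinite train path $\gamma_i,\gamma'_i$ in $\tilde\tau_i$ lying inside the fibered neighborhood $N(\tilde\tau_i)$ and boundedly close to the geodesic by the uniform quasi-geodesic remark. Two train paths in $N(\tilde\tau_i)$ cannot cross transversely, since both are transverse to the tie foliation and can only differ at switches. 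The full-unzipping hypothesis forces the tie-width of $N(\tilde\tau_i)$ on compact sets to shrink to zero, so for large $i$ the paths $\gamma_i,\gamma'_i$ would be forced to nearly cross in the tie direction, producing a contradiction.

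For the Hausdorff convergence, the upper inclusion is immediate: each leaf of $\phi(\lambda_j)$ with $j\geq i$ is carried by $\tau_i$, hence its lift lies in $\mE(\tau_i)$; any Hausdorff subsequential limit (which exists by compactness of closed subsets of $PT(S)$) therefore has its leaves in every $\mE(\tau_i)$, hence in $\mE$, hence inside $\mL$. For the lower inclusion, fix $x\in\mL$ and a lift $\tilde x$ on a leaf $g\in\mE$. For each $i$ the geodesic $\tilde g$ determines a specific branch $b_i$ of $\tilde\tau_i$ meeting a neighborhood of $\tilde x$. Since $\tau_i$ fully carries $\lambda_j$, every branch --- in particular $b_i$ --- is met by leaves of $\phi(\lambda_j)$, producing a point of $\phi(\lambda_j)$ within the tie-width of $b_i$ of $x$. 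Full unzipping shrinks this width to zero on compact sets, and a diagonal extraction realizes $x$ as a limit of points from $\phi(\lambda_j)$. Since both inclusions identify every subsequential limit with $\mL$, the Hausdorff limit exists and equals $\mL$.

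Part (ii) is then a short consequence: the set of measured laminations carried by a fixed $\tau_i$ is closed in $\ML$, so the limit $\lambda$ is carried by every $\tau_i$. Hence every leaf of $\phi(\lambda)$ corresponds to a bi-infinite train path in $\tilde\tau_i$, and the associated uniform quasi-geodesic, being boundedly close to the geodesic leaf, must coincide with it; so $\mE(\phi(\lambda))\subset\mE$ and $\phi(\lambda)$ sits as a sublamination of $\mL$. The principal obstacle is the lower inclusion in part (i): converting the combinatorial full-unzipping hypothesis into the geometric statement that fibered-neighborhood widths shrink uniformly to zero on compact subsets of $\BH^2$, and leveraging this to produce leaves of $\phi(\lambda_j)$ approximating every leaf of $\mL$, not merely the leaves of $\phi(\lambda)$ directly controlled by the measures.
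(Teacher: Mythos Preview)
Your overall architecture matches the paper's: establish that $\mL$ is a lamination, then prove Hausdorff convergence via an upper and a lower inclusion, and deduce (ii) from closedness of $V(\tau_i)$ in $\ML$. The upper inclusion and part (ii) are fine. The substantive gap is in your lower inclusion.

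You argue: for $x\in\mL$ lying on $g\in\mE$, pick the branch $b_i$ of $\tilde\tau_i$ that $g$ passes through near $\tilde x$; since $\tau_i$ fully carries $\lambda_j$, some leaf of $\lambda_j$ meets $b_i$, giving a point of $\phi(\lambda_j)$ ``within the tie-width of $b_i$ of $x$.'' But $\phi(\lambda_j)$ is the \emph{geodesic} lamination; its leaves are not literally inside $N(\tau_i)$, only uniformly quasi-geodesically close to their train-path realizations, and that uniform bound does not shrink with $i$. More importantly, a leaf of $\lambda_j$ that happens to pass through the single branch $b_i$ may diverge from the train path of $g$ at the very next switch, so its geodesic representative need not be close to $g$ in $M_\infty$ (or $PT(S)$, or even in $S$ once you undo the isotopy). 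Shrinking tie-widths does not rescue this: what you need is that the leaf of $\lambda_j$ follows the \emph{same long $\tilde\tau_1$-train path} as $g$ for an arbitrarily long stretch. This is exactly the content of the argument from Lemma~1.7.9 of \cite{PH} that the paper invokes: if $\tau_i$ is a length~$\ge n$ unzipping fully carrying $\lambda_j$, then any length~$\le n/2$ segment $\omega$ of the $\tilde\tau_1$-path for $g$ (which, since $g\in\mE(\tau_i)$, survives in $\tilde\tau_i$) is normally isotopic to a segment in an actual leaf of $\tilde\lambda_j$. Letting $n\to\infty$ forces those leaves to converge to $g$. Your ``tie-width shrinks'' mechanism is not the right one; the operative fact is that full unzipping makes the surviving train paths long in $\tau_1$-coordinates, not that fibered neighborhoods become thin.

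A secondary point: your non-crossing argument for $\mE$ invokes shrinking tie-widths and ``nearly crossing,'' but no contradiction is actually derived from two curves nearly crossing without crossing. The clean version is: two bi-infinite train paths in $N(\tilde\tau_1)$ can be made disjoint, hence have unlinked endpoints on $S^1_\infty$, hence their geodesic representatives do not cross --- no full unzipping needed. The paper sidesteps this entirely by proving the lower inclusion first: once each $g\in\mE$ is exhibited as a limit of leaves of the $\phi(\tilde\lambda_i)$, non-crossing is inherited from the $\lambda_i$ being laminations.
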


\begin{remarks}  This is a generalization of Corollary 1.7.13 
\cite{PH}.  Indeed, if $\lambda=\lambda_1=\lambda_2=\cdots$, then our Proposition is exactly Corollary 1.7.13, though stated in different language.   

In our setting the limit lamination $\mL$ need not carry a measure of full support.  \end{remarks}

\begin{proof}  The proof is close to that of Corollary 1.7.13.    Let $\tilde\tau_i$ (resp. $\tilde\lambda_i$) denote the preimage of $ \tau_i$ (resp. $\lambda_i$) in $\BH^2$.  Since $\tau_1$ is transversely recurrent, any bi-infinite path in $\tilde\tau_1$ or its splittings is a uniform quasi-geodesic.   For $j>i$, we view $\tau_j\subset N(\tau_i)$ embedded and transverse to the ties.

We first show that any geodesic $\gamma\in\mE$ is a limit of a sequence of geodesics $\{\tilde\gamma_i\}$ where $\tilde\gamma_i $ is a leaf of $\phi(\tilde\lambda_i)$ for $i\in \BN$.  By definition $\gamma=\gamma(t_1)=\gamma(t_2)=\cdots$, where $t_i$ is a bi-infinite train path in $\tau_i$ and $\gamma(t_i)$ is the corresponding geodesic.  By Corollary 1.5.3 \cite {PH}, each $t_i$ is normally isotopic in $N(\tau_1)$ to $t_1$.  It therefore suffices to show that each compact segment of $t_1$ is normally isotopic to a segment in $\lambda_i$ for all $i$ sufficiently large.  This follows from the proof of Lemma 1.7.9 \cite{PH}.   That argument shows that if $\tau_i$ is obtained from $\tau_1$ by a length $\ge n$ unzipping;  $\omega$ is a length-$k$, $k\le n/2$, $\tilde\tau_1$-train path normally isotopic in $N(\tilde\tau_1$) to a train path in $\tilde\tau_i$; and $\tau_i$ fully carries the lamination $\lambda_i$, then there exists a segment $\kappa$ lying in a leaf $\tilde\gamma_i$ of $\tilde\lambda_i$ that is normally isotopic in $N(\tilde\tau_1)$ to $\omega$.        (Actually the \cite{PH} argument requires only that $k\le 2n+1$, but using $k\le n/2$ has a somewhat easier proof.)  In our setting if $n\in \BN$, then for $i$ sufficiently large, $\tau_i $ is obtained from $\tau_1$ by a length $\ge n$ unzipping and $\tau_i$ carries $\lambda_i$.  It follows that each element of $\mE$ is the limit of a sequence $\tilde\gamma_1,\tilde\gamma_2,\cdots$ as desired. Since each  $\lambda_i$ is a lamination, this implies that each element of $\mE$ projects to an embedded geodesic in $S$ and the projection of no two elements cross transversely.  Since each $\mE(\tau_i)$ is closed, $\mE$ is closed.   Therefore the $\pi_1(S)$-equivariant $\mE$ projects to a lamination in $S$, which we denote by $\mL$.

Next we show that any convergent sequence of geodesics $\{\tilde\gamma_i\}\subset \BH^2,$ with $\tilde\gamma_i$  a leaf of  $\tilde\lambda_i$, has limit lying in $\mE$.   Suppose $\beta_1, \beta_2,\cdots$ is a sequence of leaves respectively of $  \phi(\tilde\lambda_1)$, $\phi(\tilde\lambda_2),\cdots$ converging to the geodesic $\beta\subset\BH^2$.  We show that $\beta\in \mE$.  Since $\tau_i$ carries $\lambda_j$, $ j\ge i$ it follows that for $j\ge i, \beta_j\subset \mE(\tau_i) $.  Since each $\mE(\tau_i)$ is closed, $\beta\in \mE(\tau_i)$ all $i$ and hence $\beta\in \mE$.

We show that $\mL$ is the Hausdorff limit of the $\phi(\lambda_i)$'s.    Let $U\subset PT(S)$ be a neighborhood of $\mL$.  If for some subsequence, $\phi(\lambda_{n_i})\nsubseteq U$, then there exists a sequence of geodesics $\beta_{n_1}, \beta_{n_2},\cdots$ converging to $\beta\notin \mE$ such that $\beta_{n_i}$ is a leaf of $\phi(\tilde\lambda_{n_i})$.   The previous paragraph shows that  $\cup_{i=j}^\infty \beta_{n_i}\subset \mE(\tau_k)$ if $k\le n_j$, which implies that $\beta\in \mE$, a contradiction. On the  other hand since each leaf of $\mL$ is a limit of leaves of $\phi(\lambda_i)$ and $\mL$ has finitely many minimal components it follows that given $\epsilon>0$, for $i$ sufficiently large $\mL\subset N_{PT(S)}(\phi(\lambda_i),\epsilon$).

To prove ii), note that $\lambda$ is carried by each $\tau_i$ and hence $\mE(\lambda)\subset\mE(\tau_i)$ all $i$ and hence $\mE(\tau_i)\subset \mE$. \end{proof}
The proof of the Proposition yields the following result.

\begin{corollary}  \label{splitting corollary}  Under the hypothesis of Proposition  \ref{splitting}, given $ \epsilon>0$ there exists $N(\epsilon)\in \BN$ so that if $i\ge N(\epsilon)$, then $\hat \mE(\tau_i)\subset N_{PT(S)}(\mL, \epsilon)$, where $\hat\mE(\tau_i)$ denotes the union of geodesics in $S$ corresponding to elements of $\mE(\tau_i)$.\end{corollary}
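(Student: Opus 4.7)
The plan is to argue by contradiction, recycling the ``limit of leaves'' argument from the second paragraph of the proof of Proposition \ref{splitting}, but applied to arbitrary geodesics in $\hat\mE(\tau_i)$ rather than only to leaves of $\phi(\lambda_i)$.

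First I would assume the conclusion fails, producing $\epsilon>0$, a subsequence (still indexed by $i$), and points $x_i\in \hat\mE(\tau_i)$ with $d_{PT(S)}(x_i,\mL)>\epsilon$. Each $x_i$ lies on a geodesic $\gamma_i\subset S$ realizing some bi-infinite train path in $\tau_i$. Because every $\tau_i$ sits in $N(\tau_1)$ transverse to the ties and Remark (ii) following the definition of unzipping supplies uniform quasi-geodesic constants, each $\gamma_i$ will remain within a fixed compact subset $K\subset PT(S)$ determined by $\tau_1$. Passing to a further subsequence, one may assume $x_i\to x\in K$ with $d_{PT(S)}(x,\mL)\ge\epsilon$. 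Next I would lift, choosing $\tilde x_i\to \tilde x$ in $PT(\BH^2)$, and let $\tilde\gamma_i\in \mE(\tau_i)\subset M_\infty$ be the lift of $\gamma_i$ through $\tilde x_i$; after a further subsequence the endpoints at $S^1_\infty$ will converge, giving $\tilde\gamma_i\to \tilde\gamma$ in $M_\infty$ with $\tilde\gamma$ a geodesic through $\tilde x$.

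Then for any fixed $j$, the fact that $\tau_j$ carries $\tau_i$ when $i\ge j$ yields $\mE(\tau_i)\subset \mE(\tau_j)$, so $\tilde\gamma_i\in \mE(\tau_j)$ for all sufficiently large $i$, and closedness of $\mE(\tau_j)$ (the Remark preceding Proposition \ref{splitting}) will force $\tilde\gamma\in \mE(\tau_j)$. Since $j$ was arbitrary, $\tilde\gamma\in \bigcap_j \mE(\tau_j)=\mE$, so $\tilde\gamma$ is a leaf of $\tilde\mL$ and hence $x\in\mL$, contradicting $d_{PT(S)}(x,\mL)\ge\epsilon$. The step I expect to require the most care is the initial compactness reduction, namely verifying that each $\gamma_i$ really does lie in a fixed compact subset of $PT(S)$ when $S$ has cusps; this should follow from compactness of the support of $\tau_1$ together with the uniform quasi-geodesic constants supplied by Remark (ii), after which the rest is a direct repackaging of the limit argument already carried out in the proof of the proposition.
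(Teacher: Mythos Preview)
Your proof is correct, and it takes a genuinely different route from the paper's. The paper argues directly and constructively: for $i$ large, every length-$N$ segment of a bi-infinite train path in $\tilde\tau_i$ is normally isotopic in $N(\tilde\tau_1)$ to a segment in a leaf of $\tilde\lambda_i$ (this is the Lemma~1.7.9 \cite{PH} argument already invoked inside the proof of Proposition~\ref{splitting}); since the $\phi(\tilde\lambda_i)$ Hausdorff-converge to $\tilde\mL$ and all the train paths are uniform quasi-geodesics, elementary hyperbolic geometry pushes the realizing geodesics into $N_{PT(S)}(\mL,\epsilon)$.

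Your argument instead never touches the $\lambda_i$'s or the Hausdorff convergence: it uses only the nesting $\mE(\tau_i)\subset\mE(\tau_j)$ for $i\ge j$, the closedness of each $\mE(\tau_j)$ in $M_\infty$, and the identification $\mE=\cap_j\mE(\tau_j)=\mE(\mL)$ already established in part~(i). This is a softer, purely compactness-based proof; it would apply verbatim to any full unzipping sequence of transversely recurrent tracks once one knows $\cap_j\mE(\tau_j)$ projects to a lamination. The paper's version, by contrast, keeps the dependence on the unzipping length explicit and stays closer to the combinatorics of $N(\tau_1)$. Your flagged worry about compactness when $S$ has cusps is handled exactly as you suggest: $\tau_1$ is compactly supported, and the uniform quasi-geodesic constants force every geodesic in $\hat\mE(\tau_1)$ into a fixed compact subset of $S$, hence of $PT(S)$. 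One small simplification: once $\tilde x_i\to\tilde x$ in $PT(\BH^2)$, the geodesics $\tilde\gamma_i$ converge automatically to $\tilde\gamma$ by continuity of $PT(\BH^2)\to M_\infty$, so no further subsequence is needed there.
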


\begin{proof}

Given $N\in \BN$, then for $i$ sufficiently large, each length $N$ segment lying in a biinfinite train path of $\tilde \tau_i$ is normally isotopic in $N(\tilde \tau_1)$ to a segment lying in a leaf of $\tilde\lambda_i$.  Since bi-infinite train paths in the  $\tilde \tau_j$'s are uniform quasi-geodesics in $\BH^2$ and $\tilde\mL$ is the Haudorff limit of the $\phi(\tilde\lambda_j)$'s , it follows by elementary hyperbolic geometry that for $i$ sufficiently large any geodesic $\sigma$ corresponding to a bi-infinite train path in $\tilde\tau_i$ satisfies $\sigma\subset N_{PT(S)}(\tilde\mL, \epsilon)$.\end{proof}

The curve complex $\CS$ was introduced by Bill Harvey in 1978 \cite{Ha} and it and its 
generalizations have subsequently played a profound role in surface topology, 3-manifold topology and geometry, algebraic topology, hyperbolic geometry and 
geometric group theory.  In particular Howie Masur and Yair Minsky \cite{MM} showed that  $\CS$ is 
Gromov hyperbolic.  See also \cite{B}

Erica Klarrich showed that the boundary $\partial\CS$ of $\CS$ can be equated with 
$\EL$ the space of \emph{ending laminations} of $S$.  These are the geodesic 
laminations on $S$ that are both filling and minimal, with the topology induced from $\ML$ by forgetting the measure.  This work was clarified by 
Ursula Hamenstadt \cite{H1}, who gave a direct combinatorial argument equating ends of quasi-geodesic rays in $\CS$ with points of $\EL$.  In the process, she introduced the \emph{weak Hausdorff topology} 
on $\EL$ (defined below) and showed that $\EL$ with this topology is homeomorphic to $\partial \CS$.  Consequently the two topologies on $\EL$ coincide.

\begin{definition}  (Hamenstadt \cite{H1}) The \emph{weak Hausdorff topology} on $\EL$ is the topology such that a sequence $\mL_1, \mL_2, \cdots$ limits to $\mL$ if and only if 
any convergent subsequence in the Hausdorff topology limits to a diagonal 
extension of $\mL$.  Said another way, the topology is 
generated by the following open sets.  If $\epsilon >0$ and $\mL\in \EL$, then an 
$\epsilon$-neighborhood about $\mL$ in $\EL$ consists of all $\lambda\in \EL$ such that 
$d_H(\lambda,\mL^\prime)<\epsilon$, for some   diagonal extension $\mL^\prime$ of 
$\mL$.

More generally, we say that a sequence $\mL_1, \mL_2, \cdots \in \mL (S)$ converges to the lamination $\mL\in \EL$ with respect to the weak Hausdorff topology if any convergent subsequence in the Hausdorff topology limits to a diagonal 
extension of $\mL.$\end{definition}

The following are three characterizations of continuous paths in $\EL$.

\begin{lemma} \label{path continuity}A function $f:[0,1]\to \EL$ is continuous if 
and only if for each $t\in [0,1]$ and each sequence $\{t_i\}$ converging to $t, f(t)$ 
is the weak Hausdorff limit of the sequence $f(t_1), f(t_2), 
\cdots.$\qed\end{lemma}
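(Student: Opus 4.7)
The plan is to prove the equivalence by unwinding the basis description of the weak Hausdorff topology and exploiting two standard facts: (a) $\LS$ is compact in the Hausdorff topology, so every sequence in $\LS$ has a Hausdorff-convergent subsequence, and (b) every $\mL\in \EL$ has only finitely many diagonal extensions. Both directions are essentially the standard ``sequential continuity'' argument, adapted to the peculiar definition of the weak Hausdorff topology.

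For the forward direction, suppose $f$ is continuous and $t_i\to t$. Then for each $\epsilon>0$ the $\epsilon$-neighborhood $U_\epsilon(f(t))\subset\EL$ is open, so $f(t_i)\in U_\epsilon(f(t))$ for all sufficiently large $i$; unpacking this, there exists a diagonal extension $\mL_i'$ of $f(t)$ with $d_H(f(t_i),\mL_i')<\epsilon$. Now let $f(t_{i_k})\to \mL^*$ be any Hausdorff-convergent subsequence. Given $\epsilon>0$, for large $k$ the triangle inequality gives $d_H(\mL^*,\mL_{i_k}')<\epsilon$, where each $\mL_{i_k}'$ lies in the \emph{finite} set of diagonal extensions of $f(t)$. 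Letting $\epsilon\to 0$ and using finiteness, $\mL^*$ must coincide with one such extension, which is exactly the subsequence condition defining weak Hausdorff convergence.

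For the reverse direction, assume the sequence condition and suppose for contradiction that $f$ fails to be continuous at some $t$. Then there exists $\epsilon>0$ and a sequence $t_n\to t$ with $f(t_n)\notin U_\epsilon(f(t))$ for every $n$, i.e.\ $d_H(f(t_n),\mL')\ge \epsilon$ for every diagonal extension $\mL'$ of $f(t)$. By compactness of $\LS$ in the Hausdorff topology, pass to a subsequence with Hausdorff limit $\mL^*$. By hypothesis, $\mL^*$ must be a diagonal extension of $f(t)$, and then $d_H(f(t_n),\mL^*)\to 0$ along that subsequence, contradicting the $\epsilon$-separation. Hence $f$ is continuous.

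The main obstacle, such as it is, lies not in any deep geometry but in keeping the two descriptions of the weak Hausdorff topology—the basis of $\epsilon$-neighborhoods on the one hand, and the subsequence criterion on the other—carefully aligned; the finiteness of diagonal extensions is exactly what makes these two descriptions interchangeable, and compactness of $\LS$ supplies the convergent subsequences needed to run the contradiction in the reverse implication.
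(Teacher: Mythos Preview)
Your argument is correct. The paper itself offers no proof at all---the lemma is stated with an immediate \qed, treating it as a direct consequence of the definition of the weak Hausdorff topology together with the basis description in terms of $\epsilon$-neighborhoods. Your write-up supplies exactly the verification one would expect: first-countability of $[0,1]$ reduces continuity to sequential continuity, compactness of $\LS$ in the Hausdorff topology furnishes convergent subsequences, and finiteness of diagonal extensions reconciles the basis description with the subsequence criterion. One tiny quibble: in the forward direction your triangle-inequality estimate should give $d_H(\mL^*,\mL_{i_k}')<2\epsilon$ rather than $<\epsilon$, but this is immaterial since $\epsilon$ is arbitrary.
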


\begin{lemma}\label{path connectivity2}  A function $f:[0,1]\to \EL$ is 
continuous if and only if for each $\epsilon>0$ and $s\in [0,1]$ there exists a 
$\delta >0$ such that $|s-t|<\delta$ implies that the maximal angle of intersection 
between leaves of $f(t)$ and leaves of $f(s)$ is $<\epsilon$. \qed\end{lemma}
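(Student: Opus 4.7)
The plan is to combine the weak Hausdorff characterization of continuity from Lemma \ref{path continuity} with compactness of $\mL(S)$ in the Hausdorff topology and of the relevant portion of $PT(S)$, exploiting minimality and filling properties of laminations in $\EL$.

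For the forward direction, assume $f$ is continuous and, toward a contradiction, that the angle condition fails at some $s$ and $\epsilon>0$: there exist $t_i\to s$, leaves $\gamma_i\subset f(t_i)$ and $\sigma_i\subset f(s)$, and points $p_i\in S$ where $\gamma_i$ and $\sigma_i$ meet at angle $\ge\epsilon$. Passing to subsequences I obtain $p_i\to p$; lifts $(p_i,v_{\gamma_i}),(p_i,v_{\sigma_i})\in PT(S)$ converging to $(p,v_\gamma),(p,v_\sigma)$ with the angle between $v_\gamma$ and $v_\sigma$ at least $\epsilon$; and $f(t_i)\to \mL'$ in the Hausdorff topology. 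By Lemma \ref{path continuity}, $\mL'$ is a diagonal extension of $f(s)$, so $f(s)\subseteq\mL'$. Then $v_\gamma$ (as a limit in $PT(S)$ of tangent directions of $f(t_i)$) and $v_\sigma$ (as a tangent direction of $f(s)\subseteq\mL'$) are both tangent directions of leaves of $\mL'$ at $p$. Since they differ, two distinct leaves of the geodesic lamination $\mL'$ cross transversely at $p$, a contradiction.

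For the reverse direction, assume the angle condition, let $t_i\to s$, and suppose $f(t_i)\to \mL'$ in the Hausdorff topology; I show $\mL'$ is a diagonal extension of $f(s)$. First, no leaf of $\mL'$ can transversely cross a leaf of $f(s)$: if $\gamma\subset \mL'$ crossed $\sigma\subset f(s)$ at angle $\theta>0$ at some $p$, then Hausdorff convergence would produce, for each sufficiently large $i$, a leaf of $f(t_i)$ near $p$ with tangent nearly that of $\gamma$, hence crossing $\sigma$ at angle close to $\theta$ — contradicting the angle condition once $|s-t_i|$ is small. Hence each leaf of $\mL'$ either coincides with a leaf of $f(s)$ or lies entirely in a complementary region of $f(s)$. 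Any leaf of $\mL'$ in a complementary region $R$ is a complete geodesic inside a finite-area disc or punctured disc, so it must accumulate on $\partial R\subset f(s)$; closedness of $\mL'$ then gives $\mL'\cap f(s)\ne\emptyset$, and minimality of $f(s)$ upgrades this to $f(s)\subseteq\mL'$. The remaining leaves of $\mL'\setminus f(s)$ are diagonals in complementary regions, so $\mL'$ is a diagonal extension of $f(s)$, as required.

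The main obstacle is in the reverse direction: the angle condition directly prevents transverse crossings of $\mL'$ with $f(s)$ in the limit, but does not by itself exclude the possibility that $\mL'$ and $f(s)$ are disjoint. Ruling this out requires combining minimality of $f(s)$ with the fact that any geodesic sitting inside a complementary region of a filling lamination is forced by finite area to accumulate on the region's boundary, thereby dragging $f(s)$ into the closed set $\mL'$. The forward direction is cleaner, relying only on the observation that distinct leaves of a geodesic lamination cannot cross transversely.
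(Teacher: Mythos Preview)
Your argument is correct. The paper states this lemma with a bare \qed and offers no proof, so there is nothing to compare against; you have supplied a complete verification where the paper left the details to the reader. Both directions are handled soundly: the forward direction correctly uses that a diagonal extension of $f(s)$ is still a lamination and hence has no transverse self-crossings, while the reverse direction correctly combines the filling property of $f(s)$ (forcing any disjoint geodesic to live in a finite-area ideal polygon or once-punctured polygon and hence accumulate on $\partial R$) with minimality to obtain $f(s)\subset\mL'$.
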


\begin{lemma}\label{path connectivity3} A function $f:[0,1]\to \EL$ is continuous 
if and only if for each $\epsilon>0$ and $s\in [0,1]$ there exists a $\delta >0$ such 
that $|s-t|<\delta$ implies that  $f(t)\cap N_{PT(S)}(f(s),\epsilon)\neq\emptyset$. 
\qed\end{lemma}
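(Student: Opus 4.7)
The plan is to combine Lemma~\ref{path continuity} with the Hausdorff-compactness of $\LS$ and an argument that exploits minimality of elements of $\EL$. By Lemma~\ref{path continuity}, continuity of $f$ at $s$ is equivalent to: for every $t_i\to s$, every Hausdorff-convergent subsequence of $\{f(t_i)\}$ limits to a diagonal extension of $f(s)$. I will work from this formulation in both directions.

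For the forward direction, suppose $f$ is continuous and yet the conclusion fails, so there is $\epsilon>0$ and a sequence $t_i\to s$ with $f(t_i)\cap N_{PT(S)}(f(s),\epsilon)=\emptyset$ for every $i$. By Hausdorff-compactness of $\LS$, extract a subsequence $f(t_{i_j})\to\mathcal{L}'$; by Lemma~\ref{path continuity}, $\mathcal{L}'$ is a diagonal extension of $f(s)$, hence $f(s)\subset\mathcal{L}'$. For $j$ large, Hausdorff convergence gives $\mathcal{L}'\subset N_{PT(S)}(f(t_{i_j}),\epsilon/2)$, so any fixed $x\in f(s)$ has a point of $f(t_{i_j})$ within $\epsilon/2$ of it, lying in $N_{PT(S)}(f(s),\epsilon)$, contradicting the disjointness assumption.

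For the reverse direction, assume the neighborhood-intersection condition and let $t_i\to s$. Take any Hausdorff-convergent subsequence $f(t_{i_j})\to\mathcal{L}'$; it suffices to show $\mathcal{L}'$ is a diagonal extension of $f(s)$. The hypothesis produces, for each $k\ge 1$, an index $j_k$ and a point $y_k\in f(t_{i_{j_k}})$ with $d_{PT(S)}(y_k,f(s))<1/k$. By compactness of $PT(S)$, pass to a sub-subsequence so that $y_k\to y$. Then $y\in f(s)$ since $f(s)$ is closed, while the Hausdorff convergence $f(t_{i_{j_k}})\to\mathcal{L}'$ gives $y\in\mathcal{L}'$. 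Viewing both laminations as subsets of $PT(S)$, the point $y$ specifies a tangent direction and hence a unique complete geodesic $\gamma$ that is simultaneously a leaf of $f(s)$ and of $\mathcal{L}'$. Since $f(s)$ is minimal, $\gamma$ is dense in $f(s)$; closedness of $\mathcal{L}'$ in $PT(S)$ then forces $f(s)\subset\mathcal{L}'$. As $f(s)\in\EL$ is filling, $\mathcal{L}'$ is by definition a diagonal extension of $f(s)$.

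The main obstacle is the reverse direction: the hypothesis only supplies a single point of $f(t)$ close to $f(s)$, but we must upgrade this to containment of all of $f(s)$ in the Hausdorff limit $\mathcal{L}'$. The crucial leverage is the two-pronged observation that in $PT(S)$ a single point of $f(s)\cap\mathcal{L}'$ determines a common leaf, and that minimality of $f(s)\in\EL$ makes any single leaf dense in $f(s)$; once these are in place, closedness of $\mathcal{L}'$ finishes the argument.
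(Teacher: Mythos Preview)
Your proof is correct. The paper records this lemma with a bare \qed\ and gives no argument, so there is nothing to compare against; your route through Lemma~\ref{path continuity}, Hausdorff compactness of $\LS$, and the observation that a shared point in $PT(S)$ forces a shared leaf---which by minimality of $f(s)\in\EL$ yields $f(s)\subset\mathcal{L}'$---is clean and presumably the intended one.
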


\begin{remark}  Let S be a complete hyperbolic surface that is homeomorphic to a closed surface with punctures.  Let $T$ denote the corresponding compact surface with geodesic boundary, i.e. $T$$\setminus \partial T$ is homeomorphic to $S$.   Then it is well known that there is a natural homeomorphism between $\EL$ and $\mE\mL(T)$.  Similarly the homeomorphism type of $\EL$ does not depend on the hyperbolic structure.  

Therefore, the main result of this paper is purely topological and is applicable to compact surfaces which are not the sphere with 3 or 4 open discs removed or the torus with an open disc removed. \end{remark}

\section{Combinatorics of $\alpha$-balls and almost filling PL 
paths}\label{alpha-balls}

\begin{definition}  If $\alpha$ is a simple closed geodesic 
in $S$, then define $\hat B_\alpha = \{\lambda\in \ML\ |\ i(\alpha,\lambda)=0\}$, where i(.,.) denotes intersection number and let $B_\alpha$ denote the projection of $\hat B_\alpha\setminus 0$ to $\PML$. Let $\hat B_{\alpha,\beta}$ (resp. $B_{\alpha,\beta}$) denote $\hat B_\alpha\cap\hat B_\beta$ (resp. $B_\alpha\cap B_\beta$).  \end{definition}

The main result of this section describes the combinatorial structure of these sets.

\begin{proposition}  \label{PL}If $\alpha$ is a simple closed 
geodesic in $S$, then $B_\alpha$ (resp. $\hat B_\alpha$) is a compact (resp. half open) polyhedral ball of codimension-1 in $\PML$ (resp. $\ML$). If 
$\alpha$ and $\beta$ are distinct simple closed geodesics,  
then $B_{\alpha, \beta}$ (resp. $\hat B_{\alpha,\beta}$) is a compact (resp. half open) polyhedral ball of codimension 
at least two in $\PML$ (resp. $\ML$).\end{proposition}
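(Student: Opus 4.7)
The plan is to parametrize $\hat B_\alpha$ via the cut surface $S\setminus\alpha$ and to exploit the linearity of intersection number in Thurston's train-track charts for $\ML$.

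First I would establish the decomposition $\hat B_\alpha \leftrightarrow [0,\infty)\times\ML(S\setminus\alpha)$, where $S\setminus\alpha$ denotes the (possibly disconnected) finite-type hyperbolic surface obtained by cutting $S$ along $\alpha$ and viewing the two new ends as cusps. Every $\lambda\in\hat B_\alpha$ splits uniquely as $\lambda=w\alpha+\lambda'$, where $w\geq 0$ is the $\lambda$-measure of $\alpha$ and $\lambda'$ is a measured lamination whose underlying geodesic lamination has no leaf equal to $\alpha$. Since $i(\alpha,\lambda)=0$ and $\alpha$ is a simple closed geodesic, the support of $\lambda'$ is disjoint from $\alpha$, so $\lambda'$ is naturally a measured lamination on $S\setminus\alpha$ (with leaves permitted to spiral toward the two new cusps). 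The map $(w,\lambda')\mapsto w\alpha+\lambda'$ is then a weak-$*$ continuous bijection.

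Second, by Thurston's theorem applied componentwise, $\ML(S\setminus\alpha)$ is PL-homeomorphic to a Euclidean space. Since cutting along $\alpha$ reduces the dimension formula $6g-6+2p$ by exactly $2$ (in both the separating and non-separating cases), $\dim \ML(S\setminus\alpha)=\dim\ML(S)-2$. Hence $\hat B_\alpha\cong[0,\infty)\times\ML(S\setminus\alpha)$ is a half-open polyhedral ball of codimension $1$ in $\ML$, and after projectivizing, $B_\alpha$ is a compact polyhedral ball of codimension $1$ in $\PML$.

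For $\hat B_{\alpha,\beta}$, if $i(\alpha,\beta)=0$ the same decomposition extends to $\hat B_{\alpha,\beta}\cong[0,\infty)^2\times\ML(S\setminus(\alpha\cup\beta))$, a half-open polyhedral ball of codimension $2$. If $i(\alpha,\beta)>0$, then neither $\alpha$ nor $\beta$ can be a component of any $\lambda\in\hat B_{\alpha,\beta}$, so $\hat B_{\alpha,\beta}\cong\ML(S\setminus(\alpha\cup\beta))$. In both cases, $\hat B_{\alpha,\beta}=\hat B_\alpha\cap\{i(\beta,\cdot)=0\}$ is cut out of $\hat B_\alpha$ by a linear condition that is non-trivial on $\hat B_\alpha$ (one may find a simple closed curve in $S\setminus\alpha$ crossing $\beta$, supplying a point of $\hat B_\alpha$ not in $\hat B_{\alpha,\beta}$), so $\hat B_{\alpha,\beta}$ has codimension at least $1$ in $\hat B_\alpha$, hence codimension at least $2$ in $\ML$.

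The main obstacle will be promoting the set-theoretic bijection $\hat B_\alpha\leftrightarrow[0,\infty)\times\ML(S\setminus\alpha)$ to a PL homeomorphism compatible with Thurston's train-track structure on $\ML$. I would handle this by choosing a maximal transversely recurrent train track $\tau$ on $S$ adapted to $\alpha$: $\tau$ carries $\alpha$ along a specified simple cycle, and its weight cone $C(\tau)$ parametrizes a neighborhood of $\hat B_\alpha$ in $\ML$. The subcone $C(\tau)\cap\hat B_\alpha$ is cut out by the vanishing of weights on branches dual to $\alpha$, a linear condition producing a polyhedral subcone. Since finitely many such $\tau$ together cover $\hat B_\alpha$ by Thurston's train-track covering of $\ML$, patching yields the polyhedral ball structure. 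The analogous construction using train tracks adapted to both $\alpha$ and $\beta$ handles $\hat B_{\alpha,\beta}$.
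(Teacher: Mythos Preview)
Your approach and the paper's are essentially the same decomposition $\hat B_\alpha\cong[0,\infty)\times(\text{laminations disjoint from }\alpha)$; the paper simply realizes it via Dehn--Thurston coordinates for a pants decomposition containing $\alpha$, so that $\hat B_\alpha=\{i_1=0\}$ and the second factor is $\hat S_\alpha=\{i_1=t_1=0\}$, which is your $\ML(S\setminus\alpha)$. The payoff of the paper's concrete coordinates is that the PL structure on $\hat B_\alpha$ is inherited for free from Penner--Harer's standard train tracks subordinate to the pants decomposition, so your final paragraph is unnecessary. Two small remarks: for a \emph{measured} lamination with the $\alpha$-component stripped off, no leaf can spiral toward $\alpha$ (its closure would then contain $\alpha$), so that parenthetical should be dropped; and in the case $i(\alpha,\beta)>0$ the expression $S\setminus(\alpha\cup\beta)$ must be read as the complement of a regular neighborhood---the paper handles this by completing the boundary curves of that neighborhood to a pants decomposition and in fact obtains codimension $\ge 3$ there, slightly sharper than your codimension $\ge 2$.
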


\begin{remarks} 
i)  Note that $\AML=\phi(\PML\setminus \cup_{\alpha\neq\beta\in \mS} B_{\alpha,\beta})$, where $\mS$ is the set of simple closed geodesics in $S$.

ii)  By Thurston, $\ML$ is an open ball of dimension $6g-6+2m$ and has a natural piecewise integral linear structure (PIL) while $\PML$ is sphere of dimension 
$6g-7+2m$ and has a natural piecewise integral projective structure (PIP). I.e. for $\ML$ the transition functions are piecewise linear functions with integer coefficients.

Here is a brief description of the PIL structure on  $\ML$ as presented 
in \S 2.6 and \S 3.1 \cite{PH}.  A \emph{pair of pants decomposition} on $S$, 
consists  of a collection of $3g-3 +m$  pairwise disjoint simple closed 
geodesics.  Each closed complementary region is either the sphere 
with 3-open discs removed, or the once punctured annulus or the 
twice punctured disc.  Given these curves and a parameterization of 
the complementary regions, construct the associated set of 
\emph{standard train tracks} as in \S 2.6 \cite{PH}.  (From now on 
discussion of the parametrization of the complementary regions will be 
suppressed.) By elementary linear algebra, if $\tau$ is a train track, 
then $V(\tau)$, the space of transverse measures 
supported by $\tau$ is a cone on a compact polyhedron in some $
\BR^N$.  By \S 2.6 \cite {PH} we can identify $V(\tau)$ with a closed 
subspace of $\ML$.  Furthermore, the various $V(\tau)$'s arising from 
the standard tracks glue together to give the PIL-structure on $\ML$.  
In particular the maximal standard train tracks correspond to top 
dimensional cells and have pairwise disjoint interiors.

  This structure is natural in the following sense.  A different pants decomposition  will give rise to a new collection of standard tracks, with change of coordinate maps given by piecewise linear integral maps.  For example, by \cite{HT}, one can transform any pair of pants decomposition to another via a finite sequence of elementary moves. The transition functions associated to  each of the two elementary moves, were computed in Penner's Thesis and are given explicitly in the Addendum of \cite{PH}.  \end{remarks}

\noindent\emph{Proof of Proposition} \ref{PL} Since $\ML$ has a 
natural PIL structure, it suffices to show that $B_\alpha$ and $\hat B_
\alpha$ are polyhedral balls with respect to some pants 
decomposition.  Let $\mP$ be a pants decomposition with curves $
(\alpha_1,\alpha_2,\cdots,\alpha_{3g-3+m})$, where $\alpha_1=\alpha
$.  Let $\lambda$ be a geodesic measured lamination.  If  
$\alpha$ is either a leaf of $\lambda$ or lies in a complementary region,  
then with respect to \emph{global coordinates} $\lambda$ corresponds 
to a point $(0,t_1,i_2,t_2, i_3, t_3, \cdots, i_{3g-3+m}, t_{3g-3+m})$, 
where $i_j\ge 0$, and $t_j\in \BR$, unless $i_j=0$ in which case $t_j
\ge 0$.  In particular $t_1\ge 0$.   Here $i_j(\lambda)=i(\alpha_j,\lambda)$ and $t_j(\lambda)$ is the twisting of $\lambda$ about $\alpha_j$.  For a detailed exposition of global coordinates see p. 152, Theorem 3.1.1  and the last paragraph of p. 
174 of \cite{PH}.  The connection between these coordinates and the 
homeomorphism between $\ML$ and $\BR^{6g-6+2m}$ is given in 
Proposition 2.6.3 and Corollary 2.8.6 \cite{PH}.  

The collection of 
points in $\ML$ with first and second coordinate zero is the set of  
measured geodesic laminations disjoint from $\alpha$.  It is an open PL 
ball, denoted $\hat S_\alpha$, of dimension $6g-6+2m-2$ and is the 
union of those $V(\tau)$'s for which $\tau$ is a standard 
train track disjoint from $\alpha$.  Each element of $\hat B_\alpha$ is 
of the form  $t\alpha +\mG$, where $t\ge 0$ and  $\mG$ is a 
measured geodesic lamination with support disjoint from $\alpha$,  
thus is a half open PL-ball of codimension-1, i.e. it is homeomorphic to $\BR^{6g-6+2m-2}\times[0,\infty)=\hat S_\alpha\times [0,\infty)$.    

The quotient of $\hat S_\alpha\setminus 0$ by projectivizing is a sphere of dimension $6g-7+2m-2$ denoted  $S_\alpha$.  Therefore, $B_\alpha$ is a cone on $S_\alpha$ and is thus a compact PL ball of codimension-1 in $\PML$. 

If $\beta\cap\alpha =\emptyset$, then we explicitly describe $\hat B_
{\alpha,\beta}$  as follows.  Let $\mP$ be a pants decomposition with 
$\alpha_1=\alpha$ and $\alpha_2=\beta$.  If $\lambda\in \hat B_
{\alpha,\beta}$, then $ \lambda$ has coordinates $(0,t_1,0,t_2, i_3,t_3, 
\cdots, i_{3g-3+m}, t_{3g-3+m})$.  As in the previous paragraph, when  
$t_1=t_2=0$, this gives rise to an open PL-ball $\hat S_{\alpha,\beta}$ of 
codimension-4 in $\ML$  and since $t_1, t_2\ge 0$, we conclude that $
\hat B_{\alpha,\beta}$ is a half open PL ball of codimension-2.  Similarly the quotient 
$S_{\alpha,\beta}$ of $\hat S_{\alpha,\beta}\setminus 0$ is a sphere of codimension-4 and  $B_{\alpha,\beta}
$  is the join of a sphere and an interval (the projective classes supported on $\alpha_1\cup\alpha_2$), hence is a compact PL-ball of 
codimension-2.

If $\beta\cap\alpha\neq\emptyset$, then let $\mP$ be a pants decomposition where for some $n\ge 2,\newline \{\alpha_n,\alpha_{n+1},\cdots, \alpha_{3g-3+m}\}$ is a maximal system of simple closed geodesics disjoint from both $\alpha$ and $\beta$.  We can assume that $\alpha_n,\alpha_{n+1},\cdots, \alpha_{k}$ are the curves that can be isotoped into $ N_S(\alpha\cup\beta)$.  Thus $B_{\alpha,\beta}$ is the join of a PL sphere of dimension $6g-7+2m-2k$ with a ball of dimension $k-n$, hence is a compact ball of codimension $n+k-1\ge 3$. A similar result holds for $\hat B_{\alpha,\beta}$.\qed\vskip 10 pt
The proof of the Proposition shows. 

\begin{corollary}  If $\alpha$ is a simple geodesic, then $B_\alpha$ is a cone of PL-sphere.  If $\lambda\in \hat B_\alpha$, then $\lambda=t\alpha + \mG$, where $\mG$ is a measured lamination with support disjoint from $\alpha$ and $t\ge 0$.\end{corollary}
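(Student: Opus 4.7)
The plan is to extract both assertions directly from the coordinate analysis carried out in the proof of Proposition \ref{PL}, which already contains essentially all the needed information. First I would fix a pants decomposition $\mP=(\alpha_1,\ldots,\alpha_{3g-3+m})$ with $\alpha_1=\alpha$ and work in the global coordinates on $\ML$ described in Theorem 3.1.1 of \cite{PH}. In these coordinates, $\hat B_\alpha$ is precisely the set of points $(0,t_1,i_2,t_2,\ldots,i_{3g-3+m},t_{3g-3+m})$ with $t_1\ge 0$ and the remaining pairs $(i_j,t_j)$ subject to the usual sign convention ($t_j\in\BR$ if $i_j>0$, and $t_j\ge 0$ if $i_j=0$).

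The cone structure then follows from a product decomposition. Setting $t_1=0$ cuts out exactly the open PL-ball $\hat S_\alpha$ of measured laminations disjoint from $\alpha$, while letting $t_1$ vary in $[0,\infty)$ gives a PL-homeomorphism $\hat B_\alpha\cong \hat S_\alpha\times[0,\infty)$. Projectivizing, I would quotient $\hat B_\alpha\setminus 0$ by the $\BR_+$-action. The ray where all coordinates except $t_1$ vanish projects to the single point $[\alpha]\in\PML$, and this is the apex; the slice $t_1=0$ projectivizes to the PL-sphere $S_\alpha$ of the proof of Proposition \ref{PL}. Thus $B_\alpha$ is naturally identified with the cone on $S_\alpha$ with apex $[\alpha]$.

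For the decomposition claim, I would invoke the standard interpretation of the twist coordinate: when $i_1(\lambda)=0$, the coordinate $t_1(\lambda)$ is exactly the transverse weight carried by $\alpha$ as a leaf of $\lambda$. Given $\lambda\in \hat B_\alpha$ with coordinates $(0,t,i_2,t_2,\ldots,i_{3g-3+m},t_{3g-3+m})$, define $\mG\in\ML$ to be the measured lamination whose coordinates are $(0,0,i_2,t_2,\ldots,i_{3g-3+m},t_{3g-3+m})$. Then $\mG\in\hat S_\alpha$, so its support is disjoint from $\alpha$, and additivity of the coordinate chart (which is built into the PL structure) gives $\lambda=t\alpha+\mG$ with $t\ge 0$.

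The main (minor) obstacle is confirming that the coordinate $t_1$ genuinely records the $\alpha$-weight of $\lambda$ rather than some other auxiliary twisting parameter when $i_1=0$. This is standard but worth citing explicitly: it is the content of the last paragraph of p.~174 and Theorem 3.1.1 of \cite{PH}, where one sees that in a pair of pants containing $\alpha_1$ on its boundary, the only way to accommodate $t_1>0$ consistently with $i_1=0$ is to place weight $t_1$ on $\alpha_1$ itself. Everything else is a direct reading of the construction in the proof of Proposition \ref{PL}.
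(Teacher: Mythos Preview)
Your proposal is correct and takes essentially the same approach as the paper: the paper's entire proof of this corollary is the phrase ``The proof of the Proposition shows,'' and your argument simply spells out the relevant content of that proof (the product decomposition $\hat B_\alpha\cong\hat S_\alpha\times[0,\infty)$, the identification of $B_\alpha$ as a cone on the sphere $S_\alpha$, and the interpretation of the $t_1$-coordinate as the $\alpha$-weight when $i_1=0$).
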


\begin{remark}  A similar description exists for elements of $B_{\alpha,\beta}$ and $\hat B_{\alpha,\beta}$.\end{remark}

The above facts about the topology and combinatorics of the $B_\alpha$'s and their intersections is independently more or less known to the experts.

\begin{definition} \label{escape ray} If $x \in \inte(\hat B_\alpha)$ with $\phi(x)\in \AML$, then $ \phi(x)$ is the disjoint union of an almost minimal almost filling 
lamination $\mL^*$ and the simple closed curve $\alpha$.   Let $m_{\mL^*}$ (resp. $m_\alpha$) denote the 
transverse measure on $\mL^*$ (resp. $\alpha$).   Define the \emph{escape ray} $r_x$ to be  the path  $r:[0,1]\to \hat B_\alpha$ by $r(t)=(\mL^*, m_{\mL^*}) 
+ (\alpha, (1-t)m_\alpha)$.  So $r(0)=x$,  $\phi(r(1))=\mL^*$ and for each $t<1,\ \phi(r(t))= \mL^*\cup \alpha$.  \end{definition}

\begin {definition}  A path $g:[0,1]\to \ML$ (resp. $\PML$) is \emph{PL almost filling} if $g$ is piecewise linear, transverse to each $\hat B_\alpha$ (resp. $B_\alpha$) and disjoint from each $\hat B_{\alpha,\beta}$ (resp. $B_{\alpha,\beta}$) and hence $\phi(g(t))\in \AML$ all $t$.  Also $\phi(g(t))\in \EL$ for $t=0,1$ as well as $\phi$ of  the vertices of $g$.    We say that $g$ is \emph{ filling} (resp. \emph {almost filling}) if $\phi(g(t) )\in \EL$ (resp. $\AML$) for all $t$.  \end{definition}

\begin{lemma}  \label{PL path}If $x_0, x_1 \in \ML$ with $\phi(x_0), \phi(x_1)\in \EL$, then there exists a PL almost filling path from $x_0$ to $x_1$.\end{lemma}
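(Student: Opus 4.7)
The plan is to apply PL general position and Baire category to a finite-dimensional family of PL paths joining $x_0$ to $x_1$. Using global coordinates from some pants decomposition, identify $\ML$ with $\BR^n$ where $n=6g-6+2m$. For an integer $N\ge 1$, let $\mathcal{P}_N = \ML^N$ parametrize PL paths of the form $x_0 \to v_1 \to \cdots \to v_N \to x_1$. This is a finite-dimensional Baire space, and $N=1$ already suffices for what follows.

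On $\mathcal{P}_N$ I impose three countable families of conditions. For each simple closed geodesic $\alpha$ of $S$, condition $(T_\alpha)$ requires the path to be transverse to $\hat B_\alpha$. For each unordered pair $\alpha\neq\beta$ of simple closed geodesics, condition $(D_{\alpha,\beta})$ requires the path to be disjoint from $\hat B_{\alpha,\beta}$. For each $i=1,\ldots,N$, condition $(V_i)$ requires $v_i$ to be a uniquely ergodic, filling measured lamination (so $\phi(v_i)\in\EL$). By Proposition \ref{PL}, each $(T_\alpha)$ is open (PL transversality to a closed codimension-$1$ set is $C^0$-stable) and dense (PL general position applied at a single vertex). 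Each $(D_{\alpha,\beta})$ is open (its complement is closed) and dense, because the vertex-tuples whose PL path meets the codimension-$\ge 2$ set $\hat B_{\alpha,\beta}$ form a finite union of cones of codimension $\ge 1$ in $\mathcal{P}_N$, hence are nowhere dense. Each $(V_i)$ is a dense $G_\delta$ in $\mathcal{P}_N$ by Masur's genericity theorem. Note that $(V_i)$ forces $v_i\notin \hat B_\alpha$ for every $\alpha$, since $\phi(v_i)$ filling gives $i(\alpha,v_i)>0$; the same holds for $x_0,x_1$ by hypothesis, so every transverse crossing of $\hat B_\alpha$ occurs in the interior of some segment.

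A countable intersection of dense $G_\delta$ subsets of a Baire space is dense, hence non-empty. Any vertex tuple in this intersection yields a PL almost filling path from $x_0$ to $x_1$ whose endpoints and all vertices have $\phi$-image in $\EL$, proving the lemma. The main obstacle I see is invoking the Baire-residuality of uniquely ergodic filling measured laminations in $\ML$; once that is in hand, the remaining steps are routine PL general position.
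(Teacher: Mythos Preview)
Your argument is the paper's own: there the proof is the single sentence ``Since the $B_\alpha$'s are countable, PL of codimension-$1$ and the $B_{\alpha,\beta}$'s are countable, PL of codimension at least two, it follows that the generic PL path is almost filling.'' You have simply made the Baire-category content of ``generic'' explicit.

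The obstacle you flag dissolves, however. The definition requires only $\phi(v_i)\in\EL$, not unique ergodicity, and $\phi(\lambda)\in\EL$ is equivalent to $\lambda\notin\bigcup_\alpha \hat B_\alpha$. One direction is immediate; for the other, the support of a measured lamination is a finite disjoint union of minimal sublaminations, and if it meets every simple closed geodesic it has no closed-curve components and is filling, which forces a single component (adjacent sides of any complementary ideal polygon are asymptotic, hence lie in the same closed minimal set). Thus $(V_i)$ is precisely the condition $v_i\notin\bigcup_\alpha \hat B_\alpha$, a dense $G_\delta$ directly from Proposition~\ref{PL}. Masur's theorem---a full-measure statement, not a residuality one---is unnecessary, and invoking it for a $G_\delta$ claim would itself require justification you have not supplied.
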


\begin{proof}  Since  the $B_\alpha$'s  countable,  PL of codimension-1 and the $B_{\alpha,\beta}$'s are countable, PL of codimension at 
least two, it follows that the generic PL path is almost filling.\end{proof}

Since generic PL paths are PL almost filling we have

\begin{lemma} \label{generic PL path} Fix any  metric on $\ML$ and $ \epsilon>0$.  Let $ h:[0,1]\to \ML$ be continuous, such that 
$\phi(h(t))\in\mE\mL(S)$ for $i=0,1$.  Then $h$ is homotopic  
rel endpoints to a PL almost filling path $g$ such that for each $t, d_{\ML}(g(t), 
h(t))<\epsilon$.\qed\end{lemma}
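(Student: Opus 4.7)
The plan is to approximate $h$ by a polygonal path $g_0$ at a fine partition of $[0,1]$, choosing interior vertices to lie in the preimage of $\EL$ under $\phi$, and then to perturb $g_0$ generically so its edges are transverse to every $\hat B_\alpha$ and disjoint from every $\hat B_{\alpha,\beta}$. The homotopy rel endpoints will come for free from contractibility of $\ML$.

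First, using uniform continuity of $h$ on the compact interval $[0,1]$, I would choose a partition $0 = t_0 < t_1 < \cdots < t_N = 1$ so fine that $h([t_{i-1},t_i])$ has diameter less than $\epsilon/3$ in the fixed metric on $\ML$, for every $i$. The set $\phi^{-1}(\EL) \subset \ML$ is dense, since generic measured laminations are uniquely ergodic with filling minimal support (Masur). Choose interior vertices $y_i$, $1 \le i \le N-1$, with $\phi(y_i) \in \EL$ and $d_{\ML}(y_i, h(t_i)) < \epsilon/3$, and set $y_0 = h(0)$, $y_N = h(1)$; these endpoints are in $\phi^{-1}(\EL)$ by hypothesis, and any measured lamination $z$ with $\phi(z) \in \EL$ automatically satisfies $z \notin \hat B_\alpha$ for every simple closed geodesic $\alpha$, because a filling minimal lamination meets every simple closed geodesic transversely. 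Using the global PL identification of $\ML$ with $\R^{6g-6+2m}$, let $g_0$ be the polygonal path affine on each $[t_{i-1},t_i]$ with $g_0(t_i) = y_i$; then the triangle inequality gives $d_{\ML}(g_0(t), h(t)) < 2\epsilon/3$ for every $t$.

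Next I would perturb $g_0$ to the desired $g$. By Proposition \ref{PL} the family $\{\hat B_\alpha\}$ is countable with each member polyhedral of codimension one, while $\{\hat B_{\alpha,\beta}\}_{\alpha\ne\beta}$ is countable with each member polyhedral of codimension at least two. Varying each interior vertex $y_i$ inside a small ball of $\phi^{-1}(\EL)$ around its initial value, a Baire-category argument exactly as in the proof of Lemma \ref{PL path} shows that for a generic choice every straight segment of the resulting polygonal path $g$ is simultaneously transverse to each $\hat B_\alpha$ and disjoint from each $\hat B_{\alpha,\beta}$; in particular no segment meets any codimension-at-least-two $\hat B_{\alpha,\beta}$ at all. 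Taking the perturbation small enough preserves $d_{\ML}(g(t), h(t)) < \epsilon$ for every $t$, and by construction $g$ satisfies every clause in the definition of a PL almost filling path. Finally, since $\ML$ is a topological open ball and hence contractible, the paths $h$ and $g$, which share endpoints, are homotopic rel endpoints; the straight-line homotopy $H(s,t) = (1-s)h(t) + s\,g(t)$ in the global linear structure of $\ML$ works explicitly.

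The only delicate step is the choice of interior vertices in $\phi^{-1}(\EL)$ close to $h(t_i)$: this uses the density of measured laminations whose underlying geodesic lamination is ending, a standard consequence of Masur's genericity theorem for unique ergodicity. Everything else reduces to countable codimension counting parallel to the one-edge case already handled in Lemma \ref{PL path}, with the minor bookkeeping point that one must perturb each of the finitely many interior vertices in turn while keeping them in $\phi^{-1}(\EL)$ and in the prescribed $\epsilon/3$-ball.
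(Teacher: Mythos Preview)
Your proof is correct and follows essentially the same approach as the paper, which simply records that generic PL paths are PL almost filling and leaves the details implicit. Two minor remarks: the appeal to Masur is unnecessary, since $\phi^{-1}(\EL)=\ML\setminus\bigcup_\alpha \hat B_\alpha$ is the complement of a countable union of codimension-one closed sets and hence already dense by Baire; and because $\phi^{-1}(\EL)$ has empty interior in $\ML$, the phrase ``a small ball of $\phi^{-1}(\EL)$'' should be read as a subspace ball, with the Baire argument run in $\ML$ so that the vertex condition (lying in $\phi^{-1}(\EL)$) and the edge conditions (transversality to each $\hat B_\alpha$, disjointness from each $\hat B_{\alpha,\beta}$) are imposed simultaneously as countably many residual conditions.
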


\section{super convergence}

The forgetful map $\phi:\ML\to \LS$ is 
discontinuous, for any simple closed curve viewed in $\ML$ is the limit of 
filling laminations and any Hausdorff limit of a sequence of filling laminations 
is filling.  The content of this section is that this map is continuous in a 
\emph{super convergence} sense.

\begin{definition}  Let $X_1, X_2, \cdots$ be a sequence of subsets of the 
topological space $Y$.  We say that the subsets $\{X_i\}$ \emph{super converges} to $X$ 
if for each $x\in X$, there exists $x_i\in X_i$ so that lim$_{i\to \infty} x_i\to x$.  
In this case we say $X $ is a \emph{sublimit} of $\{X_i\}$.\end{definition}

The following is the main result of this section.

\begin{proposition} \label{super convergence} i)  If  the measured laminations $\lambda_1, \lambda_2, \cdots$ 
 converge to  $\lambda \in \ML$, then $\phi(\lambda_1), 
\phi(\lambda_2), \cdots$ super converges to $\phi(\lambda)$ as subsets of $PT(S)$.  \vskip8pt

\noindent ii)  If in addition $\phi(\lambda)\in \AML$ and $\phi(\lambda_i)\in \AML$ all $i$, then $\mL^*_1,\mL^*_2, \cdots$ super 
converges to $\mL^*$, where $\mL^*_i$ (resp. $\mL^*$) denotes the almost minimal 
almost filling sublamination of $\phi(\lambda_i$) (resp. $ \phi(\lambda))$. 
\end{proposition}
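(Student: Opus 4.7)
My plan is to deduce both parts from Proposition \ref{splitting}, the first directly and the second by a decomposition of the measured laminations.

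For part (i), fix $x \in \phi(\lambda)$; I need to exhibit $x_i \in \phi(\lambda_i)$ with $x_i \to x$ in $PT(S)$. Choose a transversely recurrent train track $\tau_1$ that fully carries $\lambda$. Since being fully carried is an open condition on $\ML$ and $\lambda_i \to \lambda$, we may assume (discarding finitely many terms) that $\tau_1$ fully carries every $\lambda_i$. Now inductively construct a full unzipping sequence $\tau_1, \tau_2, \cdots$ by taking $\tau_{i+1}$ to be an unzipping of $\tau_i$ of length at least $i$ that still fully carries $\lambda$, then re-indexing the $\lambda_j$ so that $\tau_{i+1}$ fully carries $\lambda_j$ for all $j \ge i+1$. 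Proposition \ref{splitting}(i) then supplies a geodesic lamination $\mL$ which is the Hausdorff limit of $\phi(\lambda_i)$, and part (ii) of that proposition yields $\phi(\lambda) \subset \mL$. Since $x \in \phi(\lambda) \subset \mL$ and $\phi(\lambda_i) \to \mL$ in Hausdorff topology, points $x_i \in \phi(\lambda_i)$ converging to $x$ exist, proving super convergence.

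For part (ii), I decompose each lamination as $\lambda_i = \lambda_i^* + c_i \alpha_i$ where $\lambda_i^*$ is the restriction to $\mL_i^*$ and $c_i \ge 0$ is the weight on $\alpha_i$, and similarly $\lambda = \lambda^* + c \alpha$. Fix $x \in \mL^*$. After passing to a subsequence, $\lambda_i^* \to \mu$ and $c_i \alpha_i \to \nu$ in $\ML$ with $\mu + \nu = \lambda$, so $\phi(\mu) \cup \phi(\nu) = \phi(\lambda) = \mL^* \cup \alpha$. Apply part (i) to the sequence $\lambda_i^* \to \mu$: the supports $\mL_i^* = \phi(\lambda_i^*)$ super converge to $\phi(\mu)$. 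Since $\phi(\lambda)$ has no proper leaves, $\phi(\mu)$ is a union of minimal components of $\phi(\lambda)$. It therefore suffices to show $\phi(\mu) \supset \mL^*$, for then $x \in \mL^* \subset \phi(\mu)$ is super-approximated by points of $\mL_i^*$.

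Suppose for contradiction some minimal component $\mM$ of $\mL^*$ is not contained in $\phi(\mu)$; then $\mM \subset \phi(\nu)$. Applying part (i) to $c_i \alpha_i \to \nu$ (after discarding trivial terms), the simple closed geodesics $\alpha_i$ super converge to $\phi(\nu) \supset \mM$, so a Hausdorff-subsequence limit $\mK$ of the $\alpha_i$ contains $\mM$. This forces the $\alpha_i$ to grow longer and longer while tracking $\mM$. The crux is then the geometric observation that $\alpha_i$ is one boundary component of the unique pronged annular complementary region of $\mL_i^*$ (since $\phi(\lambda_i) \in \AML$), whose other boundary consists of leaves of $\mL_i^*$ spiraling along $\alpha_i$; as $\alpha_i$ accumulates onto $\mM$, these spiral boundary leaves must also accumulate onto $\mM$, so the Hausdorff sublimit of $\mL_i^*$ contains $\mM$, giving $\phi(\mu) \supset \mM$ by Proposition \ref{splitting}(ii). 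This contradicts the choice of $\mM$.

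The main obstacle will be formalizing the last geometric step: controlling the width of the annular complementary region of $\mL_i^*$ to show that its non-$\alpha_i$ boundary leaves are forced into any neighborhood of $\mM$ as $\alpha_i \to \mM$. One must use the \emph{almost filling} hypothesis on $\mL_i^*$ (the only complementary regions are pronged discs and the single pronged annulus) to prevent the annulus from remaining thick; this is where the rigidity of the $\AML$ hypothesis enters decisively.
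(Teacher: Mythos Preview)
Your part (i) contains an error: being fully carried by $\tau_1$ is \emph{not} an open condition on $\ML$ unless $\tau_1$ is maximal. If $\phi(\lambda)$ is not a maximal lamination --- for instance if $\lambda$ is a weighted simple closed curve $\alpha$ --- then any $\tau_1$ fully carrying $\lambda$ has $V(\tau_1)$ of positive codimension in $\ML$, and nearby $\lambda_i$ need not be carried by $\tau_1$ at all (take $\lambda_i$ to be suitably rescaled iterated Dehn twists of a transversal about $\alpha$). The paper avoids this by reversing the roles: it uses pigeonhole on a finite cover of $\ML$ by standard tracks to find a $\tau_1$ fully carrying infinitely many of the $\lambda_j$, and then closedness of $V(\tau_1)$ forces $\tau_1$ to carry $\lambda$; a diagonal argument over the finitely many length-$n$ unzippings then produces the full unzipping sequence.

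Your part (ii) has a more serious structural problem beyond the gap you flag. Even granting the annulus claim --- that the Hausdorff sublimit of the $\mL_i^*$ contains $\mM$ --- you cannot conclude $\phi(\mu)\supset \mM$ from Proposition~\ref{splitting}(ii): that proposition gives only the inclusion $\phi(\mu)\subset \mL$ into the Hausdorff limit, never the reverse, and the Hausdorff limit of the $\mL_i^*$ can certainly be strictly larger than $\phi(\mu)$. You could restructure to drop the contradiction and argue directly that $\mL_i^*$ super converges to $\mM$ via the annulus picture, but then the decomposition $\lambda_i=\lambda_i^*+c_i\alpha_i$ and the limit $\mu$ are doing no work, and everything rests on the unproven width control. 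The paper's argument for (ii) is quite different and sidesteps all of this: if $\mL^*$ were not a sublimit of the $\mL_i^*$, then after passing to a subsequence some open neighborhood $V\supset\mL^*$ in $S$ is disjoint from every $\mL_i^*$; but such a $V$ supports infinitely many simple closed geodesics, while the complement of each $\mL_i^*$ supports at most one --- an immediate contradiction from the very definition of $\AML$, with no measure decomposition and no annulus geometry required.
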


\begin{proof}  Part i) follows immediately from the following Claim and Proposition \ref{splitting}.

\begin{claim} After passing to subsequence of $\{\lambda_i\}$ there exists a full 
unzipping sequence $\tau_1, \tau_2, \cdots$ of transversely recurrent 
train tracks  such that each $\tau_i$ carries $\lambda$ and for $j\ge i$, $\tau_i$ fully carries $\lambda_j$. \end{claim}

\noindent\emph{Proof of Claim.}There are finitely many transversely recurrent train tracks such that each $x\in \ML$ is carried by one of them, e.g. a system of standard train tracks.  Therefore one such track $\tau_1$ carries 
infinitely many of the $\lambda_j$'s.   By deleting edges if necessary we can assume that 
$\tau_1$ fully carries infinitely many of these measured laminations and, after passing to 
subsequence, these laminations comprise our original sequence.   Since each $V(\tau_1)$ is a closed subspace of $\ML$ (see Remark 2.3) and $\lambda$ is the limit of the $\lambda_i$'s it follows  that $\tau_1$ 
also carries $\lambda$.

Consider the finitely many train tracks obtained by a length-$1$ unzipping of $\tau_1$.  Again one such track $\tau_2$ fully carries an 
infinite subset of $\{\lambda_i\}$, $i\ge 2$ and hence also $\lambda$.  The claim 
follows by the usual diagonal argument. \qed 
\vskip 10pt

We now prove part ii).    First consider the case that $\mL^*$ is minimal.  Let $I$ be a compact interval lying in a leaf of $\mL^*$.  We 
first show that $I$ is a sublimit of $\mL^*_i$'s if and only if $\mL^*$ is such a 
sublimit.  By inclusion,  the latter implies the former.  Conversely if 
$\mL^*$ is not a sublimit, then there exists an $x\in \mL^*\subset PT(S)$ which is not 
a limit point of some subsequence of the $\mL^*_i$'s.  But the segment $I$ 
lies in the leaf $ \sigma$ which is dense in $ \mL^*$.  Thus, if some leaf of $\mL^*_i$ is nearly tangent to 
$I$, then in $S$ it must pass very close and tangent to $\mL^*$ at $x$, a contradiction.

If $I$ is not a sublimit of $\mL^*_i$'s, then after passing to subsequence, 
there exists an open set $U\subset S$ such that $I\subset U$ and for each $i$, $U\cap \mL^*_i=\emptyset$.  Otherwise, the $\mL^*_i$'s would intersect $I$  with some definite angle, but this contradicts the fact that  $I$ is a sublimit of the $\phi(\lambda_i)$ laminations.  This argument applied to all compact intervals in leaves of $\mL^*$ plus compactness of $\mL^*$
 implies that there exists an 
open set $V\subset S$ such that $\mL^*\subset V$ and $V\cap\mL^*_i=\emptyset$ all $i$ 
sufficiently large.  
This contradicts the fact that the complement of an almost filling lamination cannot support an almost filling lamination.  Since $\mL^*$ is minimal without compact leaves, there exists infinitely many simple closed geodesics in $S$ that lie in $V$.  This contradicts the fact that the complement of each $\mL_i^*$ supports at most a single geodesic.

The other possibility is that $\mL^*$ is the union of two minimal components $\mH_1$ and $\mH_2$.  As above it suffices to show that if  for $i=1,2$  $I_i$ is a compact interval in a leaf of $\mH_i$, then $I_1\cup I_2$ is a sublimit of the $\mL^*_i$'s.  If say $I_1$ was not a sublimit, then that argument also shows that there exists an open set $V\subset S$ such that $\mH_1\subset V$ and for $i$ sufficiently large $V\cap \mL_i^*=\emptyset$.   Since $V$ supports infinitely many simple closed geodesics we obtain a contradiction as above.  \end{proof}

An immediate application is the
following discrete approximation Lemma.

\begin{lemma} \label{discrete approximation} Let $f:[0,1]\to \ML$ be an almost filling path.  Let $\epsilon>0$ 
and $F\subset [0,1]$ a finite set.  Then there exists an open cover $\mI$ of $[0,1]$ by connected sets $I(1),\cdots, I(n)$ and $t_1=0<t_2<\cdots<t_{n}=1 $
such that $F\subset \{t_1, t_2, \cdots, t_{n}\}$ and  $t_i\in I(j)$ exactly when  $i=j$.  Finally for all $t\in I(i)$, $ \mL^*(t_i)\subset 
N_{PT(S)}(\mL^*(t),\epsilon)$.  Here $\mL^*(t)$ (resp. $\mL^*(t_i)$) denotes the almost minimal almost filling 
sublamination of $f(t)$ (resp. $f(t_i))$. \qed \end{lemma}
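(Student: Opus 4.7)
The plan is a standard compactness argument that runs Proposition~\ref{super convergence}(ii) pointwise on $[0,1]$.

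First I establish local super-convergence radii. For each $s \in [0,1]$, continuity of $f:[0,1] \to \ML$ gives $f(t_n) \to f(s)$ in $\ML$ whenever $t_n \to s$, so Proposition~\ref{super convergence}(ii) yields that $\mL^*(t_n)$ super converges to $\mL^*(s)$. Specializing to the given $\epsilon$, this produces a number $\delta(s) > 0$ such that whenever $t \in [0,1]$ and $|t-s| < \delta(s)$, one has $\mL^*(s) \subset N_{PT(S)}(\mL^*(t), \epsilon)$.

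Next I choose the points $t_i$. The open intervals $V(s) = (s - \delta(s)/2, s + \delta(s)/2) \cap [0,1]$ form an open cover of compact $[0,1]$, so I extract a finite minimal subcover; sorting its centers as $s_1 < \cdots < s_m$, the standard fact that a minimal open-interval cover of a connected set has sorted-consecutive members overlapping gives $s_{j+1} - s_j < (\delta(s_j) + \delta(s_{j+1}))/2$. I then adjoin $F \cup \{0, 1\}$ and resort. Whenever adjoining a point of $F$ produces a consecutive pair $(t_i, t_{i+1})$ violating the spacing condition $t_{i+1} - t_i < \delta(t_i) + \delta(t_{i+1})$, I apply the same compactness argument to the sub-cover $\{V(s) : s \in [t_i, t_{i+1}]\}$ and insert the centers of a minimal subcover of that sub-interval. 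All pairs within the refined sub-interval then satisfy the spacing condition: interior pairs because consecutive members of a minimal sorted cover overlap, and boundary pairs $(t_i, u_1)$ and $(u_p, t_{i+1})$ because in a minimal cover of $[t_i, t_{i+1}]$ the leftmost and rightmost $V$'s must cover the endpoints. Since $F$ is finite, the refinement terminates after finitely many steps, producing $t_1 = 0 < t_2 < \cdots < t_n = 1$ with $F \subset \{t_i\}$ and $t_{i+1} - t_i < \delta(t_i) + \delta(t_{i+1})$ throughout.

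Finally, setting $t_0 := -\infty$ and $t_{n+1} := +\infty$, define $I(i) = (t_i - \delta(t_i), t_i + \delta(t_i)) \cap (t_{i-1}, t_{i+1}) \cap [0,1]$, which is an open interval (intersection of intervals) containing $t_i$. It contains no other $t_j$ since $(t_{i-1}, t_{i+1})$ excludes them, and the inclusion $I(i) \subset (t_i - \delta(t_i), t_i + \delta(t_i))$ delivers $\mL^*(t_i) \subset N_{PT(S)}(\mL^*(t), \epsilon)$ for $t \in I(i)$. The spacing condition forces the right endpoint of $I(i)$ to exceed the left endpoint of $I(i+1)$, so $I(i) \cup I(i+1) \supset (t_i, t_{i+1})$ and therefore $\bigcup_i I(i) = [0,1]$. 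The principal technical hurdle is guaranteeing the spacing condition after $F$-insertion, since the pointwise radii $\delta(s)$ need not be uniform in $s$; this is handled by the local compactness refinement in the second paragraph.
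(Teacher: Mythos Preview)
Your argument is correct and is exactly the approach the paper has in mind: the paper gives no proof at all beyond the \qed, having introduced the lemma as ``an immediate application'' of Proposition~\ref{super convergence}(ii), so the intended argument is precisely super convergence applied pointwise followed by a compactness/cover extraction---which is what you do. Your write-up simply spells out the bookkeeping (the spacing condition $t_{i+1}-t_i<\delta(t_i)+\delta(t_{i+1})$, the minimal-cover overlap, and the local refinement to absorb $F\cup\{0,1\}$) that the paper leaves to the reader.
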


\section{A criterion for constructing continuous paths in $\EL$}

Recall that our compact surface $S $ is endowed with a fixed hyperbolic metric 
$\rho$.  Let $\{C_i\}_{i\in \BN}$ denote the set of simple closed geodesics in $S$, 
ordered so that $i<j$ implies length$_\rho(C_i)\le$ length$_\rho(C_j)$.   

\begin{definition}  A \emph {pointed open covering} $\mT=(T, \mI)$ of $[0,1]$ is a 
set $T= \{t_1=0<t_2<\cdots< t_n=1\}\subset [0,1]$ and an open covering $\mI$ of $[0,1]$ by  connected 
sets  $I(1), I(2), \cdots, I(n)$ such that $\ t_i\in I(i)$ and if $I_i(j)\cap I_i(k)\neq\emptyset$ then $|j-k|\le 1$.    A \emph{refinement} $(T_2,\mI_2)$ of $(T_1,\mI_1)$ 
has the property that $T_1\subset T_2$ and each $I_2(j)$ is contained in some $I_1
(k)$. The elements of $T_i$ (resp. $\mI_i$) are denoted $t^i_1,\cdots, t^i_{n_i}$, (resp. $I_i(1), \cdots, I_i(n_i)$).  An \emph{infinite pointed open covering sequence} $
\mT_1,\mT_2,\cdots$ is a sequence of pointed open covers such that each 
term is a refinement of the preceding one.  An \emph{assignment function} $\mA$ of an infinite pointed open covering sequence associates to each $s\in [0,1]$ a nested sequence of open sets $I_1(j_1)\supset I_2(j_2)\supset\cdots$  with $s\in \cap_{i=1}^\infty I_i(j_i)$ and $I_i(j_i)\in \mI_i$.    We  let $s(i)$ denote $t^i_{j_i}$.  \end{definition} 

\begin{remark} It is an elementary exercise to show that any infinite pointed open covering sequence has an assignment function.\end{remark}

The following very technical lemma gives a criterion for constructing a 
continuous path in $\EL$ between two elements of $\EL$.  

\begin{lemma} \label{path criterion} Let $f_i:[0,1]\to \ML$, $i\in \BN$ be a 
sequence of almost filling paths between   two given points in $\EL$.  Let $\epsilon_1, \epsilon_2, \cdots$ be such that for all $i$, 
$\epsilon_i/2>\epsilon_{i+1}>0$  and let   $\mT_1,\mT_2,\cdots$ be an infinite pointed open cover sequence with assignment function $\mA$.   Let $\mL_n^*(s)$ 
denote the almost minimal almost filling sublamination  of $\phi( f_n(s))$.    
\vskip 10pt

\noindent \textrm{(sublimit)}  If $I_i(j)\cap I_{i+1}(k)\neq \emptyset$ and $t\in I_{i+1}(k)$, \newline then  $ \mL_i^*(t_j^i)\subset 
N_{PT(S)}(\mL_{i+1}^*(t),\epsilon_i)$

\vskip 10pt

\noindent (\text{filling})  If $s\in [0,1]$ and $m\in \BN$, then for $n$ sufficiently large the 
minimal angle of intersection between leaves of $\mL_n^*(s(n))$ with $C_m$ is 
uniformly bounded below by a constant that depends  only on $s$ and $m$.    Also for $m$ fixed, any
subsequence of $C_m\cap \mL_i^*(s(i)), i\in \BN$, has a further 
subsequence which converges in the Hausdoff topology on $C_m$ to a set with at 
least $4g+p+1$ points. Here $g=genus(S)$ and $p$ is the number of punctures. (Recall that $s(n)$ is given by $\mA$.)

\vskip 10pt

If all the above hypotheses hold, then there exists a continuous path $\mL:[0,1]\to \EL$ connecting $f_1(0)$ to $f_1(1) $
so that   for $s\in [0,1]$, $\mL(s)$ is the weak Hausdorff limit of $\{\mL_n^*(s(n))\}_{n\in \BN}$.
\end{lemma}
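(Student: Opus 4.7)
The plan is to define $\mL(s)$ for each $s\in[0,1]$ as a weak Hausdorff limit of the sequence $\{\mL_n^*(s(n))\}$, show it lies in $\EL$, and then verify continuity using the nested structure of the covers $\mT_i$.

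First, I would iterate the (sublimit) hypothesis. For fixed $s$, the nested assignment $I_1(j_1)\supset I_2(j_2)\supset\cdots$ satisfies $s\in I_i(j_i)\cap I_{i+1}(j_{i+1})$ for every $i$, so applying the hypothesis with $t=s(i+1)$ gives
\[ \mL_i^*(s(i))\subset N_{PT(S)}(\mL_{i+1}^*(s(i+1)),\epsilon_i). \]
Telescoping and using $\epsilon_{i+1}<\epsilon_i/2$ yields $\mL_n^*(s(n))\subset N_{PT(S)}(\mL_m^*(s(m)),2\epsilon_n)$ for all $m>n$. By compactness of the Hausdorff topology on closed subsets of $PT(S)$, some subsequence $\mL_{n_k}^*(s(n_k))$ converges Hausdorff to a closed set $\Lambda$, and the above containment forces $\mL_n^*(s(n))\subset N_{PT(S)}(\Lambda,2\epsilon_n)$ for every $n$. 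If $\Lambda'$ is a second Hausdorff sublimit, the same estimate gives $\Lambda\subset\Lambda'$ and $\Lambda'\subset\Lambda$, so the full Hausdorff limit $\Lambda(s)$ exists.

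The second step is to upgrade $\Lambda(s)$ to a weak Hausdorff limit in $\EL$ using the (filling) hypothesis. The intersections $\mL_n^*(s(n))\cap C_m$ have angles uniformly bounded below and, along a subsequence, Hausdorff-converge to at least $4g+p+1$ transverse points of $\Lambda(s)\cap C_m$. Thus $\Lambda(s)$ meets every simple closed geodesic transversely, so every complementary region must be a disc or a once-punctured disc; that is, $\Lambda(s)$ is filling. Since two disjoint filling laminations on $S$ cannot coexist (one would have to live in the disc/punctured-disc complement of the other), $\Lambda(s)$ contains a unique minimal filling sublamination $\mL(s)\in\EL$, and $\Lambda(s)$ is a diagonal extension of $\mL(s)$. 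Hence $\mL_n^*(s(n))$ weak-Hausdorff converges to $\mL(s)$. The endpoint values are automatic, since $f_n(0)$ and $f_n(1)$ are constant in $n$ and already lie in $\EL$.

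Third, continuity is verified via Lemma \ref{path connectivity3}. Given $\epsilon>0$ and $s$, choose $i$ with $4\epsilon_i<\epsilon$; since $I_i(j_i)$ is an open neighborhood of $s$, there is $\delta>0$ with $(s-\delta,s+\delta)\cap[0,1]\subset I_i(j_i)$. For $|s'-s|<\delta$, the assignment of $s'$ gives $I_i(j_i')$ with $|j_i-j_i'|\le 1$, so $s,s'\in I_i(j_i)\cap I_i(j_i')$. The one-sided containments $\mL_i^*(s(i))\subset N_{PT(S)}(\mL(s),2\epsilon_i)$ and $\mL_i^*(s'(i))\subset N_{PT(S)}(\mL(s'),2\epsilon_i)$, combined with the sublimit hypothesis applied at level $i$ on the overlap region, produce a point of $\mL_{i-1}^*(s(i-1))$ whose $2\epsilon_i$-neighborhoods meet both $\mL(s)$ and $\mL(s')$. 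Density of leaves in the minimal lamination $\mL(s')$ then yields $\mL(s')\cap N_{PT(S)}(\mL(s),\epsilon)\ne\emptyset$, as required.

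The main obstacle I anticipate is the uniqueness argument in step two: each $\mL_n^*(s(n))$ may have up to two minimal components, so a priori the Hausdorff limit $\Lambda(s)$ could contain two distinct minimal filling pieces. Ruling this out uses that two disjoint filling laminations would each force the other into a union of disc components. A secondary subtlety in the continuity step is ensuring that the point we produce close to $\mL(s)$ actually lies on the minimal core $\mL(s')$ rather than on a diagonal leaf of $\Lambda(s')$; this is resolved by appealing to minimality and density of $\mL(s')$, noting that a diagonal leaf spiraling onto $\mL(s')$ passes arbitrarily close to points of $\mL(s')$.
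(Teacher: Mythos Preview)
Your first step is actually a clean improvement on the paper: the telescoped sublimit inequality does force all Hausdorff sublimits to coincide, so the full Hausdorff limit $\Lambda(s)$ exists. The paper only establishes that all sublimits are diagonal extensions of a common minimal lamination.

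There are, however, two genuine gaps.

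\textbf{Step two: filling of the minimal core.} You show $\Lambda(s)$ is filling and has no closed leaves, then assert it contains a minimal filling sublamination. This does not follow. A filling lamination with no compact leaves can have several non-filling minimal components joined by isolated diagonals: for instance, on a genus-$2$ surface take minimal filling laminations $\mH_1,\mH_2$ on the two one-holed tori bounded by a separating curve $\gamma$, and add a single diagonal crossing $\gamma$. The result is filling, has no closed leaf, yet every minimal sublamination is non-filling. Your ``two disjoint filling laminations cannot coexist'' gives uniqueness once existence is known, but not existence. The paper's argument is different and is precisely where the $4g+p+1$ bound enters: if a minimal sublamination $\mL\subset\Lambda(s)$ were not filling, some simple closed geodesic $C$ could be isotoped into any neighborhood of $\mL$, and an elementary count then forces $|C\cap\Lambda(s)|<4g+p+1$, contradicting the (filling) hypothesis. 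You invoke the $4g+p+1$ only to get nonempty transverse intersection with each $C_m$, which is too weak.

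\textbf{Step three: from $\Lambda(s')$ to $\mL(s')$.} First, your displayed containment $\mL_i^*(s(i))\subset N_{PT(S)}(\mL(s),2\epsilon_i)$ is not what the telescoping gives; you only get $\mL_i^*(s(i))\subset N_{PT(S)}(\Lambda(s),2\epsilon_i)$, and $\Lambda(s)$ may strictly contain $\mL(s)$. More seriously, your resolution of the diagonal-leaf problem is incorrect. Knowing that a point $y$ of $\Lambda(s')$ lies near $\mL(s)$, you cannot conclude $y$ is near $\mL(s')$ in $PT(S)$: a diagonal leaf spirals onto $\mL(s')$ only at its ends, not at the particular location $y$. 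The paper handles this differently. Since $\mL(s)$ is minimal with infinitely many leaves, one produces arbitrarily many nearly parallel arcs of $\Lambda(s')$ inside a small disc $D$ about $z\in\mL(s)$. A short geodesic $\gamma\subset D$ transverse to all $N$ of them must meet $\mL(s')$: if not, $\gamma$ lies in a single ideal-polygon complementary region of $\mL(s')$ and crosses at most as many arcs as there are diagonals in that polygon, which is bounded independent of $N$.
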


\begin{proof}  We record the following useful and immediate fact which is a consequence of the sublimit condition and the requirement that for all $i$, $\epsilon_{i+1}<\epsilon_i/2$.

\begin{claim}  If $t\in I_i(j)$  and $n\ge i$, then $\mL_i^*(t_j^i)\subset N_{PT(S)}(\mL_n^*(t) ,2\epsilon_i)$. \qed\end{claim}

This claim demonstrating the utility of the sublimit condition shows how a given $f_i$ imposes structure on $f_k$ for all $k\ge i$ and is the key to the proof of continuity.   A similar result together with the filling condition are the main ingredients for showing that $\mL(s)$ is the weak Hausdorff limit of the $L^*_n(s(n))$'s.  Here are the details.

Fix $s$.  We construct the minimal and filling $\mL(s)$.  Given $s\in [0,1]$, let 
$I_1(j_1)\supset I_2(j_2)\supset \cdots$ be the nested sequence of $I_i(j)$ intervals given by $\mA$
which contain $s$.   After passing to subsequence we can assume that 
$\{\mL_{n_i}^*(s(n_i))\}$ converges in the Hausdorff topology to the lamination 
$\mL^\prime$.  $\mL^\prime$ contains no closed leaves since the filling condition implies that $\mL^\prime$  is transverse to every simple closed geodesic.  Let $\mL$ be a minimal sublamination of $\mL^\prime$.  If 
$\mL$ is not filling, then there exists a simple closed geodesic $C$, disjoint from 
$\mL$ that can be isotoped into any neighborhood of  $\mL$ in $S$.    An 
elementary topological argument shows that  $|C\cap \mL^\prime| < 4g +p +1$.  This 
contradicts the second part of the filling condition. We now show that $\mL$ is 
independent of subsequence.    Let $\mL_1\in \EL$ be a  lamination that is the weak Hausdorff limit of the subsequence $\{\mL^*_{m_i}(s(m_i))\}$ and let
$\epsilon>0$.  If $x\in \mL_1$, then there exists a sequence $x_{m_i}\in 
\mL_{m_i}^*(s(m_i))$ such that lim$_{i\to \infty}\ x_{m_i}=x$ where the limit is taken in 
$PT(S)$.  The sublimit property implies that for all $k>m_i$, there exists $y_k\in 
\mL_k^*(s(k))$ such that $d_{PT(S)}(x_{m_i}, y_k)<\epsilon_{m_i}+\epsilon_{m_i+1}+\cdots 
+\epsilon_{k-1}<2\epsilon_{m_i}$.   As $i\to \infty$, $\epsilon_{m_i}\to 0$ and $n_i\to \infty$ so it follows that  $\mL_1\subset \mL^\prime$.  Since $\mL_1, \mL\in \EL$, we conclude that $\mL=\mL_1$.   Denote $\mL$ 
by $\mL(s)$.   We have shown that $\mL(s)$ exists and satisfies the weak Hausdorff limit condition of the conclusion.

We apply Lemma \ref{path connectivity3} to show that $ f$ is continuous.  Fix $s\in 
[0,1]$ and let  $I_1(j_1)\supset I_2(j_2)\supset \cdots$ be the nested sequence of 
$I_i(j)$ intervals given by $\mA$ which contain $s$.  
    Let $U $ open in $PT(S)$ such that $\mL(s)\subset 
U$.  Pick $z\in \mL(s)$ and let $\epsilon >0$ such that $N_{PT(S)}(z,\epsilon)\subset U$.  We will show that for $t$ sufficiently close to $s$, $\mL(t)\cap N_{PT(S)}(z,\epsilon)\neq\emptyset$.
Choose $i$, such that $\epsilon_i<\epsilon/2$.  Since $\mL(s)$ is the weak Hausdorff 
limit of the $\mL_k^*(s(k))$ it follows that for $k$ sufficiently large there exists an 
$x_k\in \mL_k^*(s(k))$ so that $d_{PT(S)}(x_k,z)<\epsilon_i$. Assume 
that $k>i$.   The Claim implies that if $t\in I_k(j_k)$, then for every $n
\ge k$ there exists $y_n\in \mL^*_n(t)$ such that $d_{PT(S)}(x_k,y_n)\le 2
\epsilon_k$ and hence for some diagonal extension $\mL^\prime(t)$ of 
$\mL(t)$ there exists $y \in \mL^\prime(t)$ with $d_{PT(S)}(y, x_k)\le 2
\epsilon_k<\epsilon_i$ and hence  
$d_{PT(S)}(y,z)<2\epsilon_i<\epsilon$.

Assume that $\epsilon$ is sufficiently small so that the $\epsilon$-disc $D\subset S$, centered at $z$, is embedded.  
Since $\mL(s)$ is minimal and has infinitely many leaves, the above argument shows that given $N\in \BN$, there exists a $\delta>0$ so that $|t-s|<\delta$ implies for some diagonal extension $\mL^\prime(t)$ of $\mL(t)$, $\mL^\prime(t)\cap D$ contains at least $N$ arcs extremely close to and nearly parallel to the local leaf of $\mL(s)$ through $z$.  
Extremely close and nearly parallel mean that if $\sigma$ is a geodesic path lying between a pair of these arcs, then $\sigma\cap N_{PT(S)}(z,\epsilon) \neq\emptyset$.  It is possible that all of these $N$ arcs lie in $\mL^\prime(t)\setminus\mL(t)$, however, if $N$ is sufficiently large and $\gamma\subset D$ is a geodesic  transverse to these $N$ arcs, then $\gamma\cap \mL (t)  \neq\emptyset$.  Therefore, $\mL(t)\cap N_{PT(S)}(z,\epsilon)\neq\emptyset.$   \end{proof}

\section{$\EL$ is path connected}

Let $\mu_0$, $\mu_1\in \EL$.   In this section we produce a sequence of PL almost filling paths in 
$\ML$ that satisfy  Proposition \ref{path criterion}, from 
which we obtain a path from $\mu_0$ to $\mu_1$.  

By Lemma \ref{PL path} there exists a PL almost filling  path $f_1:[0,1]\to \ML$ from $\mu_0$ to $\mu_1$.   Fix $\epsilon_1>0$.  Let $\mL_1^*(t)$ denote the almost minimal almost filling 
sublamination of $\phi(f_1(t))$.  By Lemma \ref{discrete approximation}, there 
exists a pointed open cover $\hat \mT_1=(T_1,\hat \mI_1)$ with  $T_1=\{t_1^1,\cdots, t^1_n\}$ and $\hat \mI_1=\{\hat I_1(1),\cdots , \hat I_1(n)\}$ such that for $t\in \hat I_1(j)$, 
$\mL_1^*(t_j^1)\subset N(\mL_1^*(t), \epsilon_1/2)$.  By shrinking the $\hat I_1(j)$'s we can assume that for each $k$ some open neighborhood $Y(k)$ of $t_k^1$ nontrivially intersects only $\hat I_1(k)$.  By further shrinking the $\hat I_1(k)$'s we get a new open cover   $\mI_1=\{I_1(1),\cdots,I_1(n)\}$ and $\eta_1>0$ such that for all $j$, $N_{[0,1]}(I_1(j),\eta_1)\subset \hat I_1(j)$.  We let $\mT_1=(T_1,\mI_1)$.    Since $f_1$ is transverse 
to the $\hat B_\alpha$'s, it intersects $\hat B_{C_1}$ in at most a finite set of points.  By including these points in $F$ (notation as in Lemma \ref{discrete approximation}) we 
can assume that these points were included in $T_1$.     Without loss we will assume that there exists a unique point $x$ of intersection, 
and $f_1(t_p^1)=x$.

We  homotope $f_1$ to $f_2$ so that $f_2\cap \hat B_{C_1}=\emptyset$ as follows.    First 
homotope $f_1$ to $f_{1.1}$, via a homotopy supported in $Y(p)$ so 
that the image of $f_{1.1}$ is the image of $f_1$ together with the 
escape ray $r_x$. (See  Definition \ref{escape ray}).  I.e. the path $f_
{1.1}$ follows along the image of $f_1$ until it hits $x$, then  goes all the way out along $r_x$ and then back along $r_x$ to 
 $x$ and then continues away from $\hat B_{C_1}$ as does $f_1$.  Since $\hat B_{C_1}$ 
is  PL ball of codimension-1 a very small perturbation of $f_{1.1}$ yields 
$f_{1.2}$  disjoint from $\hat B_{C_1}$.  A generic PL approximation of $f_{1.2}$   yields a PL almost filling path $f_2$ disjoint from $\hat B_{C_1}$.  

Since $\mL_1^*(t_p^1)$ is the almost minimal almost filling lamination associated to any point 
on $r_x$, Proposition \ref{super convergence} plus compactness implies that if the nontrivial tracks of the 
homotopy from $f_{1}$ to $f_{2}$ lay sufficiently close to $r_x$, then  for $s\in Y(p)$, 
 $\mL_1^*(t_p^1)\subset N(\mL_{2}^*(s),\epsilon_1)$.  Since 
$f_2([0,1])\cap \hat B_{C_1}=\emptyset$, $C_1$ nontrivially intersects each $\mL_2^*(t)$ 
transversely.  
Since the homotopy from $f_1$ to $f_2$ is supported in $Y(p)$, $f_2$ satisfies the 
sublimit property  (i.e. the sublimit property holds for $i+1=2$) provided that the, to be defined, cover $\mI_2$ has the property that if $I_2(j)\cap I_1(k)\neq \emptyset$, then $I_2(j)\subset \hat I_1(k)$.

We now choose $\epsilon_2$ so the (to be constructed) $f_i$'s will satisfy the 
filling property.   By compactness of $[0,1]$ and  Proposition 
\ref{super convergence} there exists a $\psi>0$ that is a uniform lower bound, independent of $t$, 
for the  maximal angle of intersection between a 
$\mL^*_2(t)$ and $C_1$.  Furthermore, 
there exists $\kappa>0$ so that if $\mL$ is a geodesic lamination, $t\in [0,1]$ and $\mL_2^*(t)\subset N( 
\mL,\kappa)$, then $\mL\cap C_2\neq\emptyset$ and the maximal angle of intersection is at least $\psi/2$.    
Since $C_1$ has bounded length, the  lower bound $\psi/2$
 on maximal angle of intersection of a geodesic lamination with $C_1$  implies a lower bound $\phi$ on 
minimal angle of intersection.

Let $N=4g+p+1$, where $g=\genus(S)$ and $p$ is the 
number of punctures.  Since for every $t$, $|\mL^*_2(t)\cap C_2|=\infty$,  the angle condition, compactness, and Proposition \ref{super convergence} imply that
there exists a $\psi^\prime>0$ and $\kappa^\prime<\kappa$ so that each $\mL_2^*(t) \cap \C_1$ contains $N$ points, 
any two of which are at least  distance $\psi^\prime$ apart, measured along $C_1$.  
Furthermore, if  $\mL$ is a geodesic lamination  with $\mL_2^*(t)\subset N( 
\mL,\kappa^\prime)$, then $\mL\cap C_1$ contains $N$ points, any two of which are 
$\psi^\prime/2$ apart on $C_1$.     Pick $\epsilon_2<\min(\kappa^\prime/2, 
\epsilon_1/2)$.   

If the, to be defined, $f_i$'s and  $I_i(j)$'s satisfy the sublimit property and the  
$\epsilon_i$'s are chosen so that $\epsilon_{i+1}<\epsilon_i/2$, then the $f_i$'s will 
satisfy the filling property with respect to $C_1$, independent of choice of assignment function.  To see this, first pick $s\in [0,1]$ and suppose that $s\in I_2(q)$.  If $m\ge 2$ and $s\in I_m(j)\in \mI_m$, then by the sublimit property, $\mL^*_2(t_q^2)\subset N_{PT(S)}(\mL^*_m(t^m_j),\epsilon_{m-1}+\cdots+\epsilon_{3}+\epsilon_2)\subset N_{PT(S)}(\mL^*_m(t^m_j),2\epsilon_2)\subset N_{PT(S)}(\mL^*_m(t^m_j),\kappa^\prime)$ and hence $\mL^*_m(t^m_j)\cap C_1$ contains $N$ points spaced in $C_1$ at least $\psi^\prime/2$ apart and the minimal angle of intersection of $\mL^*_m(t^m_j)$ with $C_1$ is bounded below by $\phi$.  The filling property for $C_1$ follows.   

Let $X_2=\{t\in [0,1]|f_2(t)\cap \hat B_{C_2}\neq\emptyset\}$.
Apply Lemma \ref{discrete approximation} to $f_2$ using $\epsilon= 
\epsilon_2$ and $F=T_1\cup X_2$ (henceforth called $F_2$) to obtain $\hat \mT_2=(T_2,\hat\mI_2)$.  Again we can assume that for each $k$, some open neighborhood of $t^2_k$ nontrivially intersects only $\hat I_2(k)$.  By adding appropriate extra points to $F_2$, $\hat I_2(j)\cap I_1(k)\neq\emptyset$ implies $\hat I_2(j)\subset \hat I_1(k)$.  Finally shrink $\hat \mI_2$ as before to obtain $\mI_2$ and $\mT_2$.

 Use $\epsilon_2$ 
and $C_2$ to construct $f_3$ from $f_2$ in the same manner that $f_2$ was constructed 
from $f_1$. 
Again the resulting $f_3$ satisfies the sublimit condition provided $I_3(j)\cap I_2(k)\neq\emptyset$ implies $I_3(j)\subset \hat I_2(k)$.  Choose 
$\epsilon_3$ as above so that subsequently constructed $f_i$'s will satisfy the filling condition with respect to $C_2$.  Define and construct $\hat \mT_3$ and $\mT_3$ analogous to the constructions of $\hat\mT_2$ and $\mT_2$. 

The proof of path connectivity is now completed by induction.\qed 

\vskip 10pt

Our proof shows that in the sublimit sense any path in $\ML$ connecting points in $\EL$ can be perturbed rel endpoints by an arbitrarily small amount to a path in $\EL$.  More precisely we have, 

\begin{lemma}\label{effective connectivity}  If $f:[0,1]\to \ML$ is a PL almost filling
path  with $\phi(f(0), \phi(f(1))$ in $\EL$, $\epsilon>0$ and $\delta>0$, then there exists a 
path $g:[0,1]\to \EL$ from $\phi(f(0))$ to $ \phi(f(1))$ such that for each $s\in [0,1]$ there exists $t\in [0,1]$ with $|t-
s|<\delta$ such that, 
 $\mL^*(t)\subset N_{PT(S)}(g(s)^\prime,\epsilon)$, for some diagonal extension $g(s)^\prime$ of $g(s)$.  Here $\mL^*(t)$ denotes the almost minimal almost filling sublamination of $\phi(f(t))$.\end{lemma}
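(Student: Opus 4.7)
The plan is to rerun the construction from the preceding path-connectivity proof, treating the given PL almost filling path $f$ as the base case $f_1$ and tuning the parameters so that the desired approximation property holds automatically. I would choose $\epsilon_1 < \epsilon/2$ and build $\hat\mT_1 = (T_1, \hat\mI_1)$ as in that proof, with the extra constraint that every $I_1(j)$ has diameter less than $\delta$. Running the inductive construction gives $f_2, f_3, \ldots$, covers $\mT_2, \mT_3, \ldots$, and $\epsilon_i \searrow 0$ with $\epsilon_{i+1} < \epsilon_i/2$, satisfying the sublimit and filling hypotheses of Lemma \ref{path criterion}, which then yields a continuous $g:[0,1]\to\EL$ with $g(0) = \phi(f(0))$ and $g(1) = \phi(f(1))$.

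For $s \in [0,1]$, the candidate witness is $t = t_{j_1}^1$, where $j_1$ is the index with $s \in I_1(j_1)$; the diameter condition on $\mI_1$ gives $|t-s| < \delta$. What remains is to exhibit a single diagonal extension $g(s)'$ of $g(s)$ such that $\mL^*(t) = \mL_1^*(t_{j_1}^1) \subset N_{PT(S)}(g(s)', \epsilon)$. The Claim from the proof of Lemma \ref{path criterion}, applied at level $i=1$, immediately supplies $\mL^*(t) \subset N_{PT(S)}(\mL_n^*(s), 2\epsilon_1)$ for every $n \geq 1$.

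The main obstacle is upgrading this family of containments, each into a neighborhood of the moving lamination $\mL_n^*(s)$, to a single containment inside the neighborhood of one diagonal extension of $g(s)$, rather than having a possibly different extension witness each point of $\mL^*(t)$. I would extract a Hausdorff-convergent subsequence $\mL_{n_k}^*(s) \to \mL^{**}$ in $\mL(S)$ and argue that $\mL^{**}$ itself is a diagonal extension of $g(s)$. To see this, fix $k'$ and apply the Claim with $i=k'$, $t=s$, $n=n_k$: this gives $\mL_{k'}^*(s(k')) \subset N_{PT(S)}(\mL_{n_k}^*(s), 2\epsilon_{k'})$, which in the limit $k \to \infty$ becomes $\mL_{k'}^*(s(k')) \subset N_{PT(S)}(\mL^{**}, 2\epsilon_{k'})$. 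Choosing a further subsequence in $k'$ along which $\mL_{k'}^*(s(k'))$ Hausdorff-converges to some diagonal extension $g(s)''$ of $g(s)$ (guaranteed by weak Hausdorff convergence) and letting $\epsilon_{k'} \to 0$ yields $g(s)'' \subset \mL^{**}$. Since $g(s)$ is minimal and filling and is contained in $\mL^{**}$, the lamination $\mL^{**}$ must itself be a diagonal extension of $g(s)$. Setting $g(s)' := \mL^{**}$, the Hausdorff convergence $\mL_{n_k}^*(s) \to g(s)'$ combined with the Claim's estimate then gives $\mL^*(t) \subset N_{PT(S)}(g(s)', 2\epsilon_1) \subset N_{PT(S)}(g(s)', \epsilon)$ in the limit, completing the argument.
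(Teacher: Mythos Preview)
Your proof is correct and follows essentially the same strategy as the paper: run the path-connectivity construction with $f_1=f$ and initial parameters tuned to $\epsilon$ and $\delta$, then combine the sublimit estimate with convergence to a diagonal extension.  The only difference is that the paper works with the moving sequence $\mL_n^*(s(n))$ and directly invokes the weak-Hausdorff conclusion of Lemma~\ref{path criterion} (so a single diagonal extension $\mL'(s)$ with $\mL_n^*(s(n))\subset N_{PT(S)}(\mL'(s),\epsilon_1)$ for large $n$ is immediate), whereas you use the fixed point $s$, extract a Hausdorff limit $\mL^{**}$ of $\mL_{n_k}^*(s)$, and then reprove that $\mL^{**}$ is a diagonal extension; this is valid but makes the argument longer than necessary.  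Accordingly the paper needs $\epsilon_1<\epsilon/4$ (to absorb $2\epsilon_1+\epsilon_1$), while your route gets by with $\epsilon_1<\epsilon/2$.
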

 
 \begin{proof}  Apply the above proof with $f_1=f$, $\epsilon_1<\epsilon/4$ and the mesh of $F<\delta$.   Pick $s\in [0,1]$.  With  notation as in  Proposition \ref{path criterion},    if $n$ is sufficiently large  $\mL_n^*(s(n))\subset N_{PT(S)}(\mL^\prime(s),\epsilon_1)$   for some diagonal extension $\mL^\prime(s)$ of $ \mL(s)$.  By the sublimit property $\mL_1^*(s(1))\subset N_{PT(S)}(\mL_n^*(s(n)),2\epsilon_1)$.  Since $\delta\le \textrm{mesh}\ T_1$,   $|s-s(1)|\le \delta$ and the result follows.\end{proof}

\section{$\EL$ is locally path connected} 

In this section we show that if S satisfies the hypothesis of Theorem 
\ref{main}, then $\EL$ is locally path connected.  It suffices to show that if 
$\mu\in U$  an open set in $\EL$, then there exists an open set $V\subset U$ 
containing $\mu$ such that for each $\mL\in V$ there is a path in $U$ from $\mu$ to $\mL$.    Therefore, the path component $W$ of $U$ containing $\mu$ is open and path 
connected.

In order to free the reader of nasty notation and slightly extra technical detail, we first 
consider the case that $\mu$ is maximal, i.e. every complementary region is 
either a 3-pronged disc or a once punctured monogon.  

Let $\tau_1$ be a transversely recurrent train track that fully carries $\mu$ and let $\tau_1, \tau_2, \tau_3,\cdots$ be the unique sequence of train tracks, such that $\tau_{i+1}$ is a length-i unzipping of $\tau_1$ and $\tau_{i+1}$ fully carries $\mu$.  

Let  $E_i =\{\mG\in \EL|\mG$ is fully carried by $\tau_i\}$.   We verify that $E_i$ is open in $\EL$ by showing that any $\mH\in \EL$ close in the Hausdorff topology to a diagonal extension of $\mG$ is also carried by $\tau_i$.  This readily follows from the fact that $\tau_i$ is maximal, hence any diagonal extension of $\mG$ is carried by $\tau_i$.   
By Corollary \ref{splitting corollary} by dropping the first terms of the $\tau_i$ sequence, we can assume that if $\mG\in E_1$, then $\mG\subset U$.  

We next show that there exists $W_1$ open in $PT(S)$ so that $\mu\subset W_1$ and if $\mL\in \mL(S)$ with $\mL\cap W_1\neq\emptyset$, then $\mL$ is fully carried by $\tau_1$.  Since $\mu$ is minimal, if $W_1$ is a very small neighborhood and $\mL\cap W_1\neq\emptyset$, then some large compact segment $\sigma$ of a leaf of $\mL$ has the property that it is very close to $\mu$ in the Hausdorff topology, so it lies in $N(\tau_1)$, is transverse to the ties and hits each tie multiple times.   Using the maximality of $\tau_1$ isotope the rest of $\mL$ into $N(\tau_1)$ so that the resulting lamination is transverse to the ties, i.e. so that $\tau_1$ fully carries $\mL$.  

There exists $W_2$ open in $PT(S)$ so that $\mu\subset W_2$ and if $\mL\in \mL(S)$ with $\mL\cap W_2\neq\emptyset$, then \emph{every} sublamination of $\mL$ is fully carried by $\tau_1$.  If false, then there exists a sequence of laminations $\mL_1, \mL_2, \cdots$ with sublaminations $\mG_1, \mG_2, \cdots$ such that $\mL_i\cap N(\mu,1/i)\neq\emptyset$, but $\mG_i\cap W_2=\emptyset$.  After passing to subsequence we can assume the $\mG_i$'s converge in the Hausdorff topology to $ \mG$ where $\mG\cap W_2=\emptyset$.  Since $\mu$ is filling, this implies that $\mG\cap\mu\neq\emptyset$ and $\mG$ is transverse to $\mu$. Thus for $i $ sufficiently large, there is a lower bound on angle of intersection of $\mG_i $ with $\mu$.  On the other hand $\mu$ is approximated arbitrarily well by segments lying in leaves of $\mL_i$.  Thus for $i$ sufficiently large, $\mL_i$ has self intersection which is a contradiction.

Let $\epsilon_1>0$, be such that $N_{PT(S)}(\mu,\epsilon_1)\subset W_2$.  Again by Corollary \ref{splitting corollary} for $i$ sufficiently large, $\hat\mE(\tau_i)\subset N_{PT(S)}(\mu,\epsilon_1/2)$, where $\hat \mE(\tau_i)\subset PT(S)$ is the union of geodesics corresponding to elements of $\mE(\tau_i)$.  Let $j $ be one such $i$.

We complete the proof of the theorem where the role of $V$ (as 
in the first paragraph) is played by $E_j$.  Recall that $V(\tau_j)$ (resp. $ \inte (V(\tau_j$)) is the set of measured laminations carried (resp. fully carried) by $\tau_j$.   $\tau_j$ is transversely recurrent since it is an unzipping of $\tau_1$, it is recurrent since it carries $\mu$ and it is maximal since $\mu$ is maximal.  Therefore in the terminology of p. 27 \cite{PH}, $\tau_j$ is complete and hence by Lemma 3.1.2 \cite{PH} $\inte(V(\tau_j))$ is open in $\ML$.  Let $\mu_0=\mu$ and $\mu_1\in E_j$ and for $i=0,1$ pick  $\lambda_i\in \inte(V(\tau_j))$ so that 
$\phi(\lambda_i)=\mu_i$.  Each $ \lambda\in V(\tau_j)$ is determined by a transverse measure on the branches of $\tau_j$ and conversely.  Also each of $\lambda_0, \lambda_1$ induces a positive transverse measure on each branch of $\tau_j$.  Therefore we can define $e:[0,1]\to \inte (V(\tau_j))$ to be the straight line path from $\lambda_0$ to $\lambda_1$.    Let 
$f:[0,1]\to \ML$ be a PL almost filling path that closely approximates $e$ and connects its endpoints.  Closely means that for each $t$, $f(t)\in \inte (V(\tau_j))$.  Using $\epsilon=\epsilon_1/2$ apply Lemma \ref{effective 
connectivity} to obtain the path $g:[0,1]\to \EL$.  We show that 
$g([0,1]) \subset E_1\subset U$.  With notation as in Lemma \ref{effective connectivity}, if $s\in [0,1]$, then there exists $t\in [0,1]$ such that $\mL^*(t) \subset N_{PT(S)}(g^\prime(s), \epsilon)$, for some diagonal extension $g^\prime(s)$ of $g(s)$.
 Since $\mL^*(t)$ is carried by $\tau_j$, each point $x$ in  $\mL^*(t)$ is $\epsilon_1/2$ close in $PT(S)$ to a point of $\mu$ and by Lemma \ref{effective connectivity} some point $y$  of $g^\prime(s)$ is $\epsilon$ close to $x$, we conclude $d_{PT(S)}(y,z)\le \epsilon_1$ and hence $g^\prime(s)\cap W_2\neq\emptyset$.  Therefore all sublaminations of $g^\prime(s)$ are fully carried by $\tau_1$, so $g(s)$ is fully carried by $\tau_1$.  This completes the proof when $\mu$ is maximal.

The general case follows as above except that we must use the various diagonal 
extensions of $\mu$ and its associated train tracks.     Here are more details.  Let $U\subset \EL $ be open with $\mu\in U$ and 
let $\lambda\in \ML$ such that $\phi(\lambda)=\mu$.   Let $\tau_1, 
\tau_2, \cdots$ as above.  By dropping a finite number of initial terms 
we can assume that each $\tau_i$ has exactly $n$ recurrent diagonal 
extensions, $\tau_i^1,\cdots, \tau_i^n$ and for each $k$, the 
sequence $\tau^k_1,\tau_2^k,\cdots $ is a full unzipping sequence.   
Furthermore, we can find a sequence of laminations $\lambda_1^k, 
\lambda_2^k, \cdots \in \ML$ such that $\lim_{i\to\infty}\ \lambda_i^k\to 
\lambda,\ \lambda_i^k$ is fully carried by $\tau_i^k$ and $\cap \mE
(\tau_i^k)=\mE(\mu^k)$ where $\mu^k$ is a diagonal extension of $
\mu$.  By an elementary topological argument, each such $\mu^k$ is 
carried by some $\{\tau_i^q\}$ sequence where each $\tau_i^q$ is 
maximal.  (Compare with \cite {H2} which shows that a birecurrent 
train track is contained in a maximal such track.)  Let 
$E_i=\{\mG\in \EL|\mG\ \textrm{is fully carried by a}\ \tau_i^k\}$.  Each 
$E_i$ is open in $\EL$ and  by Corollary 
\ref {splitting corollary} after dropping the first $N$ terms of the various 
sequences we can assume that $E_1\subset U$.  

There exists $W_2$ open in $PT(S)$ such that $\mu\subset W_2$ and if $\mL\in \mL(S)$ with $\mL\cap W_2\neq\emptyset$, then every minimal sublamination of $\mL$ is fully carried by one of $\tau_1^1,\cdots,\tau_1^n$.  Since each diagonal of a $\mu^k$ is dense in $\mu$, there exists $W_2^1, \cdots, W_2^n$ open in $PT(S)$ such that for each $k, \ \mu^k\subset W_2^k$ and if $\mL\cap W_2^k\neq\emptyset$, then every minimal sublamination of $\mL$ is fully carried by one of $ \tau_1^1,\cdots,\tau_1^n$.   Let $\epsilon_1>0$, be such that $N_{PT(S)}(\mu^k,\epsilon_1)\subset W_2^k$ for each $k$.  Let $j$ be so that  for each $k$, $\hat\mE(\tau_j^k)\subset N_{PT(S)}(\mu^k,\epsilon_1/2)$.

Let $\mu_0=\mu$ and $\mu_1\in E_j$.  Let $\tau_j^k$ be a maximal track which carries $\mu_1$.   For $i=0,1$ pick  $\lambda_i\in V(\tau_j^k)$ so that 
$\phi(\lambda_i)=\mu_i$.  Define $e:[0,1]\to V(\tau_j^k)$ to be the straight line path from $\lambda_0$ to $\lambda_1$.    Since $V(\tau_j^k)$ is a polyhedral ball of codimension-0, and generic PL paths are almost filling, there exists
$f:[0,1]\to V(\tau_j^k)$ a PL almost filling path from $\lambda_0$ to $\lambda_1$.   Using $\epsilon=\epsilon_1/2$ apply Lemma \ref{effective 
connectivity} to obtain the path $g:[0,1]\to \EL$.  We show that 
$g([0,1]) \subset E_1\subset U$.  With notation as in Lemma \ref{effective connectivity}, if $s\in [0,1]$, then $\mL^*(t) \subset N_{PT(S)}(g^\prime(s), \epsilon)$, for some diagonal extension $g^\prime(s)$ of $g(s)$.
 Since $\mL^*(t)$ is carried by $\tau_j^k$, each point $x$ in  $\mL^*(t)$ is $\epsilon_1/2$ close in $PT(S)$ to a point of $\mu^k$ and by Lemma \ref{effective connectivity} some point $y$  of $g^\prime(s)$ is $\epsilon$ close to $x$, we conclude $d_{PT(S)}(y,z)\le \epsilon_1$ and hence $g^\prime(s)\cap W_2^k\neq\emptyset$.  Therefore each minimal sublamination of $g^\prime(s)$ is fully carried by a $\tau_1^m$, so $g(s)$ is fully carried by some $\tau_1^k$.  \qed

\section{$\EL$ is cyclic}

The main result of this section is the following

\begin{theorem}  \label{cyclic}If $S$ is as in the hypothesis of Theorem \ref{main} and $\mu\in \EL$, then there exists an embedded simple closed curve in $\EL$ passing through $\mu$.\end{theorem}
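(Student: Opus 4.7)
\emph{Plan.} Given $\mu \in \EL$, the strategy is to produce a second point $\nu \in \EL$ with $\nu \neq \mu$, together with two paths $g_0, g_1 : [0,1] \to \EL$ from $\mu$ to $\nu$ whose images intersect only at the endpoints $\{\mu,\nu\}$. Replacing each $g_k$ by an embedded arc between its endpoints (possible since $\EL$ is Hausdorff and any path between distinct points contains an embedded arc), the union then forms an embedded simple closed curve through $\mu$. So the entire content is arranging two paths with disjoint interiors.

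\emph{Step 1: two essentially different PL almost filling paths in $\ML$.} Since $\EL$ is uncountable under the standing hypothesis on $S$, pick distinct $\mu, \nu, \eta \in \EL$ lying in pairwise disjoint open subsets of $PT(S)$. Choose lifts $x_0, x_1, z \in \ML$ with $\phi(x_0)=\mu$, $\phi(x_1)=\nu$, $\phi(z)=\eta$. By two applications of Lemma \ref{PL path} followed by a generic smoothing of the junction (compare Lemma \ref{generic PL path}), construct a PL almost filling path $f_0 : [0,1] \to \ML$ from $x_0$ to $x_1$ passing through $z$ at $t=\tfrac12$. By Lemma \ref{PL path} again, construct a second PL almost filling path $f_1 : [0,1] \to \ML$ from $x_0$ to $x_1$ which stays uniformly far from $z$ in $\ML$. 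Invoking Proposition \ref{super convergence} together with compactness of $[0,1]$, this separation forces the almost minimal sublaminations of $\phi(f_0(t))$ and $\phi(f_1(t))$ to lie in prescribed disjoint open subsets of $PT(S)$ for all $t$ in a fixed interior interval $[a,b] \subset (0,1)$.

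\emph{Step 2: lifting to paths in $\EL$ with disjoint interiors.} Apply Lemma \ref{effective connectivity} to each of $f_0$ and $f_1$, with parameters $\epsilon, \delta$ sufficiently small, to obtain continuous paths $g_0, g_1 : [0,1] \to \EL$ from $\mu$ to $\nu$. The lemma's conclusion places $g_k(s)$, up to a diagonal extension, within $\epsilon$ in $PT(S)$ of the almost minimal sublamination $\mathcal{L}_k^*(t)$ of $\phi(f_k(t))$ for some $t$ with $|t-s|<\delta$. Choosing $\epsilon$ smaller than the geometric separation arranged in Step 1 yields $g_0([a,b]) \cap g_1([a,b]) = \emptyset$. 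Near the endpoints $s \in [0,a] \cup [b,1]$, invoke the local path connectivity established in Section 5 to reroute the tails of $g_0$ and $g_1$ through disjoint path-connected open neighborhoods of $\mu$ and of $\nu$, thereby arranging $g_0([0,1]) \cap g_1([0,1]) = \{\mu,\nu\}$.

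\emph{Main obstacle.} The delicate step is the endpoint behavior: both $g_0$ and $g_1$ must converge to the same points $\mu$ and $\nu$, so they cannot be kept separated by any fixed positive distance there. The escape is local path connectivity, which produces arbitrarily small path-connected neighborhoods of each endpoint; within these one can route the two tails through disjoint path-connected sub-neighborhoods and thereby avoid unwanted coincidences. A secondary issue, handled by the super-convergence proposition, is that the forgetful map $\phi : \ML \to \LS$ is not injective, so separation of $f_0$ and $f_1$ in $\ML$ must be translated into separation of the associated almost minimal sublaminations in $PT(S)$, which is precisely what Proposition \ref{super convergence} and Lemma \ref{effective connectivity} accomplish. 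Once the two paths meet only at $\{\mu,\nu\}$, passage to embedded arcs and taking the union yields the desired simple closed curve through $\mu$.
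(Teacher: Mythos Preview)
Your separation mechanism does not survive the passage through Lemma~\ref{effective connectivity}. That lemma only gives the one-sided inclusion $\mL^*(t)\subset N_{PT(S)}(g(s)',\epsilon)$: the resulting lamination $g(s)$ may be much larger, in the $PT(S)$ sense, than the almost minimal sublamination it approximates. So even if you could arrange $\mL_0^*(t)$ and $\mL_1^*(t')$ to lie in disjoint open subsets of $PT(S)$, nothing prevents $g_0(s)=g_1(s')$: a single filling lamination can sit $\epsilon$-close to both. In fact your Step~1 claim is already unjustified: super convergence (Proposition~\ref{super convergence}) only tells you that $\eta$ is a \emph{sublimit} of $\phi(f_0(t))^*$ as $t\to\tfrac12$, i.e.\ $\eta\subset N(\phi(f_0(t))^*,\epsilon)$, not that $\phi(f_0(t))^*\subset N(\eta,\epsilon)$. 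And separation of $f_1$ from $z$ in $\ML$ says nothing about separation of $\phi(f_1(t))^*$ from $\eta$ in $PT(S)$, since $\phi$ is discontinuous. Finally, your endpoint fix is circular: both tails must terminate at $\mu$, so they cannot be routed through disjoint sub-neighborhoods of $\mu$.

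The paper's proof replaces your ``disjoint in $PT(S)$'' criterion with \emph{transverse crossing}: two ending laminations are distinct if and only if they have a point of transverse intersection, and this property is both open and stable under the one-sided inclusion of Lemma~\ref{effective connectivity} (if $\mL^*(t)$ crosses $\kappa$ at a definite angle and $\mL^*(t)\subset N(g(s)',\epsilon)$, then $g(s)$ crosses $\kappa$ too). Concretely, Lemma~\ref{arc} first builds an embedded arc $h:[-1,1]\to\EL$ through $\mu$ by taking a generic line in $\ML$ through a lift of $\mu$ and showing the two halves cross each other; Theorem~\ref{cyclic} then takes a PL path in $\ML$ from $h(-1)$ to $h(1)$ lying \emph{outside} a cell $V(\tau)$ containing $\mu$, so that every point along it crosses $\mu$, and hence (via Lemma~\ref{effective connectivity}) the resulting $\EL$-path $k$ crosses $h(s)$ for all small $|s|$. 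This crossing guarantees $k$ meets $h|[-\epsilon,\epsilon]$ only at its endpoints, and Wilder's lemma cleans up the embedding. The crossing criterion is what you are missing.
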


\begin{lemma} \label{arc} If $\mu\in \EL$, then there exists an embedding $f:[-1,1]\to \EL$ such that $f(0)=\mu$.   \end{lemma}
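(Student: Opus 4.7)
The plan is to construct the embedded arc by building two embedded arcs from $\mu$ into $\EL$ which meet only at $\mu$, and then concatenating them. Since $\EL$ is uncountable for the surfaces under consideration, I pick $\nu_-,\nu_+\in\EL$ distinct from each other and from $\mu$. By Section~5 (path connectivity), there exist continuous paths $p_\pm:[0,1]\to\EL$ with $p_\pm(0)=\mu$ and $p_\pm(1)=\nu_\pm$. The space $\EL$ is Hausdorff (it is metrizable as a subspace of $\mL(S)$, and by Klarreich's theorem the weak Hausdorff topology coincides with the Hausdorff topology on $\EL$), so the classical arc-in-path theorem (Moore--Sierpi\'nski) produces embedded subarcs $\alpha_\pm:[0,1]\to\EL$ with $\alpha_\pm(0)=\mu$, $\alpha_\pm(1)=\nu_\pm$, and $\alpha_\pm([0,1])\subseteq p_\pm([0,1])$.

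The next step is to arrange $\alpha_-((0,1])\cap\alpha_+((0,1])=\emptyset$, for then the concatenation $\alpha_-^{-1}*\alpha_+$ gives an injective $f:[-1,1]\to\EL$ with $f(0)=\mu$. For this I invoke local path connectivity (Section~6): fix a nested basis $U_1\supset U_2\supset\cdots$ of path-connected open neighborhoods of $\mu$ with $\bigcap_n U_n=\{\mu\}$. Select $\nu_-$ inside a very small $U_N$ with the path $p_-$ entirely inside $U_N$, so that $\alpha_-\subseteq U_N$. Select $\nu_+$ outside a larger neighborhood $U_M$ (with $M\ll N$), so $\alpha_+$ is forced to exit $U_M$. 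The portion of $\alpha_+$ outside $U_M$ is then automatically disjoint from $\alpha_-\subseteq U_N\subsetneq U_M$. The remaining intersections of $\alpha_+$ with $\alpha_-$ inside $U_M$ are handled by a ``last meeting time'' splice: let $t^\star$ be the largest parameter with $\alpha_+(t^\star)\in\alpha_-([0,1])$, write $\alpha_+(t^\star)=\alpha_-(s^\star)$, and replace the initial segment $\alpha_+|_{[0,t^\star]}$ by $\alpha_-|_{[0,s^\star]}$ run backwards, obtaining a new embedded path from $\mu$ to $\nu_+$ through $\alpha_-(s^\star)$. Iterating or reapplying arc-in-path to the new concatenation reduces to the case $s^\star=0$, i.e.\ the only intersection is $\mu$.

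The principal obstacle is ensuring this splicing terminates with $\mu$ remaining in the interior of the final arc rather than being pruned away. The remedy is to shrink $U_N$: as the ambient neighborhood containing $\alpha_-$ shrinks toward $\{\mu\}$, so does the potential intersection locus with $\alpha_+$, and the splicing modifications all occur in a region shrinking to $\mu$; meanwhile the tail of $\alpha_+$ outside $U_M$ is untouched and ends at $\nu_+$. A compactness/diagonal argument using the nested basis $\{U_n\}$ from Section~6 guarantees that for $N$ large enough (depending on the fixed $\alpha_+$) the splicing produces two arcs meeting only at $\mu$. A final application of the arc-in-path theorem to the concatenation confirms it is an embedding through $\mu$ in the interior, yielding the desired $f:[-1,1]\to\EL$ with $f(0)=\mu$.
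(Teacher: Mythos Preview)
Your argument has a genuine gap at the splicing step, and the gap is not a missing detail but the whole content of the lemma. Concretely: after the last-meeting-time splice you describe, the new path $\alpha_+'$ consists of $\alpha_-|_{[0,s^\star]}$ followed by $\alpha_+|_{[t^\star,1]}$, so $\alpha_+'$ and $\alpha_-$ now share the entire segment $\alpha_-([0,s^\star])$, not just the point $\mu$. If you then apply the arc-in-path theorem to the concatenation $\alpha_-^{-1}*\alpha_+'$, the backtracking along $\alpha_-|_{[0,s^\star]}$ is pruned and the resulting embedded arc from $\nu_-$ to $\nu_+$ runs $\alpha_-^{-1}|_{[1,s^\star]}$ then $\alpha_+|_{[t^\star,1]}$, which misses $\mu$ entirely unless $s^\star=0$. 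Your ``compactness/diagonal'' assertion that shrinking $U_N$ eventually forces $s^\star=0$ is unsupported: path connectivity, local path connectivity, and Hausdorffness alone do not rule out that every short arc from $\mu$ must meet every other short arc from $\mu$ (the endpoint $0\in[0,1]$, or a leaf of a tree, satisfies all the hypotheses you invoke and admits no embedded arc with that point in its interior).

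The paper supplies exactly the missing separating invariant, and it is geometric rather than point-set topological: the \emph{crossing} relation on laminations. One begins in $\ML$, not $\EL$, taking a generic straight segment $f:[-1,1]\to\inte(V_\tau)$ through a measure on $\mu$, arranged transverse to every $\hat B_\alpha$ and with $f^{-1}(\{\text{measures on }\mu\})=\{0\}$. From this one deduces that for every $s<0<t$ the almost minimal sublaminations $f^*(s)$ and $f^*(t)$ have a point of transverse intersection. Lemma~\ref{effective connectivity} (and Proposition~\ref{super convergence}) are then used to push each half into $\EL$ while \emph{preserving the crossing condition against the other half}; after that Lemma~\ref{wilder} is applied on each side separately. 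The two resulting embedded half-arcs are automatically disjoint away from $\mu$, because laminations that cross are distinct elements of $\EL$. This crossing mechanism is what your argument lacks, and without it (or a substitute of comparable strength) the proof cannot be completed.
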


\begin{proof} Let $\lambda\in\ML$ be such that $\phi(\lambda)=\mu$.  Let $\sigma \subset \ML$ be the measures supported on $\mu$.  Let $\tau$ be a complete train track  such that $\mu\subset \inte(V_\tau)$.  By choosing a generic straight line path $f:[-1,1]\to \inte(V_\tau)\subset \ML$ with $f(0)=\lambda$ we obtain a path in $\AML$ transverse to each $\hat B_\alpha$ disjoint from each $\partial \hat B_\alpha$, such that  $\finv(\sigma)=\mu$ and $f(\partial [-1,1])\in \EL$.  Let $f^*(t)$ denote the almost minimal almost filling sublamination into $\phi(f(t))$.  

We say that two geodesic laminations \emph{cross} if they have a point of transverse intersection.  We show that if $t>0$ and $s<0$, then $f^*(t)$ crosses  $f^*(s)$.  If not, being almost minimal almost filling, they must coincide.   Since $f([0,1])$ is disjoint from the $\partial \hat B_\alpha$'s this implies that $\phi(f(t))=\phi(f(s))$.  Since $\phi(f(t))$ is carried by $V(\tau)$ the straight line segment from $f(s)$ to $f(t)$ consists of measures on the same underlying lamination.   Now either $f(t)\cup f(s)\subset \hat B_\alpha$, some $\alpha$ or $f^*(s)=f^*(t)\in \EL$.  The former contradicts the fact that $f$ is transverse to $\hat B_\alpha$, while the latter  contradicts the condition  $\finv(\sigma)=\mu$.

We next show that there exists a path $g:[0,1] \to \EL$ such that $g(0)=\mu$ and $g(1)=\phi(f(1))$ and for each $t>0$ and $s<0$, $g(t)$ crosses $f^*(s)$.  Let $1=t_0> t_1> \cdots$ be a sequence such that $\Lim t_i=0$ and for all $i$, $\phi(f(t_i))\in \EL$.  It follows from Lemma \ref{effective connectivity} that for  each $i\ge 1$, there exists a path $g_i:[t_{i},t_{i-1}]\to \EL$ with $g_i(t_j)=\phi(f(t_j))$, $j\in \{i,i-1\}$ such that  for $s<0$ and $t\in [t_i, t_{i-1}]$, $g_i(t )$ crosses $f^*(s)$.  Lemma \ref{effective connectivity} together with Proposition \ref{super convergence} and the minimality of $\mu$ implies that the $g_i$'s can be further chosen so that given a neighborhood basis $\{U_k\}$ of $\mu$ in $\EL$ and $n>0$, there exists $N(n)$ so that $i>N(n)$ implies that $g([t_i, t_{i-1}])\in U_N$.  By concatenating these $g_i$'s together and decreeing $g(0)=\mu$ we obtain the desired function $g$.

By Lemma \ref{wilder} stated below, there exists an embedded path $h:[0,1] \to g([0,1])\subset\EL$ from $\mu$ to $g(1)$.  Since each $h(t)$ crosses $\phi(f^*(s))$ for $s<0$, we can repeat this argument to produce  an embedded path $h:[-1,0] \to \EL$ from $\phi(f(-1))$ to $\mu$ such that if $s<0$ and $t>0$, then $h(s)$ crosses  $h(t)$.  It follows that $h:[-1,1]\to \EL$ is an embedded path through $\mu$.  \end{proof}

\begin{lemma}  \label{wilder} If $X$ is Hausdorff and $g:[0,1]\to X$ is continuous and $p,q\in g([0,1])$,  then there exists an embedded path in $g[0,1]$ from $p$ to $q$.\end{lemma}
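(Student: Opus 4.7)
Without loss of generality, assume $p\neq q$ (otherwise a constant path suffices). Choose $a,b\in[0,1]$ with $g(a)=p$, $g(b)=q$, $a<b$, and replace $g$ by $g|_{[a,b]}$ reparametrized, so we may assume $g(0)=p$ and $g(1)=q$. Set $Y:=g([0,1])$. Since $X$ is Hausdorff and $[0,1]$ is compact and connected, $Y$ is a compact connected subspace of the Hausdorff space $X$, i.e.\ a continuum. Moreover, $g:[0,1]\to Y$ is a continuous surjection from a compact space onto a Hausdorff space, hence a quotient map; because local connectedness passes to quotients and $[0,1]$ is locally connected, $Y$ is locally connected as well.

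The next step is to extract from $Y$ a subcontinuum that is irreducible between $p$ and $q$. I would apply Zorn's lemma to the collection $\mathcal{F}$ of subcontinua of $Y$ containing $\{p,q\}$, partially ordered by \emph{reverse} inclusion. Any chain $\{C_\alpha\}\subset\mathcal{F}$ has an upper bound given by $C:=\bigcap_\alpha C_\alpha$: the intersection contains $p,q$, is closed in the compact space $Y$, and is connected because the nested intersection of a chain of continua in a Hausdorff space is itself a continuum (a standard application of the finite intersection property to hypothetical separations). Zorn then produces a minimal $A\in\mathcal{F}$; by minimality, $A$ is irreducible between $p$ and $q$, in the sense that no proper subcontinuum of $A$ contains both points.

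Finally, I would show that such an $A$ is homeomorphic to $[0,1]$ with $p,q$ mapping to the endpoints, at which point composing with the inclusion $A\hookrightarrow g([0,1])$ yields the desired embedded path. This invokes the classical arc-characterization theorem: a locally connected continuum irreducible between two of its points is an arc between them. Concretely, one first verifies that $A$ inherits local connectedness from $Y$ (using the irreducibility to prevent pathologies of subcontinua), and then argues that every point $a\in A\setminus\{p,q\}$ must be a cut point separating $p$ from $q$ in $A$ — for if $A\setminus\{a\}$ had a single component containing both $p$ and $q$, its closure in $A$ would be a proper subcontinuum containing $\{p,q\}$, contradicting irreducibility. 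A locally connected continuum with exactly two non-cut points is then known to be homeomorphic to $[0,1]$ with the non-cut points as endpoints. I expect the main obstacle to be executing this last step cleanly in the purely Hausdorff (not necessarily metric) setting, which requires appealing to the general continuum-theoretic version of the arc characterization (cf.\ Kuratowski, \emph{Topology II}, §49) rather than the more familiar metric formulation.
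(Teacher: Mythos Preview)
The paper does not actually prove this lemma; it simply invokes Corollary~III.3.11 of Wilder's \emph{Topology of Manifolds}, which asserts that any two points of a Peano continuum (defined there as a Hausdorff continuous image of $[0,1]$) are joined by an arc.

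Your attempt at a direct argument via irreducible subcontinua has a genuine gap. The step ``one first verifies that $A$ inherits local connectedness from $Y$'' fails in general: an irreducible subcontinuum of a locally connected continuum need not be locally connected. Take $Y=[0,1]^2$ and let $A$ be the closure of the graph of $x\mapsto\tfrac12(1+\sin(1/x))$ for $0<x\le 1$, so that $A$ is this graph together with the segment $\{0\}\times[0,1]$. Projecting to the $x$-axis shows that any subcontinuum of $A$ containing both $p=(1,\tfrac12(1+\sin 1))$ and $q=(0,\tfrac12)$ must contain the entire graph, hence its closure $A$; thus $A$ is irreducible between $p$ and $q$, yet $A$ is not locally connected along $\{0\}\times[0,1]$. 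Your cut-point argument breaks on the same example: removing $a=(0,0)$ leaves $A\setminus\{a\}$ connected, but the closure in $A$ of that single component is all of $A$, not a \emph{proper} subcontinuum, so no contradiction with irreducibility arises and $a$ is a non-cut-point distinct from $p$ and $q$. Since Zorn's lemma hands you an arbitrary irreducible subcontinuum with no control over which one, it may well produce exactly such a pathological $A$. The standard proofs of arcwise connectedness of Peano continua (as in Wilder, Hocking--Young, or Kuratowski) avoid this trap by building the arc constructively from nested simple chains of small connected open sets rather than by first passing to an irreducible subcontinuum.
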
  

\begin{proof}  This is  Corollary III.3.11 \cite{Wi} which asserts that any two points of a Peano continuum $C$ are connected by an embedded path in C, where a Peano continuum is the image of a continuous map of $[0,1]$ into a Hausdorff space.  See III.1 and III.1.29 \cite{Wi} for this characterization of Peano continuum.\end{proof} 

\noindent\emph{Proof of Theorem \ref{cyclic}}.  Apply Lemma \ref{arc} to find an embedded path $h:[-1,1]\to \EL$ with $h(0)=\mu$.  Let $\lambda_t\in \ML$ be such that $\phi(\lambda_t)=h(t)$ for $t\in \{-1,1\}$.  Let $\tau$ be a complete train track such that $\mu\in \inte (V(\tau))$ and for $t\in \{-1,1\}$, $\lambda_t\notin V(\tau)$.  Since dim$(\ML)>2$ and $V(\tau)$ is a cone on a closed simplex, it follows that there exists a path $f:[-1,1]$ in $\ML$ from $\lambda_{-1}$ to $\lambda_1$ disjoint from $V(\tau)$.  After a small homotopy we can assume that  that $f$ is a PL almost filling path.  For each $t\in [-1,1]$, $f^*(t)$ crosses $\mu$ so by Lemma \ref{super convergence} there exists $\epsilon>0$ so that for $|s|<\epsilon, t\in [-1,1]$, $f^*(t)$ crosses $h(s)$.  The proof of Lemma \ref{arc} produces an embedded path $k:[-1,1]\to \EL$ from $h(-1)$ to $h(1)$ with this same property.  Concatenating this path with $h|[-1,-\epsilon] $ and $h|[\epsilon,1]$ we obtain a path $\kappa:[a,b]\to \EL$ from $h(-\epsilon)$ to $h(\epsilon)$ which intersects $h|[-\epsilon,\epsilon]$ only at its endpoints.  Apply Lemma \ref{wilder} to obtain an embedded path with the same properties.  Fusing this path with $h|[-\epsilon,\epsilon]$ we obtain the desired embedding of $S^1$ into $\EL$ passing through $\mu$.\qed

\begin{corollary} $\EL$ has no cut points.\end{corollary}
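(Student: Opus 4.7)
The plan is to show that $\EL\setminus\{\mu\}$ is path connected for every $\mu\in\EL$, which is equivalent to $\mu$ not being a cut point; path connectivity and connectivity coincide here because $\EL$ is locally path connected and $\{\mu\}$ is closed, so $\EL\setminus\{\mu\}$ inherits local path connectivity. Given $\lambda_0,\lambda_1\in\EL\setminus\{\mu\}$, I will construct a path from $\lambda_0$ to $\lambda_1$ lying in $\EL\setminus\{\mu\}$, directly adapting the bypass construction used at the end of the proof of Theorem \ref{cyclic}.

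First, I will lift $\lambda_0,\lambda_1$ to $\tilde\lambda_0,\tilde\lambda_1\in\ML$ and choose a complete train track $\tau$ with $\mu\in\inte V(\tau)$. By iterated splittings and Corollary \ref{splitting corollary}, $\tau$ can be taken so fine that $\hat\mE(\tau)$ is disjoint from $\lambda_0$ and $\lambda_1$ as subsets of $PT(S)$; in particular, $\tilde\lambda_i\notin V(\tau)$ for $i=0,1$. Since $V(\tau)$ is a cone on a closed simplex and $\dim\ML\geq 4>2$, the complement $\ML\setminus V(\tau)$ is path connected, so there is a path $e:[0,1]\to\ML\setminus V(\tau)$ from $\tilde\lambda_0$ to $\tilde\lambda_1$. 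Applying Lemma \ref{generic PL path} with perturbation size less than the distance from the compact set $e([0,1])$ to $V(\tau)$ yields a PL almost filling path $f:[0,1]\to\ML$ still contained in the open set $\ML\setminus V(\tau)$.

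Next, since $f(t)\notin V(\tau)$ gives $\phi(f(t))\neq\mu$, the almost minimal almost filling sublamination $f^*(t)$ is distinct from $\mu$. As a minimal lamination different from the filling minimal lamination $\mu$, the lamination $f^*(t)$ crosses $\mu$ transversely, and by Proposition \ref{super convergence} together with compactness of $[0,1]$ the crossing angles are uniformly bounded below by some $\theta_0>0$. I will then apply Lemma \ref{effective connectivity} to $f$ with $\epsilon$ chosen small (relative to $\theta_0$) to obtain a path $g:[0,1]\to\EL$ from $\lambda_0$ to $\lambda_1$ such that for each $s$ there exist $t_s$ and a diagonal extension $g(s)'$ of $g(s)$ with $f^*(t_s)\subset N_{PT(S)}(g(s)',\epsilon)$.

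The remaining step, which I expect to be the main obstacle, is to verify $g(s)\neq\mu$ for all $s$. If $g(s)=\mu$, then $g(s)'$ consists of $\mu$ together with at most finitely many added diagonal leaves. However, $f^*(t_s)$ has infinitely many transverse crossings with $\mu$ at angles at least $\theta_0$; only finitely many of these can lie close to the finitely many diagonal leaves, and at the remaining crossings the tangent direction of $f^*(t_s)$ differs by at least $\theta_0$ from the tangent direction of any nearby leaf of $\mu$, so these points cannot be $\epsilon$-approximated in $PT(S)$ by points of $g(s)'$. For $\epsilon$ chosen sufficiently small this contradicts $f^*(t_s)\subset N_{PT(S)}(g(s)',\epsilon)$, forcing $g([0,1])\subset\EL\setminus\{\mu\}$. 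This quantitative balancing closely parallels the concluding argument of the proof of Theorem \ref{cyclic}.
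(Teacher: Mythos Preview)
Your proof is correct and is precisely the argument the paper intends: the corollary is stated without proof, but the construction you carry out---choosing a complete $\tau$ with $\mu\in\inte V(\tau)$, taking a PL almost filling path in $\ML\setminus V(\tau)$, and applying Lemma~\ref{effective connectivity}---is exactly what the proof of Theorem~\ref{cyclic} does to obtain a path avoiding $\mu$, and the paper says explicitly in the proof of Theorem~9.5 that ``the method of proof of Theorem~\ref{cyclic} shows how to construct a path from $\sigma_0$ to $\sigma_1$ disjoint from $\mu$.''  One small imprecision in your final step: the claim that ``only finitely many'' crossings can lie close to the diagonal leaves is not quite the right formulation---in fact for $\epsilon$ small enough \emph{none} of the crossing points $(x,v_f)\in PT(S)$ lie within $\epsilon$ of any diagonal, since the compact part of each diagonal not already $PT(S)$-close to $\mu$ sits in the interior of a complementary region of $\mu$ and hence at positive $S$-distance from every $x\in\mu$; this only makes your argument cleaner.
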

\section{questions}

\begin{question}  \label{question 1}If $\lambda$ and $\lambda^\prime \in \PML$ satisfy $\phi(\lambda)$, $\phi(\lambda^\prime)\in \EL$, is there 
a filling path in $\PML$ connecting them?\end{question}

\begin{question}  Is $\EL$ n-connected if S has sufficiently high 
complexity?\end{question}

\newpage

\section{Applications} 

An unsuccessful attempt to find a positive solution to Question \ref{question 1} led to the interesting study of how various  $B_\beta$'s can intersect a given $B_\alpha$ which in turn led to a new construction of non uniquely ergodic measured laminations in any finite type surface S not the 1-holed torus, or 3 or 4-holed sphere.

\begin{theorem}\label{uniquely ergodic}   If $\alpha_1, \cdots, \alpha_n$  are pairwise disjoint simple closed geodesics in the complete, hyperbolic, finite type, orientable surface $S$, and for each $i$, $U_i$ is an open set in $\PML$ about $\alpha_i$ then there exists a (n-1)-simplex $\Sigma$ in $\PML$ representing a filling non uniquely ergodic measured lamination with the endpoints of $\Sigma$ respectively in $U_1, \cdots, U_n$.   \end{theorem}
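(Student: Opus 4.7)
The plan is to run the iterative refinement of $\S 5$ on an $(n-1)$-simplex of measured laminations rather than on a single path, arranging that the $n$ vertices stay in the prescribed neighborhoods $U_1,\dots,U_n$ while the interior is almost filling at each stage and $\phi$-converges to a single filling minimal lamination $\mL\in\EL$; the $n$ limit measures will then be projectively distinct transverse measures on $\mL$, so $\mL$ is non-uniquely ergodic and the projective simplex they span is the required $\Sigma$. Since the $\alpha_i$ are pairwise disjoint they extend to a pants decomposition; let $\tau_1$ be a transversely recurrent standard train track for this decomposition, which carries each $\alpha_i$. Choose positive scalings $\lambda_i^{(1)}\in V(\tau_1)\cap\hat B_{\alpha_i}$ of the counting measure on $\alpha_i$ with $[\lambda_i^{(1)}]\in U_i$, so that the $(n-1)$-simplex $\Sigma_1\subset V(\tau_1)\subset\ML$ they span is an initial candidate. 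Enumerate the simple closed geodesics of $S$ in order of length as $C_1,C_2,\dots$ and fix a summable sequence $\epsilon_k\to 0$ with $\epsilon_{k+1}<\epsilon_k/2$.

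Inductively build $\Sigma_{k+1}\subset V(\tau_{k+1})$, with $\tau_{k+1}$ an unzipping of $\tau_k$, by perturbing $\Sigma_k$ to miss $\hat B_{C_k}$ exactly as in the path-connectivity construction: the simplex $\Sigma_k$ meets $\hat B_{C_k}$ in finitely many convex codimension-one pieces (Proposition~\ref{PL}), and at each intersection apply the escape-ray construction of Definition~\ref{escape ray}, then take a generic PL approximation in the sense of Lemma~\ref{PL path} to avoid the higher-codimension loci $\hat B_{C_j,C_m}$ with $j,m\le k$. Localize each escape-ray perturbation to a small neighborhood of its intersection site, well away from the vertices, and use Proposition~\ref{super convergence} to bound the perturbation so that $\lambda_i^{(k+1)}$ is within $\epsilon_k$ of $\lambda_i^{(k)}$ in $\ML$ (hence the $PT(S)$-images of their $\phi$-projections are close), so that $[\lambda_i^{(k+1)}]$ still lies in $U_i$, and so that the filling angle-and-spacing hypothesis of Lemma~\ref{path criterion} with respect to each $C_j$ for $j\le k$ is preserved, using the same choice of constants as the passage from $f_2$ to $f_3$ in $\S 5$.

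Summability of the $\epsilon_k$ makes each vertex sequence $\{\lambda_i^{(k)}\}$ Cauchy in $\ML$ with a limit $\mu_i$ satisfying $[\mu_i]\in U_i$ (after slightly shrinking $U_i$ at the outset). A simplex-parametrized version of Lemma~\ref{path criterion}, whose sublimit and filling hypotheses have been built into the construction above, yields that the weak Hausdorff limit $\phi(\mu_i)$ is the same minimal filling lamination $\mL\in\EL$ for every $i$, and indeed the entire interior of $\Sigma_k$ converges to $\mL$. Because the $U_i$ are disjoint neighborhoods of the projectively distinct classes $[\alpha_i]$, the $[\mu_i]$ are $n$ projectively distinct transverse measures on $\mL$; the projective simplex they span is the desired non-degenerate $(n-1)$-simplex $\Sigma$, and $\mL$ is non-uniquely ergodic. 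The main technical obstacle is the balancing act in the iteration step---keeping each $[\lambda_i^{(k)}]$ in $U_i$ (a local condition near $[\alpha_i]$) while driving the interior of the simplex through $\AML$ and in the limit onto a single ending lamination (a global condition unifying the $n$ vertex sequences)---which is resolved by Proposition~\ref{PL}, which makes sufficiently small generic perturbations available via the codimension bounds, together with Proposition~\ref{super convergence}, which gives fine control of the $PT(S)$-distance between $\phi(\Sigma_k)$ and its limit.
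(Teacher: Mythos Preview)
Your approach diverges substantially from the paper's and has a genuine gap at the very first step. You begin with the simplex $\Sigma_1$ spanned by the $\alpha_i$; but every point of $\Sigma_1$ is a weighted multicurve $\sum t_i\alpha_i$ supported on pairwise disjoint simple closed curves, hence is nowhere close to almost filling. The escape-ray move of Definition~\ref{escape ray} presupposes a point $x\in\inte(\hat B_\alpha)$ with $\phi(x)\in\AML$, and the entire \S5 iteration is designed to refine an \emph{already almost filling} object; neither applies to $\Sigma_1$. Moreover $\Sigma_1\subset\bigcap_i\hat B_{\alpha_i}$, so when $C_k$ equals some $\alpha_j$ you must push the whole simplex---including every vertex---off $\hat B_{\alpha_j}$; this cannot be a ``small local perturbation well away from the vertices.'' A second problem: for $n\ge 3$ the simplex is $(n{-}1)$-dimensional, and the codimension-$2$ bound for the $\hat B_{\alpha,\beta}$'s in Proposition~\ref{PL} no longer lets a generic PL $(n{-}1)$-simplex miss them. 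The dimension count behind Lemma~\ref{PL path} is specific to $1$-dimensional paths, and for $n\ge 3$ the escape-ray construction would have to be applied along an $(n{-}2)$-dimensional intersection locus rather than at isolated points, which you do not address.

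The paper's proof is far more elementary and does not invoke the \S5 machinery at all. For $n=2$ (the general case being declared similar) it builds by hand two interleaved sequences of simple closed curves $a_1,b_1,a_2,b_2,\dots$ with $a_i\cap b_i=\emptyset=a_{i+1}\cap b_i$, each obtained from its predecessor by a high-power Dehn twist, so that $a_i\to\lambda_\alpha\in U_\alpha$ and $b_i\to\lambda_\beta\in U_\beta$ in $\PML$. Continuity of intersection number forces $i(\lambda_\alpha,\lambda_\beta)=0$; by choosing each new curve to hit the next $C_j$ in the enumeration one ensures $\phi(\lambda_\alpha)\in\EL$, whence $\phi(\lambda_\alpha)=\phi(\lambda_\beta)$, and the projected segment between $\lambda_\alpha$ and $\lambda_\beta$ is the desired $\Sigma$. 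The only idea borrowed from earlier sections is exhausting the $C_j$'s to guarantee filling; there are no escape rays, no almost-filling simplices, and no appeal to Lemma~\ref{path criterion}.
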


\begin{proof}  In the statement of the theorem as well as in the proof, simple closed curves are naturally identified with points in $\PML$.  

We provide the argument for the case n=2.  The proof in the general case is similar.  Let $\alpha$ and $\beta$ (resp. $U_\alpha$ and $U_\beta$) denote $\alpha_1$ and $\alpha_2$ (resp. $U_1$ and $U_2$).  Let $a_1$ be a simple closed geodesic disjoint from $\beta$ and close 
to $\alpha$.   For example, let $a^\prime_1\neq\alpha$ be any geodesic 
disjoint from $\beta$ which intersects $\alpha$ and let $a_1$ be obtained 
from $a^\prime_1$ by doing a high power Dehn twist to $\alpha$.    Let 
$b_1$ a simple closed geodesic disjoint from $a_1$ and very close to 
$\beta$.  Let  $a_2$ a simple closed geodesic disjoint from $b_1$ and 
extremely close to $a_1$ and let $b_2$ a simple closed geodesic 
disjoint from $a_2$ and super close to $b_1$.  Continuing in 
this manner we obtain the sequence $a_1, a_2, a_3, \cdots$ which 
converges to $\lambda_\alpha$ and the sequence $b_1, b_2, \cdots$ 
converges to $\lambda_\beta$ where $\lambda_\alpha, \lambda_\beta
$ respectively lie in $U_\alpha$ and $U_\beta$.    

Since intersection number is continuous on $\ML$, and for each $i$,  $a_i$ and $b_i$ are pairwise disjoint it follows that $i(\lambda_\alpha,\lambda_\beta)=0$.  We now show that with a little care in the choice of the $a_i$'s and $b_i$'s  that $\phi(\lambda_\alpha)=\phi(\lambda_\beta)\in \EL$.   Let $V_\alpha\subset U_\alpha$ and $V_\beta\subset U_\beta$ be closed neighborhoods of $\alpha$ and $\beta$.  We argue as in the proof of Lemma \ref{path criterion}. 

Recall that $C_1, C_2, \cdots$ is   an enumeration of the simple closed geodesics  in  $S$.  If possible choose $a_1$ to intersect $C_1$ transversely.  We can always do that  and have $a_1$ as close to $\alpha$ as we like, unless $C_1=\beta$ or $\beta$ separates $C_1$ from $\alpha$.  If we must have $a_1\cap C_1=\emptyset$, then we can choose $b_1\in \inte(V_\beta)$ so that $b_1\cap C_1\neq\emptyset$ and hence in any case, $a_2$ can be chosen so that $a_2\cap C_1\neq \emptyset$.  Let $V_2\subset V_\alpha$ a closed neighborhood of $a_2$ in $\PML$ so that if $x\in V_2$, then $x$ and $C_1$ have non trivial intersection number.  As above, choose $b_2,\ a_3,\ b_3,\ a_4$ so that $a_4\cap C_2\neq\emptyset,\ a_3\cup a_4\subset \inte(V_2)$ and $b_2\cup b_3\subset \inte(V_\beta)$.  Choose $V_4\subset V_2$ a closed neighborhood of $a_4$ so that $x\in V_4$ implies that $x$ and $C_2$ have nontrivial intersection number.  Inductively construct $b_4, b_5,\cdots,\ a_5, a_6,\cdots$.  If $\lambda_\alpha=\Lim a_i$, then for all $j,\ i(\lambda_\alpha, C_j)\neq 0$ and hence $\phi(\lambda_\alpha)\in \EL$.  By construction $\lambda_\alpha\in U_\alpha$ and $\lambda_\beta=\Lim b_i\in U_\beta$.  Since $\lambda_\alpha\in \EL$ and has trivial intersection number with $\lambda_\beta$, it follows that $\phi(\lambda_\alpha)=\phi(\lambda_\beta)$.  The projection to $\PML$ of a straight line segment in $\ML$ connecting representatives of $\lambda_\alpha$ and $\lambda_\beta$  gives the desired 1-simplex $\Sigma$.\end{proof}

\vskip 8pt

In \cite{RS} Kasra Rafi and Saul Schleimer derived a number of important rigidity results about a surface $S$ and its curve complex $\CS$ under the assumption that $\CS$ is connected.  In particular an immediate consequence of Theorem 7.1 and Corollary 1.2 of  \cite{RS} and Theorem \ref{main} is the

\begin{theorem} \label{simplicial} Let $S$ be a finite type orientable surface of negative Euler chacteristic which is not the 3-holed sphere, 4-holed sphere or 1-holed torus.  Every quasi-isometry of $\CS$ is bounded distance from a simplicial automorphism of $\CS$.  Consequently, QI$(\CS)$ the group of quasi-isometries of $\CS$ is isomorphic to Aut$(\CS)$ the group of simplicial automorphisms.  \end{theorem}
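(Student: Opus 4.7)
The plan is to deduce this theorem directly from results of Rafi and Schleimer \cite{RS} once connectedness of the Gromov boundary of $\CS$ is established. The first step is to combine Theorem \ref{main} with Klarreich's theorem \cite{K}: Klarreich proved that $\partial \CS$ is canonically homeomorphic to $\EL$, so the connectedness of $\EL$ supplied by Theorem \ref{main} transports to give connectedness of $\partial \CS$ for every surface $S$ satisfying our hypotheses. This verifies the standing hypothesis on which the Rafi--Schleimer rigidity framework rests.

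With $\partial \CS$ connected, the first assertion follows immediately by invoking Theorem 7.1 of \cite{RS}: every quasi-isometry $\CS \to \CS$ lies at bounded distance from a simplicial automorphism of $\CS$. For the second assertion, I would appeal to Corollary 1.2 of \cite{RS}, which upgrades the bounded-distance equivalence to the group isomorphism QI$(\CS) \cong$ Aut$(\CS)$ via the natural map sending a simplicial automorphism to its class in QI$(\CS)$; surjectivity of this map is the first assertion, and injectivity is part of the \cite{RS} statement.

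There is no genuine obstacle in the present argument: the substantive mathematics has already been carried out, partly by Rafi--Schleimer and partly by Theorem \ref{main} of this paper. The only work to do here is to route $\EL$-connectedness through Klarreich's identification and feed the resulting connectedness of $\partial \CS$ into the rigidity theorems of \cite{RS}. Thus Theorem \ref{simplicial} is a clean corollary of \cite{K}, \cite{RS}, and Theorem \ref{main}, and the real content has been relocated to the proof of Theorem \ref{main} already given.
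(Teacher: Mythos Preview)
Your proposal is correct and matches the paper's own treatment: the paper states this theorem as an immediate consequence of Theorem 7.1 and Corollary 1.2 of \cite{RS} together with Theorem \ref{main}, which is exactly the route you describe (with Klarreich's identification $\partial\CS\cong\EL$ made explicit, as the paper does earlier in the introduction).
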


An immediate consequence of Theorem \ref{main} and the quasi-isometric rigidity Theorem 1.4 of Rafi -- Schleimer \cite{RS} is the

\begin{theorem}  \label{quasi} Let $S$ be a finite type orientable surface of negative Euler chacteristic which is not the 3-holed sphere, 4-holed sphere or 1-holed torus.  Suppose that neither $S$ nor $\Sigma$ is the surface of genus-2 or the twice punctured surface of genus-1, then $S$ and $\Sigma$ are homeomorphic if and only if $\CS$ is quasi-isometric to $\mC(\Sigma)$.\end{theorem}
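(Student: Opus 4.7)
The plan is to derive Theorem \ref{quasi} by combining the main theorem of this paper with the quasi-isometric rigidity result of Rafi--Schleimer, so my role here is really to trace how the hypotheses line up. The \emph{only if} direction is immediate: if $h\colon S\to\Sigma$ is a homeomorphism, then $h$ induces a simplicial isomorphism $\CS\to\mC(\Sigma)$ by sending the isotopy class of a simple closed curve $\gamma$ to the isotopy class of $h(\gamma)$, and a simplicial isomorphism of locally finite simplicial graphs gives an isometry in the standard (edge-length one) path metric, which is in particular a quasi-isometry.

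For the \emph{if} direction, I would invoke Theorem 1.4 of \cite{RS}, which asserts quasi-isometric rigidity of curve complexes under the standing assumption that the Gromov boundaries of the curve complexes in question are connected, and under the exclusion of the genus-2 closed surface and the genus-1 surface with two punctures (these being the exceptional cases in the Rafi--Schleimer setup where coincidences among curve complexes occur). By Klarreich's theorem \cite{K}, $\partial\CS$ is homeomorphic to $\EL$, and similarly for $\Sigma$. Therefore to apply the Rafi--Schleimer theorem we need to know that $\EL$ and $\mE\mL(\Sigma)$ are connected.

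This is exactly the content of Theorem \ref{main}, whose hypothesis is that the surface in question is finite type, orientable, of negative Euler characteristic, and is not the 3-holed sphere, 4-holed sphere, or 1-holed torus. Both $S$ and $\Sigma$ satisfy this hypothesis by assumption (note the hypothesis on $\Sigma$ is implicit once we know $\mC(\Sigma)$ is quasi-isometric to $\CS$, since for the degenerate surfaces $\CS$ is either empty/trivial or totally disconnected at infinity, and quasi-isometry between them and the complexes of the nondegenerate surfaces can be ruled out by the connectedness of $\EL$). So Theorem \ref{main} provides connectedness of both boundaries, and the additional hypothesis in the statement of Theorem \ref{quasi} supplies precisely the excluded surfaces of Rafi--Schleimer's theorem.

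There is no essential obstacle here: the entire content is bookkeeping of the hypotheses. The one point requiring a moment of care is the sanity check that the excluded surfaces in the statement of Theorem \ref{quasi} (genus-2 closed and the twice-punctured torus) are exactly the list appearing in \cite[Theorem 1.4]{RS}, so that the conclusion we invoke---$S$ and $\Sigma$ are homeomorphic---is indeed exactly what Rafi--Schleimer deliver under a quasi-isometry $\CS\to\mC(\Sigma)$ once the boundary connectivity hypothesis is verified via Theorem \ref{main}.
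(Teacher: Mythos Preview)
Your proposal is correct and follows exactly the paper's own approach: the paper states the theorem as ``an immediate consequence of Theorem~\ref{main} and the quasi-isometric rigidity Theorem~1.4 of Rafi--Schleimer~\cite{RS}'' without further elaboration, so your write-up is in fact more detailed than the original, spelling out the easy direction and the role of Klarreich's identification $\partial\CS\cong\EL$.
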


\vskip 10 pt

In \cite{LS} Chris Leininger and Saul Schleimer give a detailed proof of the following Theorem \ref{DD} which they say "seems to be well known", that is a consequence of many deep results in Kleinian group theory.  Using their notation, the space of \emph{doubly degenerate Kleinian surface} groups is denoted by DD$(S, \partial S)$ and equals $\{M\in \text{AH}(S,\partial S)|\mE(M)\in \EL\times \EL\}$.  Here $\mE(M)$ is the end invariant of the doubly degenerate group $M$ and is an element of $\EL\times \EL-\Delta$, where $\Delta$ is the diagonal.  

\begin{theorem}  \label{DD} (Theorem 6.5 \cite{LS}) $\E:$DD$(S,\partial S)\to \EL\times \EL\setminus\Delta$ is a homeomorphism.\end{theorem}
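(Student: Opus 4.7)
The plan is to show that $\mE$ is a bijection and that both $\mE$ and $\mE^{-1}$ are continuous, invoking the deep classification of doubly degenerate Kleinian surface groups.

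First I would establish injectivity of $\mE$ by appealing to the Ending Lamination Theorem of Brock, Canary and Minsky: a finitely generated Kleinian group without accidental parabolics is determined up to conjugacy by its topological type together with its end invariants. Restricted to $\text{DD}(S,\partial S)$, which by definition consists of representations whose end invariants lie in $\EL\times\EL\setminus\Delta$, this immediately gives that $\mE(M)=\mE(M')$ implies $M=M'$ in $\text{AH}(S,\partial S)$.

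Next I would establish surjectivity. Given $(\lambda_+,\lambda_-)\in\EL\times\EL\setminus\Delta$, approximate each $\lambda_\pm$ by a sequence of simple closed geodesics $\gamma_n^\pm$ with $\gamma_n^\pm\to\lambda_\pm$ in $\PML$. Form the quasi-Fuchsian representations $Q(X_n^+,X_n^-)$ via Bers simultaneous uniformization with $X_n^\pm$ chosen to pinch along $\gamma_n^\pm$. Thurston's Double Limit Theorem, in the version for two distinct filling laminations, yields after passing to a subsequence an algebraic limit $M\in\text{AH}(S,\partial S)$. Bonahon's tameness together with continuity of length functions and the explicit nature of the approximating data then forces $M$ to be doubly degenerate with $\mE(M)=(\lambda_+,\lambda_-)$.

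Continuity of $\mE$ follows from Minsky's model manifold and the bilipschitz model theorem: a convergent sequence $M_n\to M$ in $\text{DD}(S,\partial S)$ forces the short curves of $M_n$ exiting each end to converge, in the topology on $\EL$ provided by Theorem \ref{main}, to the corresponding lamination of $M$. For continuity of $\mE^{-1}$ one exploits that the block decomposition of the Minsky model depends continuously on the pair of end invariants, and then transports this geometric continuity back to $\text{AH}$ via the bilipschitz model theorem.

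The main obstacle will be continuity of $\mE^{-1}$. Injectivity and surjectivity reduce to named theorems, but continuity in the inverse direction requires genuinely quantitative control: a small move in $\EL\times\EL$ must induce a small move in $\text{AH}$. This is delicate because the ending lamination map is notoriously not continuous on all of $\text{AH}(S,\partial S)$, and its restriction to the doubly degenerate locus becomes a homeomorphism only once $\EL$ is equipped with the weak Hausdorff topology. Thus Theorem \ref{DD} genuinely depends on the topological content of Theorem \ref{main}, not merely on the set-theoretic classification of doubly degenerate Kleinian surface groups.
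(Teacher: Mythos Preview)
The paper does not give its own proof of this statement: it is quoted verbatim as Theorem 6.5 of \cite{LS}, with the remark that Leininger and Schleimer ``give a detailed proof of the following Theorem \ref{DD} which they say `seems to be well known', that is a consequence of many deep results in Kleinian group theory.'' So there is no in-paper argument to compare against; the result is imported wholesale.

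Your outline of injectivity via the Ending Lamination Theorem and surjectivity via the Double Limit Theorem together with tameness is the standard route and is in the spirit of what \cite{LS} assembles. The continuity discussion is vaguer but at least points at the right ingredients.

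There is, however, a genuine error in your final paragraph. Theorem \ref{DD} does \emph{not} depend on Theorem \ref{main}. Theorem \ref{main} is about the \emph{connectivity} and local path connectivity of $\EL$; it says nothing about which topology $\EL$ carries. The weak Hausdorff topology on $\EL$ (equivalently, the Gromov boundary topology on $\partial\CS$) is fixed by the work of Klarreich and Hamenst\"adt cited in \S1, and Theorem \ref{DD} is stated and proved in \cite{LS} relative to that topology, with no input from the present paper. The logical flow here is the reverse of what you claim: Theorem \ref{DD} is an external input, and it is combined with Theorem \ref{main} to deduce Corollary \ref{DD connectivity} on the connectivity of DD$(S,\partial S)$. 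So your sentence ``Theorem \ref{DD} genuinely depends on the topological content of Theorem \ref{main}'' should be deleted; if anything, it is the corollary, not Theorem \ref{DD}, that depends on Theorem \ref{main}.
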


\begin{theorem}  If $S$ is a finite type hyperbolic surface, not the once 
punctured torus or 3 or 4 times punctured sphere, then $\EL\times \EL\setminus \Delta$ is connected, locally path connected and cyclic.
\end{theorem}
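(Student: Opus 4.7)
The plan is to inherit each of the three properties from the corresponding property of $\EL$ supplied by Theorem \ref{main}, using Hausdorffness of $\EL$, the product structure, and the fact (corollary immediately following Theorem \ref{cyclic}) that $\EL$ has no cut points. \emph{Local path connectedness}: given $(x,y)\in\EL\times\EL\setminus\Delta$ in an open set $W$, Hausdorffness together with $x\neq y$ supplies disjoint basic opens $U\ni x$, $V\ni y$ with $U\times V\subset W$; shrinking each factor to a path connected open subset (via local path connectedness of $\EL$) yields a path connected open neighborhood of $(x,y)$ in $W$ automatically disjoint from $\Delta$. \emph{Path connectedness}: no cut points plus local path connectedness of $\EL$ imply $\EL\setminus\{p\}$ is path connected for each $p$; given $(x_1,y_1)$ and $(x_2,y_2)$ I choose $z\in\EL\setminus\{x_1,x_2,y_1,y_2\}$ (possible since $\EL$ contains an embedded arc by Lemma \ref{arc} and is therefore uncountable) and concatenate $(x_1,y_1)\to(z,y_1)$ inside $(\EL\setminus\{y_1\})\times\{y_1\}$, then $(z,y_1)\to(z,y_2)$ inside $\{z\}\times(\EL\setminus\{z\})$, then $(z,y_2)\to(x_2,y_2)$ inside $(\EL\setminus\{y_2\})\times\{y_2\}$; each segment avoids $\Delta$ by construction.

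\emph{Cyclicity}. Given $(x,y)\in\EL\times\EL\setminus\Delta$, my plan is to produce disjoint embedded arcs $\alpha_x,\alpha_y\subset\EL$ with $x$ in the interior of $\alpha_x$ and $y$ in the interior of $\alpha_y$. The product $\alpha_x\times\alpha_y\subset\EL\times\EL$ is then an embedded $2$-cell, disjoint from $\Delta$ because $\alpha_x\cap\alpha_y=\emptyset$, and containing $(x,y)$ in its interior; any small embedded Jordan curve through $(x,y)$ inside this $2$-cell provides an embedded $S^1$ in $\EL\times\EL\setminus\Delta$ through $(x,y)$, which is what is required. To get the arcs disjoint I choose disjoint open neighborhoods $U\ni x$, $V\ni y$ in $\EL$ and apply Lemma \ref{arc} separately to $x$ and to $y$, with the train tracks $\tau$ chosen sufficiently deep in an unzipping sequence so that the resulting arcs lie in $U$ and $V$ respectively.

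The main obstacle is this confinement step: Lemma \ref{arc} as stated does not explicitly guarantee that the embedded arc through $\mu$ can be placed inside an arbitrarily prescribed $\EL$-open neighborhood of $\mu$. I would establish this refinement by rerunning the proof of Lemma \ref{arc} with the effective control supplied by Lemma \ref{effective connectivity} and Corollary \ref{splitting corollary}: taking $\tau_j$ sufficiently deep in the unzipping sequence ensures $\hat\mE(\tau_j)\subset N_{PT(S)}(\mu,\epsilon)$, so the straight-line path in $V(\tau_j)$ and its Lemma \ref{effective connectivity}-replacement both land in a prescribed $\EL$-neighborhood of $\mu$, exactly as in the local path connectedness argument of \S 5. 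Once this localized version of Lemma \ref{arc} is in hand, the remainder of the argument is formal.
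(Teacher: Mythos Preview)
Your argument is correct. Local path connectedness is handled the same way in both (the paper just says ``immediate''). For path connectedness and cyclicity the paper takes a different route: it asserts that the \emph{method of proof} of Theorem~\ref{cyclic} yields, for any distinct $\sigma_0,\sigma_1,\mu\in\EL$, a path from $\sigma_0$ to $\sigma_1$ in $\EL$ avoiding $\mu$ and an embedded circle $C\subset\EL$ through $\sigma_0$ avoiding $\mu$; then $C\times\{\mu\}$ gives cyclicity and one-factor moves give path connectedness. Your path-connectedness argument is more elementary, deducing path connectedness of $\EL\setminus\{p\}$ purely from the no-cut-points corollary plus local path connectedness (a connected open subset of a locally path connected space is path connected), without reopening the lamination-theoretic constructions. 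For cyclicity your product-of-arcs $2$-cell is a valid alternative; the localized version of Lemma~\ref{arc} you describe is correct and is indeed the \S6 mechanism, but you can avoid it altogether: given any Lemma~\ref{arc} arc $h$ through $x$, shrink to $\alpha_x=h([-\epsilon,\epsilon])$ missing the single point $y$ (possible since $h$ is an embedding and $h(0)=x\neq y$), and then, since $\alpha_x$ is compact, $y\notin\alpha_x$, and $\EL$ is Hausdorff, shrink any Lemma~\ref{arc} arc through $y$ to an $\alpha_y$ disjoint from $\alpha_x$.
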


\begin{proof}  Local path connectivity is immediate.  Let $\sigma_0, \sigma_1,\mu$ be distinct elements of  $\EL$.  The method of proof of Theorem \ref{cyclic} shows how to construct a path from $\sigma_0$ to $\sigma_1$ disjoint from $\mu$ and an embedded circle through $\sigma_0$ disjoint from $\mu$.  Using this it is routine to show path connectivity and cyclicity.\end{proof}

\begin{corollary}  \label{DD connectivity} DD$(S,\partial S)$ is connected, path connected and cyclic if $S$ is a compact hyperbolic surface that is not the sphere with 3 or 4 open discs removed or the torus with an open disc removed.\qed\end{corollary}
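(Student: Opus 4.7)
The plan is simply to transport topological properties across the homeomorphism of Theorem \ref{DD}. That theorem provides a homeomorphism
\[
\E: \text{DD}(S,\partial S) \longrightarrow \EL \times \EL \setminus \Delta,
\]
so any topological property of the target preserved by homeomorphisms passes automatically to the source. The immediately preceding theorem in this section establishes that, under exactly the hypotheses on $S$ in the corollary, the space $\EL \times \EL \setminus \Delta$ is connected, locally path connected, and cyclic. Combining these two statements essentially finishes the argument; the only content of the corollary beyond a one-line invocation is the observation that each of the three properties in question is indeed a topological invariant.

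Concretely, I would proceed as follows. First, connectedness of $\text{DD}(S,\partial S)$ is immediate since continuous images of connected spaces are connected and $\E^{-1}$ is continuous. Second, for path connectedness, I would note that the preceding theorem gives both connectedness and local path connectedness of $\EL \times \EL \setminus \Delta$, and any connected, locally path connected space is path connected; transporting back via $\E^{-1}$ yields path connectedness of $\text{DD}(S,\partial S)$. Third, cyclicity is the statement that every point lies on an embedded circle in the ambient space, which is a purely topological property: given $M \in \text{DD}(S,\partial S)$, apply Theorem \ref{cyclic}-style cyclicity at $\E(M) \in \EL \times \EL \setminus \Delta$ to obtain an embedded $S^1$ through $\E(M)$, and pull it back through $\E^{-1}$ to an embedded $S^1$ in $\text{DD}(S,\partial S)$ passing through $M$.

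There is no substantive obstacle here; the hard work has already been carried out in Theorem \ref{main} (and its refinements to $\EL \times \EL \setminus \Delta$) together with the deep Kleinian group theory packaged in Theorem \ref{DD}. The only thing to be slightly careful about is that one uses the full strength of the previous theorem—in particular local path connectedness, not just path connectedness—since the corollary's assertion of path connectedness combined with its other hypotheses is most cleanly obtained by transporting local path connectedness and then invoking the standard implication. Finally, the exclusion of the three exceptional surfaces is inherited directly from the hypothesis of the previous theorem, which in turn inherits it from Theorem \ref{main}.
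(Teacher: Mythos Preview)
Your proposal is correct and is exactly the argument the paper has in mind: the corollary is marked with \qed\ immediately after the statement, so the paper's ``proof'' is precisely to combine the homeomorphism of Theorem~\ref{DD} with the preceding theorem on $\EL\times\EL\setminus\Delta$. Your observation that path connectedness (as stated in the corollary) follows from the connectedness and local path connectedness established in the preceding theorem is the one small wrinkle worth noting, and you handle it correctly.
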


\newpage

\enddocument
\begin{thebibliography}{CEG} 

\bibitem[B]{B} B. Bowditch, \emph{Intersection numbers and the hyperbolicity of 
the curve complex}, J. Reine Angew. Math., \textbf{598} (2006), 105--129.



\bibitem[CB]{CB} A. Casson, S. Bleiler, \emph{Automorphisms of surfaces after 
Nielson and Thurston}, London Math. Soc. Student Texts, Cambridge U. Press, 
\textbf{9} (1988).

\bibitem[CEG]{CEG} R. Canary, D. Epstein \& P. Green, \emph{Notes on notes of 
Thurston}, Analytical and geometric aspects of hyperbolic space, D. 
Epstein, editor, London Math. Soc. Lecture Notes, \textbf{111} (1987).

\bibitem[FLP]{FLP} A. Fathi, F. Laudenbach \& V. Poenaru, \emph{Travaux de 
Thurston sur les surfaces}, Asterisque, \textbf{66-67}(1991). 

\bibitem[GT]{GT} W. Goldman \& W. Thurston, \emph{Lecture notes from Boulder}, 1981 preprint.




\bibitem[H1]{H1} U. Hamenstadt, \emph{Train tracks and the Gromov boundary of the 
complex of curves},  Spaces of Kleinian groups, London Mathematical 
Society Lecture Notes, \textbf{329} (2006), 187--207.

\bibitem[H2]{H2} \bysame, \emph{Train tracks and mapping class groups I}, 
preprint.



\bibitem[Ha]{Ha} W. Harvey, \emph{Boundary structure of the modular group}, Riemann surfaces and related topics, Annals of Math. Studies, 
\textbf{97}(1981), 245--252.

\bibitem[HT]{HT} A. Hatcher \& W. Thurston, \emph{A presentation for the mapping class group of a closed orientable surface}, Topology, \textbf {19} (1980), 23--41.



\bibitem[K] {K} E. Klarrich, \emph{The boundary at infinity of the curve complex 
and the relative mapping class group}, preprint.


\bibitem[KL]{KL} R. Kent IV \& C. Leininger, \emph{Shadows of mapping class 
groups:  capturing convex cocompactness}, to appear in GAFA.  

\bibitem[LS]{LS} C. Leininger \& S. Schleimer, \emph{Connectivity of the space of 
ending laminations}, preprint.

\bibitem[LMS]{LMS} C. Leininger, M, Mj \& S. Schleimer, \emph{Cannon Thurston 
maps and the curve complex}, in preparation.

\bibitem[M]{M} L. Mosher, \emph{Train track expansions of measured foliations}, 
preprint.

\bibitem[MM]{MM} H. Masur \& Y. Minsky, \emph{Geometry of the complex of curves 
I, Hyperbolicity}, Invent. Math., \textbf{138(1)} (1999), 103--149.

\bibitem[PH]{PH} R. Penner with J. Harer, \emph{Combinatorics of train tracks}, 
Ann. Math. Studies,  \textbf{125}(1992). 

\bibitem[RS]{RS} K. Rafi \& S. Schleimer, \emph{Curve complexes with connected boundary are rigid}, preprint.

\bibitem[Sc1]{Sc1} S. Schleimer, \emph{The end of the curve complex}, preprint.

\bibitem[St]{St} J. Stallings, \emph{On fibering certain 3-manifolds}, Topology 
of 3-manifolds and related
topics, Prentice Hall, 1962, 95--100.



\bibitem[Wi]{Wi} R. Wilder, \emph{Topology of manifolds}, AMS Colloquium Pubs. \textbf(32), 4'th printing,  (1979). 

\bibitem[W]{W} R. Williams, \emph{One dimensional non-wondering sets}, Topology, 
\textbf{6} (1967), 473--487.

\bibitem[T]{T} W. Thurston, \emph{On the geometry and dynamics of diffeomorphisms of surfacecs}, 1976 preprint published in B. AMS, \textbf{19} (1988), 417--431. 






\end{thebibliography}
